\documentclass[oneside,reqno,english]{amsart}
\usepackage{lmodern}
\usepackage[T1]{fontenc}
\usepackage[latin9]{inputenc}
\setcounter{tocdepth}{1}
\usepackage{babel}
\usepackage{varioref}
\usepackage{float}
\usepackage{amstext}
\usepackage{amsthm}
\usepackage{amssymb}
\PassOptionsToPackage{normalem}{ulem}
\usepackage{ulem}
\usepackage[unicode=true,pdfusetitle,
 bookmarks=true,bookmarksnumbered=false,bookmarksopen=false,
 breaklinks=false,pdfborder={0 0 1},backref=false,colorlinks=false]
 {hyperref}
\usepackage{breakurl}

\makeatletter

\floatstyle{ruled}
\newfloat{algorithm}{tbp}{loa}
\providecommand{\algorithmname}{Algorithm}
\floatname{algorithm}{\protect\algorithmname}

\numberwithin{equation}{section}
\numberwithin{figure}{section}
\theoremstyle{plain}
\newtheorem{thm}{\protect\theoremname}[section]
\theoremstyle{plain}
\newtheorem{lyxalgorithm}[thm]{\protect\algorithmname}
\theoremstyle{plain}
\newtheorem{lem}[thm]{\protect\lemmaname}
\theoremstyle{remark}
\newtheorem{rem}[thm]{\protect\remarkname}
\theoremstyle{plain}
\newtheorem{prop}[thm]{\protect\propositionname}
\theoremstyle{definition}
\newtheorem{defn}[thm]{\protect\definitionname}
\theoremstyle{definition}
\newtheorem{example}[thm]{\protect\examplename}
\theoremstyle{plain}
\newtheorem{assumption}[thm]{\protect\assumptionname}
\theoremstyle{remark}
\newtheorem{claim}[thm]{\protect\claimname}

\newcommand{\dom}{\mbox{\rm dom}}

\makeatother

\providecommand{\algorithmname}{Algorithm}
\providecommand{\assumptionname}{Assumption}
\providecommand{\claimname}{Claim}
\providecommand{\definitionname}{Definition}
\providecommand{\examplename}{Example}
\providecommand{\lemmaname}{Lemma}
\providecommand{\propositionname}{Proposition}
\providecommand{\remarkname}{Remark}
\providecommand{\theoremname}{Theorem}

\begin{document}
\title[Subdifferentiable Dykstra splitting]{Subdifferentiable functions and partial data communication in a distributed deterministic asynchronous Dykstra's algorithm}

\subjclass[2010]{68W15, 90C25, 90C30, 65K05}
\begin{abstract}
We described a decentralized distributed deterministic asynchronous
Dykstra's algorithm that allows for time-varying graphs in an earlier
paper. In this paper, we show how to incorporate subdifferentiable
functions into the framework using a step similar to the bundle method.
We point out that our algorithm also allows for partial data communications.
We discuss a standard step for treating the composition of a convex
and linear function.
\end{abstract}

\author{C.H. Jeffrey Pang}

\thanks{C.H.J. Pang acknowledges grant R-146-000-214-112 from the Faculty
of Science, National University of Singapore. }

\curraddr{Department of Mathematics\\ 
National University of Singapore\\ 
Block S17 08-11\\ 
10 Lower Kent Ridge Road\\ 
Singapore 119076 }

\email{matpchj@nus.edu.sg}

\date{\today{}}

\keywords{Distributed optimization, Subdifferentiable functions, Dykstra's
algorithm, time-varying graphs}

\maketitle
\tableofcontents{}

\section{Introduction}

Consider a connected graph $\mathcal{G}=(\mathcal{V},\mathcal{E})$
where a closed convex function $f_{i}:\mathbb{R}^{m}\to\mathbb{R}\cup\{\infty\}$
is defined on each vertex $i\in\mathcal{V}$. A problem of interest
that occurs in problems with data too large to be stored in a single
location is to minimize the sum 
\begin{equation}
\begin{array}{c}
\underset{x\in\mathbb{R}^{m}}{\min}\underset{i\in\mathcal{V}}{\sum}f_{i}(x)\end{array}\label{eq:distrib-primal}
\end{equation}
in a distributed manner so that the communications of data occur only
along the edges of the graph. In our earlier paper \cite{Pang_Dist_Dyk},
we consider the regularized problem 
\begin{equation}
\begin{array}{c}
\underset{x\in\mathbb{R}^{m}}{\min}\underset{i\in\mathcal{V}}{\sum}[f_{i}(x)+\frac{1}{2}\|x-[\bar{\mathbf{{x}}}]_{i}\|^{2}]\end{array}\label{eq:distrib-dyk-primal}
\end{equation}
instead, where $\bar{\mathbf{{x}}}\in[\mathbb{R}^{m}]^{|\mathcal{V}|}$. 

\subsection{\label{subsec:Distrib-algs}Distributed optimization algorithms}

Since this paper builds on \cite{Pang_Dist_Dyk}, we shall give a
brief introduction. Our algorithm is for the case when the edges are
undirected. But we remark that notable papers on the directed case.
A notable paper based on the directed case using the subgradient algorithm
is \cite{EXTRA_Shi_Ling_Wu_Yin}, and surveys are \cite{Nedich_survey}
and \cite{Nedich_talk_2017}. The papers \cite{Nedich_Olshevsky}
and \cite{Nedich_Olshevsky_Shi} further touch on the case of time-varying
graphs. The algorithm in \cite{Notarstefano_gang_Newton_2017} uses
a Newton-Raphson method to design a distributed algorithm for directed
graphs. Naturally, the communication requirements for directed graphs
need to be more stringent that the requirements for undirected graphs. 

From here on, we discuss only algorithms for undirected graphs. A
product space formulation on the ADMM leads to a distributed algorithm
\cite[Chapter 7]{Boyd_Eckstein_ADMM_review}. Such an algorithm is
decentralized and distributed, but is not asynchronous and so can
get slowed down by slow vertices. An approach based on \cite{Eckstein_Combettes_MAPR}
allows for asynchronous operation, but is not decentralized. 

Moving beyond deterministic algorithms, distributed decentralized
asynchronous algorithms were proposed, but many of them involve some
sort of randomization. For example, the work \cite{Iutzeler_Bianchi_Ciblat_Hachem_1st_paper_dist,Bianchi_Hachem_Iutzeler_2nd_paper_dist}
and the generalization \cite{AROCK_Peng_Xu_Yan_Yin} are based on
monotone operator theory (see for example the textbook \cite{BauschkeCombettes11}),
and require the computations in the nodes to follow specific probability
distributions. 

We now look at asynchronous distributed algorithm with deterministic
convergence (rather than probabilistic convergence). Other than subgradient
methods, we mention that the paper \cite{Aytekin_F_Johansson_2016}
is an algorithm for strongly convex problems that is primal in nature,
so can't handle constraint sets as is. The method in \cite{Aybat_Hamedani_2016}
may arguably be considered to have these properties.

\subsection{\label{subsec:Dyk-method}Dykstra's algorithm and the corresponding
distributed algorithm}

Again, we shall be brief with the introduction, and defer to \cite{Pang_Dist_Dyk}
for a more detailed introduction. Dykstra's algorithm was first studied
in \cite{Dykstra83} for projecting a point onto the intersection
of a number of closed convex sets. The convergence proof without the
existence of dual solutions was established in \cite{BD86} and rewritten
in terms of duality in \cite{Gaffke_Mathar}, and is sometimes called
the Boyle-Dykstra theorem. Dykstra's algorithm was independently noted
in \cite{Han88} to be block coordinate minimization on the dual problem,
but their proof depends on the existence of a dual solution. (For
an example of a problem without dual solutions, look at \cite[page 9]{Han88}
where two circles in $\mathbb{R}^{2}$ intersect at only one point.)
We pointed out in \cite{Pang_Dyk_spl} that the Boyle-Dykstra theorem
can be extended to the case of minimization problems of the form $\min_{x}\frac{1}{2}\|x-\bar{x}\|^{2}+\sum_{i=1}^{k}f_{i}(x)$.
For more on the background on Dykstra's algorithm, we refer to \cite{BauschkeCombettes11,BB96_survey,Deustch01,EsRa11}.

Dykstra's algorithm was extended to a distributed algorithm in \cite{Borkar_distrib_dyk},
and they highlight the works \cite{Aybat_Hamedani_2016,LeeNedich2013,Nedic_Ram_Veeravalli_2010,Ozdaglar_Nedich_Parrilo}
on distributed optimization. The work in \cite{Borkar_distrib_dyk}
is vastly different from how Dykstra's algorithm is studied in \cite{BD86}
and \cite{Gaffke_Mathar}.

It turns out that \cite{Notars_asyn_distrib_2015} also makes use
of the same Dykstra's algorithm setting, but they solve with a randomized
dual proximal gradient method. The differences between their setup
and ours is detailed in \cite{Pang_Dist_Dyk}.

In \cite{Pang_Dist_Dyk}, we rewrote \eqref{eq:distrib-dyk-primal}
in a form similar to \eqref{eq:Dyk-primal} (see Remark \ref{rem:partial-comms-change}
for an explanation of the differences) and applied an extended Dykstra's
algorithm. We list down the features of the distributed Dyksyra's
algorithm: 
\begin{enumerate}
\item distributed (with communications occurring only between adjacent agents
$i$ and $j$ connected by an edge), 
\item decentralized (i.e., there is no central node coordinating calculations), 
\item asynchronous (contrast this to synchronous algorithms, where the faster
agents would wait for slower agents before they can perform their
next calculations), 
\item able to allow for time-varying graphs in the sense of \cite{Nedich_Olshevsky,Nedich_Olshevsky_Shi}
(to be robust against failures of communication between two agents),
\item deterministic (i.e., not using any probabilistic methods, like stochastic
gradient methods),
\item able to allow for constrained optimization, where the feasible region
is the intersection of several sets (this largely rules out primal-only
methods), 
\item able to incorporate proximable functions naturally. 
\end{enumerate}
Since Dykstra's algorithm is also dual block coordinate ascent, the
following property is obtained:
\begin{enumerate}
\item [(8)]choosing large number of dual variables to be maximized over
gives a greedier increase of the dual objective value. 
\end{enumerate}
Also, the distributed Dykstra's algorithm does not require the existence
of a dual minimizer provided that the functions $f_{i}(\cdot)$ are
proximable. Moreover, if some of the $f_{i}(\cdot)$ were defined
to be the indicator functions of closed convex sets, then a greedy
step for dual ascent \cite{Pang_DBAP} is possible. For the rest of
this paper, we shall just refer to the algorithm in \cite{Pang_Dist_Dyk}
as the distributed Dykstra's algorithm.

\subsection{Main contribution of this paper}

This paper builds on \cite{Pang_Dist_Dyk}. We now describe the main
contribution of this paper without assuming any prior knowledge of
\cite{Pang_Dist_Dyk}. 

For each node $i\in\mathcal{V}$, recall the function $f_{i}:\mathbb{R}^{m}\to\mathbb{R}$
in \eqref{eq:distrib-dyk-primal}. Let $\mathbf{f}_{i}:[\mathbb{R}^{m}]^{|\mathcal{V}|}\to\bar{\mathbb{R}}$
be defined by $\mathbf{f}_{i}(\mathbf{{x}})=f_{i}(x_{i})$ (i.e.,
$\mathbf{f}_{i}$ depends only on $i$-th variable, where $i\in\mathcal{V}$).
Recall the graph $\mathcal{G}=(\mathcal{V},\mathcal{E})$. Let the
set $\bar{\mathcal{E}}$ be defined to be 
\[
\bar{\mathcal{E}}:=\mathcal{E}\times\{1,\dots,m\}.
\]
For $\mathbf{{x}}\in[\mathbb{R}^{m}]^{|\mathcal{V}|}$, the component
$\mathbf{{x}}_{i}\in\mathbb{R}^{m}$ is straightforward. We let $[\mathbf{{x}}_{i}]_{\bar{k}}\in\mathbb{R}$
be the $\bar{k}$-th component of $\mathbf{{x}}_{i}\in\mathbb{R}^{m}$.
For each $((i,j),\bar{k})\in\bar{\mathcal{E}}$, the hyperplane $H_{((i,j),\bar{k})}\subset[\mathbb{R}^{m}]^{|\mathcal{V}|}$
is defined to be 
\begin{equation}
H_{((i,j),\bar{k})}:=\{\mathbf{{x}}\in[\mathbb{R}^{m}]^{|\mathcal{V}|}:[\mathbf{{x}}_{i}]_{\bar{k}}=[\mathbf{{x}}_{j}]_{\bar{k}}\}.\label{eq:def-halfspaces}
\end{equation}

We can see that the regularized problem \eqref{eq:distrib-dyk-primal}
is equivalent to 
\begin{equation}
\min_{\mathbf{{x}}\in[\mathbb{R}^{m}]^{|\mathcal{V}|}}\frac{1}{2}\|\mathbf{{x}}-\bar{\mathbf{{x}}}\|^{2}+\sum_{((i,j),\bar{k})\in\bar{\mathcal{E}}}\underbrace{\delta_{H_{((i,j),\bar{k})}}(\mathbf{{x}})}_{\mathbf{f}_{((i,j),\bar{k})}(\mathbf{{x}})}+\sum_{i\in\mathcal{V}}\mathbf{{f}}_{i}(\mathbf{{x}}).\label{eq:Dyk-primal}
\end{equation}
We let the functions $\mathbf{f}_{\alpha}:[\mathbb{R}^{m}]^{|\mathcal{V}|}\to\bar{\mathbb{R}}$
be as defined in \eqref{eq:Dyk-primal} for all $\alpha\in\bar{\mathcal{E}}\cup\mathcal{V}$.
The (Fenchel) dual of \eqref{eq:Dyk-primal} is

\begin{equation}
\max_{\mathbf{{z}}_{\alpha}\in[\mathbb{R}^{m}]^{|\mathcal{V}|},\alpha\in\bar{\mathcal{E}}\cup\mathcal{V}}F(\{\mathbf{{z}}_{\alpha}\}_{\alpha\in\bar{\mathcal{E}}\cup\mathcal{V}}),\label{eq:dual-fn}
\end{equation}
where 
\begin{equation}
\begin{array}{c}
F(\{\mathbf{{z}}_{\alpha}\}_{\alpha\in\bar{\mathcal{E}}\cup\mathcal{V}}):=-\frac{1}{2}\bigg\|\bar{\mathbf{{x}}}-\underset{\alpha\in\bar{\mathcal{E}}\cup\mathcal{V}}{\sum}\mathbf{{z}}_{\alpha}\bigg\|^{2}+\frac{1}{2}\|\bar{\mathbf{{x}}}\|^{2}-\underset{\alpha\in\bar{\mathcal{E}}\cup\mathcal{V}}{\sum}\mathbf{f}_{\alpha}^{*}(\mathbf{{z}}_{\alpha}).\end{array}\label{eq:Dykstra-dual-defn}
\end{equation}
To give further insight on the problems \eqref{eq:Dyk-primal}-\eqref{eq:Dykstra-dual-defn},
we note that if $\mathbf{{f}}_{i}\equiv0$, then the problems \eqref{eq:Dyk-primal}
reduces to the averaged consensus algorithm in \cite{Boyd_distrib_averaging,Distrib_averaging_Dimakis_Kar_Moura_Rabbat_Scaglione},
where the primal variable $\mathbf{{x}}\in[\mathbb{R}^{m}]^{|\mathcal{V}|}$
converges to the vector $\mathbf{{x}}^{*}\in[\mathbb{R}^{m}]^{|\mathcal{V}|}$
(where $\mathbf{{x}}^{*}$ is defined so that each $\mathbf{{x}}_{i}^{*}\in\mathbb{R}^{m}$
is the average $\frac{1}{|\mathcal{V}|}\sum_{i\in\mathcal{V}}\bar{\mathbf{{x}}}_{i}$)
at a linear rate dependent on the properties of the graph $(\mathcal{V},\mathcal{E})$. 

In \cite{Pang_Dist_Dyk}, we applied the techniques of \cite{Gaffke_Mathar,Hundal-Deutsch-97}
to prove that a block coordinate optimization applied to \eqref{eq:Dykstra-dual-defn}
leads to the increase of the objective value $F(\cdot)$ in \eqref{eq:Dykstra-dual-defn}
to the maximal value, which is also the objective value of \eqref{eq:Dyk-primal}
since strong duality can also be proven. Further work in \cite{Pang_Dist_Dyk}
shows that the algorithm has properties (1)-(5). 

We now note that block coodinate optimization applied to \eqref{eq:Dykstra-dual-defn}
can be easily carried out only if the functions $\mathbf{f}_{i}(\cdot)$
are proximable. For illustration, we suppose that we only minimize
with respect to $\mathbf{{z}}_{i^{*}}$ for some $i^{*}\in\mathcal{V}$,
but leave all other $\{\mathbf{{z}}_{\alpha}\}_{\alpha\in[\mathcal{V}\cup\mathcal{E}]\backslash\{i^{*}\}}$
fixed. We showed in \cite{Pang_Dist_Dyk} that $\mathbf{{z}}_{i^{*}}$
is sparse, with $[\mathbf{{z}}_{i^{*}}]_{j}=0$ whenever $i^{*}\neq j$
(see Proposition \ref{prop:sparsity}), so employing techniques in
\cite{Pang_Dist_Dyk} (see Proposition \ref{prop:subproblems}) shows
that the primal problem to be solved is 
\[
\mathbf{{x}}_{i^{*}}=\min_{\mathbf{{x}}'_{i^{*}}\in\mathbb{R}^{m}}\frac{1}{2}\|\mathbf{{x}}'_{i^{*}}-[\bar{\mathbf{{x}}}-\sum_{\alpha\in[\mathcal{E}\cup\mathcal{V}]\backslash\{i^{*}\}}\mathbf{{z}}_{i}]_{i^{*}}\|^{2}+f_{i}(\mathbf{{x}}'_{i^{*}}),
\]
and the $[z_{i^{*}}]_{i^{*}}$ is the corresponding dual variable.
This means that $f_{i}(\cdot)$ has to be proximable. 

As it stands, the algorithm in \cite{Pang_Dist_Dyk} does not handle
the case when $f_{i}(\cdot)$ are smooth for all $i\in\mathcal{V}$.
Given an affine minorant of $f_{i^{*}}(\cdot)$, say $\tilde{f}_{i^{*}}(\cdot)$,
the conjugate $\tilde{f}_{i^{*}}^{*}(\cdot)$ satisfies $\tilde{f}_{i^{*}}^{*}(\cdot)\geq f_{i^{*}}^{*}(\cdot)$.
The main contribution of this paper is to show that for the dual function
\eqref{eq:Dykstra-dual-defn}, if the $f_{i}^{*}(\cdot)$ are replaced
by $\tilde{f}_{i}^{*}(\cdot)$ whenever $f_{i}(\cdot)$ is a subdifferentiable
function and $\tilde{f}_{i}(\cdot)$ is defined as an affine minorant
of $f_{i}(\cdot)$, then the minorized dual functions would have the
values ascending and converging to the optimal objective value of
the dual problem \eqref{eq:dual-fn}. This extends the algorithm in
\cite{Pang_Dist_Dyk} to give an algorithm with properties (1)-(8)
and also incorporating subdifferentiable $f_{i}(\cdot)$ naturally.
(A more traditional method of majorizing $f_{i^{*}}^{*}(\cdot)$ through
$f_{i^{*}}^{*}([\mathbf{z}_{i^{*}}]_{i^{*}})+\langle\mathbf{x}_{i},z-[\mathbf{z}_{i^{*}}]_{i^{*}}\rangle+\frac{\sigma}{2}\|z-[\mathbf{z}_{i^{*}}]_{i^{*}}\|^{2}$
would be problematic because a strongly convex modulus $\sigma$ of
$f_{i^{*}}^{*}(\cdot)$ may not even exist, which is the case when
$f_{i^{*}}(\cdot)$ is affine.) As far as we are aware, distributed
algorithms for subdifferentiable functions include methods based on
the subgradient algorithm as mentioned earlier as well as \cite{Wang_Bertsekas_incremental_unpub_2013}.
(Since the problems we treat in this paper are strongly convex, it
would be unfair to bring out the fact that subgradient methods are
slow for problems that are not strongly convex due to the need of
using diminishing stepsizes. But still, our dual approach has other
advantages compared to the subgradient algorithm since not all of
properties (1)-(8) are satisfied by the subgradient algorithm.)

In Section \ref{sec:First-alg}, we first show that this procedure
is sound for the sum of one subdifferentiable function and a regularizing
quadratic with convergence rates compatible with standard first order
methods. In Section \ref{sec:main-alg}, we integrate this algorithm
into our distributed Dykstra's algorithm.

\subsection{Other contributions of this paper}

 In \cite{Pang_Dist_Dyk}, we had used the hyperplanes $H_{(i,j)}:=\{\mathbf{x}\in\mathbf{X}:[\mathbf{x}]_{i}=[\mathbf{x}]_{j}\}$
for all $(i,j)\in\mathcal{E}$ instead of \eqref{eq:def-halfspaces}.
We point out that using $H_{((i,j),k)}$ instead of $H_{(i,j)}$ allows
for part of the data to be communicated at one time step to achieve
convergence to the optimal solution, which in turn means that computation
will not be held back by communications between nodes. See Subsection
\ref{subsec:Partial-comm-prelim} and Example \ref{exa:partial-comms}
for more details. 

Finally, in Subsection \ref{subsec:composition-lin-op}, we point
out that a standard step allows us to reduce matrix operations whenever
the function $f_{i}(\cdot)$ of the form $\tilde{f}_{i}\circ A_{i}$
for some closed convex function $\tilde{f}_{i}(\cdot)$ and linear
map $A_{i}$, although such a step now introduces additional regularizing
functions.

\subsection{Notation}

For much of the paper, we will be looking at functions with domain
either $\mathbb{R}^{m}$ or $[\mathbb{R}^{m}]^{|\mathcal{V}|}$. We
reserve bold letters for functions with domain $[\mathbb{R}^{m}]^{|\mathcal{V}|}$
(for example, \eqref{eq:Dyk-primal}), and we usually use non-bold
letters for functions with domain $\mathbb{R}^{m}$ (for example,
\eqref{eq:small-pblm}). For a vector $\mathbf{{z}}\in[\mathbb{R}^{m}]^{|\mathcal{V}|}$,
$\mathbf{{z}}_{i}\in\mathbb{R}^{m}$ and $[\mathbf{{z}}]_{i}\in\mathbb{R}^{m}$
are both understood to be the $i$-th component of $\mathbf{{z}}$,
where $i\in|\mathcal{V}|$. Furthermore, $[\mathbf{{z}}_{i}]_{\bar{k}}$
and $[[\mathbf{{z}}]_{i}]_{\bar{k}}\in\mathbb{R}$ are both understood
to be the $\bar{k}$-th component of $[\mathbf{{z}}]_{i}$. 

We say that $f(\cdot)$ is proximable if the problem $\arg\min_{x}f(x)+\frac{1}{2}\|x-\bar{x}\|^{2}$
is easy to solve for any $\bar{x}$. For a closed convex set $C$,
the indicator function is denoted by $\delta_{C}(\cdot)$. All other
notation are standard.

\section{\label{sec:First-alg}The algorithm for one function}

In this section, we consider the problem 
\begin{equation}
\begin{array}{c}
\underset{x\in\mathbb{R}^{m}}{\min}f(x)+\frac{1}{2}\|x-\bar{x}\|^{2},\end{array}\label{eq:small-pblm}
\end{equation}
where $f:\mathbb{R}^{m}\to\mathbb{R}$ is a subdifferentiable convex
function such that $\dom(f)=\mathbb{R}^{m}$. We define our first
dual ascent algorithm to solve \eqref{eq:small-pblm} before we show
how to integrate it into the distributed Dykstra's algorithm for solving
\eqref{eq:Dyk-primal} through the increasing the dual objective value
in \eqref{eq:dual-fn}-\eqref{eq:Dykstra-dual-defn}. Consider Algorithm
\vref{alg:basic-dual-ascent}, which is somewhat like the bundle method.

\begin{algorithm}[!h]
\begin{lyxalgorithm}
\label{alg:basic-dual-ascent}In this algorithm, we want to solve
\eqref{eq:small-pblm} 

Let $h_{0}:\mathbb{R}^{m}\to\mathbb{R}$ be an affine function such
that $h_{0}(\cdot)\leq f(\cdot)$ defined by the parameters $(\tilde{x}_{0},\tilde{f}_{0},\tilde{y}_{0})\in\mathbb{R}^{m}\times\mathbb{R}\times\mathbb{R}^{m}$
where for all $w\geq0$, $h_{w}:\mathbb{R}^{m}\to\mathbb{R}$ is defined
through $(\tilde{x}_{w},\tilde{f}_{w},\tilde{y}_{w})\in\mathbb{R}^{m}\times\mathbb{R}\times\mathbb{R}^{m}$
by 
\begin{equation}
h_{w}(x)=\tilde{y}_{w}^{T}(x-\tilde{x}_{w})+\tilde{f}_{w}.\label{eq:def-q}
\end{equation}

Without loss of generality, let $\tilde{x}_{0}$ be the minimizer
to $\min_{x}h_{0}(x)+\frac{1}{2}\|x-\bar{x}\|^{2}$.

01 For $w=0,\dots$ 

02 $\quad$Recall $\tilde{x}_{w}$ is the minimizer to $\min_{x}h_{w}(x)+\frac{1}{2}\|x-\bar{x}\|^{2}$.

03 $\quad$Evaluate $f(\tilde{x}_{w})$ and find a subgradient $\tilde{s}_{w}\in\partial f(\tilde{x}_{w})$. 

04 $\quad$Construct the affine function $\tilde{h}_{w}:\mathbb{R}^{m}\to\mathbb{R}$
to be 
\begin{equation}
\tilde{h}_{w}(x)=\tilde{s}_{w}^{T}(x-\tilde{x}_{w})+f(\tilde{x}_{w}).\label{eq:def-h-tilde-w}
\end{equation}

05 $\quad$Consider 

\begin{equation}
\begin{array}{c}
\underset{x}{\min}[\max\{\tilde{h}_{w},h_{w}\}(x)+\frac{1}{2}\|x-\bar{x}\|^{2}].\end{array}\label{eq:max-quad}
\end{equation}

06 $\quad$Let $\tilde{x}_{w+1}$ be the minimizer of \eqref{eq:max-quad}.

07 $\quad$Let $\tilde{f}_{w+1}=\max\{\tilde{h}_{w},h_{w}\}(\tilde{x}_{w+1})$.

08 $\quad$Let $\tilde{y}_{w+1}$ be $\bar{x}-\tilde{x}_{w+1}$. 

09 $\quad$Define $h_{w+1}(\cdot)$ through $(\tilde{x}_{w+1},\tilde{f}_{w+1},\tilde{y}_{w+1})$
and \eqref{eq:def-q}. 

10 End for 
\end{lyxalgorithm}

\end{algorithm}

We shall prove that each function of the form \eqref{eq:def-q} is
a lower approximation of $f(\cdot)$ in Lemma \ref{lem:h-w-leq-f}.
With a sequence of such lower approximations like in the bundle method,
we can then solve \eqref{eq:small-pblm}. We prove some lemmas before
proving the convergence of Algorithm \ref{alg:basic-dual-ascent}. 
\begin{lem}
\label{lem:h-w-leq-f}In Algorithm \ref{alg:basic-dual-ascent}, the
functions $h_{w}(\cdot)$ are such that $h_{w}(\cdot)\leq f(\cdot)$.
\end{lem}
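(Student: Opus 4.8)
The plan is to prove $h_w(\cdot) \leq f(\cdot)$ by induction on $w$. The base case holds by hypothesis, since $h_0(\cdot)$ is defined to be an affine function with $h_0(\cdot) \leq f(\cdot)$. For the inductive step, I assume $h_w(\cdot) \leq f(\cdot)$ and aim to show $h_{w+1}(\cdot) \leq f(\cdot)$.

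First I would establish that $\tilde h_w(\cdot) \leq f(\cdot)$. This follows directly from convexity of $f$: since $\tilde s_w \in \partial f(\tilde x_w)$, the subgradient inequality gives $f(x) \geq f(\tilde x_w) + \tilde s_w^T(x - \tilde x_w) = \tilde h_w(x)$ for all $x$, using the definition \eqref{eq:def-h-tilde-w}. Combining this with the inductive hypothesis, I get $\max\{\tilde h_w, h_w\}(\cdot) \leq f(\cdot)$, because the pointwise maximum of two minorants of a function is still a minorant.

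The remaining work is to show that $h_{w+1}(\cdot) \leq \max\{\tilde h_w, h_w\}(\cdot)$, after which the previous step closes the argument by transitivity. Here I would unpack how $h_{w+1}$ is constructed in lines 06--09: $\tilde x_{w+1}$ minimizes \eqref{eq:max-quad}, $\tilde f_{w+1} = \max\{\tilde h_w, h_w\}(\tilde x_{w+1})$, and $\tilde y_{w+1} = \bar x - \tilde x_{w+1}$, so that by \eqref{eq:def-q},
\[
h_{w+1}(x) = (\bar x - \tilde x_{w+1})^T(x - \tilde x_{w+1}) + \max\{\tilde h_w, h_w\}(\tilde x_{w+1}).
\]
The key observation is that $h_{w+1}$ is precisely the affine function whose value and gradient agree with the optimality condition for \eqref{eq:max-quad}: the first-order condition at the minimizer $\tilde x_{w+1}$ reads $0 \in \partial\big(\max\{\tilde h_w, h_w\}\big)(\tilde x_{w+1}) + (\tilde x_{w+1} - \bar x)$, so $\bar x - \tilde x_{w+1}$ is a subgradient of the convex function $\max\{\tilde h_w, h_w\}$ at $\tilde x_{w+1}$. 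Since $h_{w+1}$ is the affine function through the point $(\tilde x_{w+1}, \max\{\tilde h_w, h_w\}(\tilde x_{w+1}))$ with slope equal to this subgradient, the subgradient inequality for the convex function $\max\{\tilde h_w, h_w\}$ yields $\max\{\tilde h_w, h_w\}(x) \geq h_{w+1}(x)$ for all $x$, i.e. $h_{w+1}(\cdot) \leq \max\{\tilde h_w, h_w\}(\cdot)$.

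The main obstacle is verifying that $\bar x - \tilde x_{w+1}$ really is a subgradient of $\max\{\tilde h_w, h_w\}$ at $\tilde x_{w+1}$; this requires carefully writing the optimality condition for the nonsmooth problem \eqref{eq:max-quad} and reading off the slope of the supporting affine function. Once this identification is made, the two minorization facts chain together as $h_{w+1}(\cdot) \leq \max\{\tilde h_w, h_w\}(\cdot) \leq f(\cdot)$, completing the induction.
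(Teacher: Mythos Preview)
Your proposal is correct and follows essentially the same approach as the paper: induction on $w$, with the key step being that the optimality condition for \eqref{eq:max-quad} gives $\bar{x}-\tilde{x}_{w+1}\in\partial[\max\{\tilde{h}_w,h_w\}](\tilde{x}_{w+1})$, so $h_{w+1}$ is a supporting affine minorant of $\max\{\tilde{h}_w,h_w\}$, which in turn minorizes $f$ by the inductive hypothesis and the subgradient inequality for $\tilde{h}_w$.
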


\begin{proof}
We prove our result by induction. Note that $h_{0}(\cdot)$ was defined
so that $h_{0}(\cdot)\leq f(\cdot)$. It is also clear from the definition
of $\tilde{h}_{w}(\cdot)$ in \eqref{eq:def-h-tilde-w} that $\tilde{h}_{w}(\cdot)\leq f(\cdot)$
for all $w\geq0$. Suppose that $h_{w}(\cdot)\leq f(\cdot)$. We now
show that $h_{w+1}(\cdot)\leq f(\cdot)$. We have 
\begin{equation}
\max\{\tilde{h}_{w},h_{w}\}(\cdot)\leq f(\cdot).\label{eq:max-leq-f}
\end{equation}
The functions $\tilde{h}_{w}(\cdot)$ and $h_{w}(\cdot)$ are convex,
so $\max\{\tilde{h}_{w},h_{w}\}(\cdot)$ is convex. Since the minimum
of $\max\{\tilde{h}_{w},h_{w}\}(\cdot)+\frac{1}{2}\|\cdot-\bar{x}\|^{2}$
is attained at $\tilde{x}_{w+1}$, it follows that $0\in\partial[\max\{\tilde{h}_{w},h_{w}\}](\tilde{x}_{w+1})+\tilde{x}_{w+1}-\bar{x}$,
or that $\bar{x}-\tilde{x}_{w+1}\in\partial[\max\{\tilde{h}_{w},h_{w}\}](\tilde{x}_{w+1})$.
The construction of $h_{w+1}(\cdot)$ implies that $h_{w+1}(\tilde{x}_{w+1})=\max\{\tilde{h}_{w},h_{w}\}(\tilde{x}_{w+1})$
and $h_{w+1}(\cdot)\leq\max\{\tilde{h}_{w},h_{w}\}(\cdot)$. Together
with \eqref{eq:max-leq-f}, this implies $h_{w+1}(\cdot)\leq f(\cdot)$,
which completes the proof. 
\end{proof}
Let 
\begin{equation}
\bar{\alpha}_{w}:=f(\tilde{x}_{w})-h_{w}(\tilde{x}_{w}).\label{eq:bar-alpha-w}
\end{equation}
Let the minimizer of \eqref{eq:small-pblm} be $x^{*}$. We have 
\begin{eqnarray}
\begin{array}{c}
h_{w}(\tilde{x}_{w})+\frac{1}{2}\|\tilde{x}_{w}-\bar{x}\|^{2}\end{array} & \overset{\scriptsize{\mbox{Lem }\ref{lem:h-w-leq-f}\mbox{, Alg \ref{alg:basic-dual-ascent} line 2}}}{\leq} & \begin{array}{c}
f(x^{*})+\frac{1}{2}\|x^{*}-\bar{x}\|^{2}\end{array}\label{eq:value-chain}\\
 & \overset{\scriptsize{x^{*}\mbox{ solves \eqref{eq:small-pblm}}}}{\leq} & \begin{array}{c}
f(\tilde{x}_{w})+\frac{1}{2}\|\tilde{x}_{w}-\bar{x}\|^{2}.\end{array}\nonumber 
\end{eqnarray}
Let the real number $\alpha_{w}$ be 
\begin{equation}
\begin{array}{c}
\alpha_{w}:=\left[f(x^{*})+\frac{1}{2}\|x^{*}-\bar{x}\|^{2}\right]-\left[h_{w}(\tilde{x}_{w})+\frac{1}{2}\|\tilde{x}_{w}-\bar{x}\|^{2}\right].\end{array}\label{eq:alpha-w}
\end{equation}
It is clear to see that \eqref{eq:value-chain} translates to $0\leq\alpha_{w}\leq\bar{\alpha}_{w}$. 
\begin{lem}
\label{lem:alpha-recurrs}Recall the definitions of $\alpha_{w}$
and $\bar{\alpha}_{w}$ in \eqref{eq:bar-alpha-w} and \eqref{eq:alpha-w}.

\begin{enumerate}
\item We have $\alpha_{w+1}\leq\alpha_{w}-\frac{1}{2}t^{2}$, where $\frac{1}{2}t^{2}+t\|\tilde{s}_{w}+\tilde{x}_{w}-\bar{x}\|=\bar{\alpha}_{w}$.
\item Next, 
\begin{equation}
\begin{array}{c}
\frac{1}{2\|\tilde{s}_{w}+\tilde{x}_{w}-\bar{x}\|^{2}}\alpha_{w+1}^{2}+\alpha_{w+1}\leq\alpha_{w}.\end{array}\label{eq:target-quad}
\end{equation}
\end{enumerate}
\end{lem}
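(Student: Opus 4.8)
The plan is to establish part (1) first and then deduce part (2) from it by elementary manipulation. Throughout I would write $u:=\tilde{s}_w+\tilde{x}_w-\bar{x}$ for the quantity whose norm appears in the statement, and abbreviate the two optimal subproblem values $m_w:=h_w(\tilde{x}_w)+\frac{1}{2}\|\tilde{x}_w-\bar{x}\|^2$ and $m_{w+1}:=\min_x[\max\{\tilde{h}_w,h_w\}(x)+\frac{1}{2}\|x-\bar{x}\|^2]$. The first observation is that $\alpha_w-\alpha_{w+1}=m_{w+1}-m_w$: indeed $\alpha_w$ and $\alpha_{w+1}$ differ from $-m_w$ and $-m_{w+1}$ only by the common constant $f(x^*)+\frac{1}{2}\|x^*-\bar{x}\|^2$, and by lines 6--9 of Algorithm \ref{alg:basic-dual-ascent} the value $h_{w+1}(\tilde{x}_{w+1})+\frac{1}{2}\|\tilde{x}_{w+1}-\bar{x}\|^2$ equals $m_{w+1}$. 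So part (1) is equivalent to the one-step gain bound $m_{w+1}-m_w\ge\frac{1}{2}t^2$.

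To produce this bound I would underestimate the piecewise-affine model by a single affine function before minimizing. For every $\lambda\in[0,1]$ the pointwise maximum dominates the convex combination, $\max\{\tilde{h}_w,h_w\}(\cdot)\ge\lambda\tilde{h}_w(\cdot)+(1-\lambda)h_w(\cdot)$, whence $m_{w+1}\ge\min_x[\lambda\tilde{h}_w(x)+(1-\lambda)h_w(x)+\frac{1}{2}\|x-\bar{x}\|^2]=:\mu_\lambda$. Since the minimand is a simple quadratic, $\mu_\lambda$ has a closed form. The simplifications I would exploit are that $\tilde{y}_w=\bar{x}-\tilde{x}_w$ (so $\tilde{x}_w$ is the minimizer in line 2), that $h_w(\tilde{x}_w)=\tilde{f}_w=f(\tilde{x}_w)-\bar{\alpha}_w$ while $\tilde{h}_w(\tilde{x}_w)=f(\tilde{x}_w)$, and above all that the affine slope satisfies $\lambda\tilde{s}_w+(1-\lambda)\tilde{y}_w=\tilde{y}_w+\lambda u$. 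Carrying out the algebra, the $\|\tilde{y}_w\|^2$ and cross terms cancel, leaving the clean identity $\mu_\lambda-m_w=\lambda\bar{\alpha}_w-\frac{1}{2}\lambda^2\|u\|^2$, valid for all $\lambda\in[0,1]$.

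It then remains to pick $\lambda$ optimally and match it to the $t$ defined by $\frac{1}{2}t^2+t\|u\|=\bar{\alpha}_w$. Substituting $\lambda=t/\|u\|$ and using the defining equation for $t$ gives $\mu_\lambda-m_w=\frac{1}{2}t^2+\frac{t^3}{2\|u\|}\ge\frac{1}{2}t^2$, which settles the case $t\le\|u\|$ where this $\lambda$ is feasible. When $t>\|u\|$ I would instead take $\lambda=1$, giving $\mu_1-m_w=\bar{\alpha}_w-\frac{1}{2}\|u\|^2=\frac{1}{2}t^2+t\|u\|-\frac{1}{2}\|u\|^2\ge\frac{1}{2}t^2$, since $t>\|u\|$ forces $t\|u\|\ge\frac{1}{2}\|u\|^2$. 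In either case $m_{w+1}-m_w\ge\mu_\lambda-m_w\ge\frac{1}{2}t^2$, establishing part (1); the degenerate case $\|u\|=0$ is covered directly by $\lambda=1$.

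Finally, part (2) follows from part (1) together with the already-established inequality $\alpha_w\le\bar{\alpha}_w$. From $\frac{1}{2}t^2\le\alpha_w-\alpha_{w+1}$ we get $t\|u\|=\bar{\alpha}_w-\frac{1}{2}t^2\ge\bar{\alpha}_w-(\alpha_w-\alpha_{w+1})\ge\alpha_{w+1}$, using $\bar{\alpha}_w\ge\alpha_w$ and $\alpha_{w+1}\ge0$; squaring the nonnegative inequality $t\|u\|\ge\alpha_{w+1}$ yields $\frac{\alpha_{w+1}^2}{2\|u\|^2}\le\frac{1}{2}t^2\le\alpha_w-\alpha_{w+1}$, which rearranges exactly to \eqref{eq:target-quad}. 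I expect the closed-form evaluation of $\mu_\lambda$, and in particular spotting the slope identity $\lambda\tilde{s}_w+(1-\lambda)\tilde{y}_w=\tilde{y}_w+\lambda u$ that makes the cross terms vanish, to be the main obstacle; once the identity $\mu_\lambda-m_w=\lambda\bar{\alpha}_w-\frac{1}{2}\lambda^2\|u\|^2$ is in hand, the optimization over $\lambda$ and the passage to part (2) are routine.
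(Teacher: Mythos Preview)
Your proof is correct but proceeds by a genuinely different route than the paper's. The paper linearizes $\tilde h_w(\cdot)+\tfrac12\|\cdot-\bar x\|^2$ at $\tilde x_w$ to obtain an affine function $l_{(\tilde x_w,\tilde s_w)}$, then minimizes $h_w^+(x):=\max\{h_w(x)+\tfrac12\|x-\bar x\|^2,\,l_{(\tilde x_w,\tilde s_w)}(x)\}$ by arguing geometrically that its minimizer lies on the ray $\tilde x_w-t\,u/\|u\|$; equating the two pieces at that point produces the defining equation $\tfrac12 t^2+t\|u\|=\bar\alpha_w$ and the value $m_w+\tfrac12 t^2$. You instead lower-bound $\max\{\tilde h_w,h_w\}$ by the convex combination $\lambda\tilde h_w+(1-\lambda)h_w$, minimize the resulting single quadratic in closed form to obtain the clean identity $\mu_\lambda-m_w=\lambda\bar\alpha_w-\tfrac12\lambda^2\|u\|^2$, and then choose $\lambda$ to hit the target $\tfrac12 t^2$. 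Your approach is the standard bundle-method aggregation estimate: it avoids the ray argument entirely, handles the degenerate case $u=0$ uniformly via $\lambda=1$, and in fact yields a slightly sharper gain ($\tfrac12 t^2+\tfrac{t^3}{2\|u\|}$ when $t\le\|u\|$) than the paper records. For part~(2) the paper introduces an auxiliary root $t_2$ of $\tfrac12 t_2^2+t_2\|u\|=\alpha_w$ and substitutes; your derivation from $t\|u\|=\bar\alpha_w-\tfrac12 t^2\ge\alpha_{w+1}$ is more direct but equivalent. Both arguments are valid; yours is arguably more elementary, while the paper's makes the geometric meaning of $t$ (a displacement along the aggregate subgradient direction) explicit.
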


\begin{proof}
Since the function $x\mapsto\tilde{h}_{w}(x)+\frac{1}{2}\|x-\bar{x}\|^{2}$
is convex, it is bounded from below by its linearization at $\tilde{x}_{w}$
using $\tilde{s}_{w}\in\partial f(\tilde{x}_{w})$ via \eqref{eq:def-h-tilde-w}.
In other words, for all $x\in\mathbb{R}^{m}$, we have 
\begin{equation}
\begin{array}{c}
\tilde{h}_{w}(x)+\frac{1}{2}\|x-\bar{x}\|^{2}\overset{\eqref{eq:def-h-tilde-w}}{\geq}\underbrace{(\tilde{s}_{w}+\tilde{x}_{w}-\bar{x})^{T}(x-\tilde{x}_{w})+f(\tilde{x}_{w})+\frac{1}{2}\|\tilde{x}_{w}-\bar{x}\|^{2}}_{l_{(\tilde{x}_{w},\tilde{s}_{w})}(x)},\end{array}\label{eq:q-hat-lower-bdd}
\end{equation}
where $l_{(\tilde{x}_{w},\tilde{s}_{w})}(\cdot)$ as defined above
is the affine function derived from taking a subgradient of $\tilde{h}_{w}(\cdot)+\frac{1}{2}\|\cdot-\bar{x}\|^{2}$
at $\tilde{x}_{w}$. Consider the problem 
\begin{equation}
\begin{array}{c}
\underset{x}{\min}\underbrace{\max\left\{ h_{w}(x)+\frac{1}{2}\|x-\bar{x}\|^{2},l_{(\tilde{x}_{w},\tilde{s}_{w})}(x)\right\} }_{h_{w}^{+}(x)},\end{array}\label{eq:def-h-plus-w}
\end{equation}
where $h_{w}^{+}(\cdot)$ is as defined above. 

We first look at the case when $\tilde{s}_{w}+\tilde{x}_{w}-\bar{x}=0$.
We have $h_{w}(\tilde{x}_{w})\leq f(\tilde{x}_{w})$ by Lemma \ref{lem:h-w-leq-f}.
So 
\begin{align*}
 & f(\tilde{x}_{w})+\frac{1}{2}\|\tilde{x}_{w}-\bar{x}\|^{2}\overset{\tilde{s}_{w}+\tilde{x}_{w}-\bar{x}=0}{=}\min_{x}l_{(\tilde{x}_{w},\tilde{s}_{w})}(x)\overset{\eqref{eq:def-h-plus-w}}{\leq}\min_{x}h_{w}^{+}(x)\\
\leq & h^{+}(\tilde{x}_{w})\overset{\scriptsize{\text{Lemma \ref{lem:h-w-leq-f}}}}{\leq}f(\tilde{x}_{w})+\frac{1}{2}\|\tilde{x}_{w}-\bar{x}\|^{2}.
\end{align*}
We then have $h_{w}(\tilde{x}_{w})=f(\tilde{x}_{w})$, or $\alpha_{w}=0$.
Also, 
\[
\begin{array}{c}
0=\tilde{s}_{w}+\tilde{x}_{w}-\bar{x}\in\partial f(\tilde{x}_{w})+\partial(\frac{1}{2}\|\cdot-\bar{x}\|^{2})(\tilde{x}_{w}),\end{array}
\]
so $\tilde{x}_{w}=x^{*}$. The remaining techniques in this proof
shows that $\alpha_{w'}=0$ for all $w'\geq w$, which implies the
claims in this lemma. Thus, we assume $\tilde{s}_{w}+\tilde{x}_{w}-\bar{x}\neq0$.

Since $l_{(\tilde{x}_{w},\tilde{s}_{w})}(\cdot)$ is affine and $h_{w}(\cdot)+\frac{1}{2}\|\cdot-\bar{x}\|^{2}$
is a quadratic with minimizer $\tilde{x}_{w}$ and Hessian $I$, some
elementary calculations will show that the minimizer of \eqref{eq:def-h-plus-w}
is of the form $\tilde{x}_{w}-t\frac{\tilde{s}_{w}+\tilde{x}_{w}-\bar{x}}{\|\tilde{s}_{w}+\tilde{x}_{w}-\bar{x}\|}$
for some $t\geq0$. Let $\tilde{d}_{w}:=\frac{\tilde{s}_{w}+\tilde{x}_{w}-\bar{x}}{\|\tilde{s}_{w}+\tilde{x}_{w}-\bar{x}\|}$,
and let this minimizer be $\tilde{x}_{w}-t\tilde{d}_{w}$. We can
see that $t>0$, because if $t=0$, then $\tilde{x}_{w}-t\tilde{d}_{w}=\tilde{x}_{w}$,
and $\tilde{x}_{w}$ would once again be the minimizer of $f(\cdot)+\frac{1}{2}\|\cdot-\bar{x}\|^{2}$.
With $t>0$, the function values in \eqref{eq:def-h-plus-w} are equal,
which gives
\begin{equation}
\begin{array}{c}
h_{w}(\tilde{x}_{w}-t\tilde{d}_{w})+\frac{1}{2}\|(\tilde{x}_{w}-t\tilde{d}_{w})-\bar{x}\|^{2}\overset{\scriptsize{\mbox{Alg \ref{alg:basic-dual-ascent}, line 2}}}{=}h_{w}(\tilde{x}_{w})+\frac{1}{2}\|\tilde{x}_{w}-\bar{x}\|^{2}+\frac{1}{2}t^{2},\end{array}\label{eq:t-square-term}
\end{equation}
and 
\[
\begin{array}{c}
l_{(\tilde{x}_{w},\tilde{s})}(\tilde{x}_{w}-t\tilde{d}_{w})\overset{\eqref{eq:bar-alpha-w},\eqref{eq:q-hat-lower-bdd}}{=}h_{w}(\tilde{x}_{w})+\frac{1}{2}\|\tilde{x}_{w}-\bar{x}\|^{2}+\bar{\alpha}_{w}-t\|\tilde{s}_{w}+\tilde{x}_{w}-\bar{x}\|.\end{array}
\]
Equating the last two formulas gives 
\begin{eqnarray}
 & \begin{array}{c}
\frac{1}{2}t^{2}+t\|\tilde{s}_{w}+\tilde{x}_{w}-\bar{x}\|=\bar{\alpha}_{w}.\end{array}\label{eq:end-of-bar-alpha}
\end{eqnarray}
 Next, we have
\begin{eqnarray}
\begin{array}{c}
h_{w+1}(\tilde{x}_{w+1})+\frac{1}{2}\|\tilde{x}_{w+1}-\bar{x}\|^{2}\end{array} & \overset{\scriptsize{\mbox{Alg \ref{alg:basic-dual-ascent} line 9}}}{=} & \begin{array}{c}
\max\{h_{w},\tilde{h}_{w}\}(\tilde{x}_{w+1})+\frac{1}{2}\|\tilde{x}_{w+1}-\bar{x}\|^{2}\end{array}\nonumber \\
 & \overset{\eqref{eq:q-hat-lower-bdd},\eqref{eq:def-h-plus-w}}{\geq} & \begin{array}{c}
h_{w}^{+}(\tilde{x}_{w+1})\end{array}\label{eq:alpha-w-smaller}\\
 & \geq & \begin{array}{c}
h_{w}^{+}(\tilde{x}_{w}-t\tilde{d}_{w})\end{array}\nonumber \\
 & \overset{\eqref{eq:def-h-plus-w}}{=} & \begin{array}{c}
h_{w}(\tilde{x}_{w}-t\tilde{d}_{w})+\frac{1}{2}\|\tilde{x}_{w}-t\tilde{d}_{w}-\bar{x}\|^{2}\end{array}\nonumber \\
 & \overset{\eqref{eq:t-square-term}}{=} & \begin{array}{c}
h_{w}(\tilde{x}_{w})+\frac{1}{2}\|\tilde{x}_{w}-\bar{x}\|^{2}+\frac{1}{2}t^{2}.\end{array}\nonumber 
\end{eqnarray}
 The formulas \eqref{eq:end-of-bar-alpha} and \eqref{eq:alpha-w-smaller}
imply the first part of our lemma. Next, let $t_{2}$ be the positive
root of 
\begin{equation}
\begin{array}{c}
\frac{1}{2}t_{2}^{2}+t_{2}\|\tilde{s}_{w}+\tilde{x}_{w}-\bar{x}\|=\alpha_{w}.\end{array}\label{eq:quad-form}
\end{equation}
Since $\alpha_{w}\leq\bar{\alpha}_{w}$, we have $t_{2}\leq t$. Recalling
the definition of $\alpha_{w}$ in \eqref{eq:alpha-w}, we have 
\[
\begin{array}{c}
\alpha_{w+1}\overset{\eqref{eq:alpha-w},\eqref{eq:alpha-w-smaller}}{\leq}\alpha_{w}-\frac{1}{2}t^{2}\leq\alpha_{w}-\frac{1}{2}t_{2}^{2}\overset{\eqref{eq:quad-form}}{=}t_{2}\|\tilde{s}_{w}+\tilde{x}_{w}-\bar{x}\|,\end{array}
\]
or $\frac{\alpha_{w+1}}{\|\tilde{s}_{w}+\tilde{x}_{w}-\bar{x}\|}\leq t_{2}$.
Substituting this into \eqref{eq:quad-form} gives \eqref{eq:target-quad}
as needed.
\end{proof}
\begin{rem}
It is clear that $h_{0}(\cdot)$ in Algorithm \ref{alg:basic-dual-ascent}
can be defined as an affine function based on the evaluation of $f(x)$
and a subgradient in $\partial f(x)$ for some point $x$. In line
9, instead of $h_{w+1}(\cdot)$ defined there, one can use the maximum
of a number of affine functions like in the bundle method. We shall
only limit to the easy case of using one affine function to model
$h_{w+1}(\cdot)$ for pedagogical reasons.
\end{rem}

We need the following result proved in \cite{Beck_Tetruashvili_2013}
and \cite{Beck_alt_min_SIOPT_2015}.
\begin{lem}
\label{lem:seq-conv-rate}(Sequence convergence rate) Let $\alpha>0$.
Suppose the sequence of nonnegative numbers $\{a_{k}\}_{k=0}^{\infty}$
is such that 
\[
a_{k}\geq a_{k+1}+\alpha a_{k+1}^{2}\mbox{ for all }k\in\{1,2,\dots\}.
\]
\end{lem}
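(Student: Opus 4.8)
The plan is to prove the (omitted) conclusion, namely the $O(1/k)$ decay bound $a_k\le c/k$ for an explicit constant $c>0$ depending on $\alpha$ and $a_1$, by a direct induction rather than by reproducing the cited references. First I would record two elementary facts. Writing $g(t):=t+\alpha t^2$, the hypothesis reads $g(a_{k+1})\le a_k$ for $k\ge 1$, and since $g$ is a strictly increasing bijection of $[0,\infty)$ onto itself, this is equivalent to $a_{k+1}\le g^{-1}(a_k)$. In particular $a_{k+1}\le a_k$, so $\{a_k\}$ is nonincreasing; note also that no positivity assumption on the $a_k$ is needed, because the argument never divides by $a_k$.

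Next I would set $c:=\max\{a_1,2/\alpha\}$ and prove $a_k\le c/k$ for all $k\ge1$ by induction on $k$. The base case $k=1$ is immediate since $c\ge a_1$. For the inductive step, assume $a_k\le c/k$. Then $g(a_{k+1})\le a_k\le c/k$, so by monotonicity of $g$ it suffices to verify $g(c/(k+1))\ge c/k$, since this yields $a_{k+1}\le g^{-1}(c/k)\le c/(k+1)$. Expanding $g$, the inequality $g(c/(k+1))\ge c/k$ reduces, after clearing denominators, to the single scalar condition $\alpha c\ge 1+1/k$, which holds for every $k\ge1$ precisely because $\alpha c\ge 2$ by the choice of $c$. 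This closes the induction and gives $a_k\le \max\{a_1,2/\alpha\}/k$, i.e. the claimed $O(1/k)$ rate.

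The point that needs care — and the reason one cannot simply quote the classical descent lemma — is that the quadratic term on the right of the recursion involves the new iterate $a_{k+1}^2$ rather than $a_k^2$. For the $a_k^2$ version one divides by $a_ka_{k+1}$ and telescopes $1/a_{k+1}-1/a_k\ge\alpha$ directly; here the same manipulation only gives $1/a_{k+1}-1/a_k\ge\alpha\,a_{k+1}/a_k$, whose ratio factor obstructs clean telescoping. The inversion-of-$g$ device above sidesteps this, at the cost of pinning down the admissible constant through the constraint $\alpha c\ge 2$ and of letting $c$ absorb the first term $a_1$ (which the recursion does not itself bound). An alternative I would fall back on, matching the cited references, is to set $b_k:=1/a_k$ and split each step into the regimes $a_{k+1}\ge\frac12 a_k$ and $a_{k+1}<\frac12 a_k$: the first gives $b_{k+1}-b_k\ge\alpha/2$ while the second forces $b_{k+1}\ge 2b_k$, so that $b_k$ grows at least linearly in $k$; this reproduces the same rate and yields the sharp constants derived in \cite{Beck_Tetruashvili_2013,Beck_alt_min_SIOPT_2015}.
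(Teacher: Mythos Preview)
The paper does not actually prove this lemma; it simply quotes the two bounds from \cite{Beck_Tetruashvili_2013} and \cite{Beck_alt_min_SIOPT_2015} without argument. Your induction via the inverse of $g(t)=t+\alpha t^2$ is correct: the reduction to $\alpha c\ge 1+1/k$ is clean, and the choice $c=\max\{a_1,2/\alpha\}$ makes $\alpha c\ge 2$, which closes the induction for all $k\ge 1$. The resulting bound $a_k\le \max\{a_1,2/\alpha\}/k$ is not literally either of the two statements the paper records, but it sits between them (constant $2/\alpha$ versus $1.5/\alpha$ under the extra hypotheses of part (1), and versus $4/\alpha$ in part (2)), and it is more than enough for the only application, Theorem~\ref{thm:basic-conv-rate}(1), which uses nothing beyond the qualitative $O(1/w)$ rate.

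Your remark about why the obvious telescoping $1/a_{k+1}-1/a_k\ge\alpha$ fails here is accurate and worth keeping: the quadratic term is in $a_{k+1}$, not $a_k$, so one only gets $1/a_{k+1}-1/a_k\ge\alpha\,a_{k+1}/a_k$. The two-regime fallback you sketch (split on $a_{k+1}\gtrless\tfrac12 a_k$) is indeed the device in the cited references and recovers their constants; your monotone-inversion argument is a slicker alternative that trades the sharper constant $1.5/\alpha$ for not needing the auxiliary hypotheses on $a_1,a_2$. One small quibble: your parenthetical that ``no positivity assumption on the $a_k$ is needed'' is moot, since the lemma already assumes $a_k\ge 0$ and you do use $c>0$ (guaranteed by $c\ge 2/\alpha$) when clearing denominators.
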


\begin{enumerate}
\item \cite[Lemma 6.2]{Beck_Tetruashvili_2013} If furthermore, $\begin{array}{c}
a_{1}\leq\frac{1.5}{\alpha}\mbox{ and }a_{2}\leq\frac{1.5}{2\alpha}\end{array}$, then 
\[
\begin{array}{c}
a_{k}\leq\frac{1.5}{\alpha k}\mbox{ for all }k\in\{1,2,\dots\}.\end{array}
\]
\item \cite[Lemma 3.8]{Beck_alt_min_SIOPT_2015} For any $k\geq2$, 
\[
\begin{array}{c}
a_{k}\leq\max\left\{ \left(\frac{1}{2}\right)^{(k-1)/2}a_{0},\frac{4}{\alpha(k-1)}\right\} .\end{array}
\]
In addition, for any $\epsilon>0$, if 
\[
\begin{array}{c}
\begin{array}{c}
k\geq\max\left\{ \frac{2}{\ln(2)}[\ln(a_{0})+\ln(1/\epsilon)],\frac{4}{\alpha\epsilon}\right\} +1,\end{array}\end{array}
\]
then $a_{k}\leq\epsilon$. 
\end{enumerate}
Theorem \ref{thm:basic-conv-rate} shows that Algorithm \ref{alg:basic-dual-ascent}
has convergence rates consistent with standard first order methods.
\begin{thm}
\label{thm:basic-conv-rate}(Convergence rate) Suppose Algorithm \ref{alg:basic-dual-ascent}
is used to solve \eqref{eq:small-pblm}, and $\dom(f)=\mathbb{R}^{m}$.

\begin{enumerate}
\item There is a $O(1/w)$ convergence rate.
\item If in addition, $\nabla f(\cdot)$ is Lipschitz with constant $L_{1}$,
then there is a linear rate of convergence. 
\end{enumerate}
\end{thm}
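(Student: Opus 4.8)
The plan is to identify $\alpha_w$ from \eqref{eq:alpha-w} as the quantity converging to zero and to feed the recursion \eqref{eq:target-quad} of Lemma \ref{lem:alpha-recurrs} into the abstract rate estimate Lemma \ref{lem:seq-conv-rate}. The obstruction is that the coefficient $\frac{1}{2\|\tilde{s}_w+\tilde{x}_w-\bar{x}\|^2}$ in \eqref{eq:target-quad} varies with $w$, while Lemma \ref{lem:seq-conv-rate} requires a \emph{fixed} constant. So the first and main task is to bound $\|\tilde{s}_w+\tilde{x}_w-\bar{x}\|$ uniformly in $w$ (for part (1); part (2) will not need this). If $\tilde{s}_w+\tilde{x}_w-\bar{x}=0$ for some $w$, the degenerate case in the proof of Lemma \ref{lem:alpha-recurrs} shows $\alpha_{w'}=0$ for all $w'\ge w$ and we are done, so I may assume the coefficient is well defined throughout.

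For the boundedness, I would first note from \eqref{eq:def-q} that $\tilde{x}_w=\bar{x}-\tilde{y}_w$ (it minimizes the affine-plus-quadratic in line 2), so $\|\tilde{x}_w-\bar{x}\|=\|\tilde{y}_w\|$ and $h_w(\tilde{x}_w)=\tilde{f}_w$. The chain \eqref{eq:alpha-w-smaller} shows $\beta_w:=h_w(\tilde{x}_w)+\frac12\|\tilde{x}_w-\bar{x}\|^2$ is nondecreasing, and by Lemma \ref{lem:h-w-leq-f} we have $\beta_0=\min_x[h_0(x)+\frac12\|x-\bar{x}\|^2]\le h_0(\bar{x})\le f(\bar{x})$, so $\beta_w\ge\beta_0$. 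Next I would use that $h_w$ is an affine minorant of $f$ with slope $\tilde{y}_w$: comparing the two elementary bounds on the conjugate gives $\langle\tilde{y}_w,\bar{x}\rangle-f(\bar{x})\le f^*(\tilde{y}_w)\le\langle\tilde{y}_w,\tilde{x}_w\rangle-\tilde{f}_w$. Substituting $\tilde{x}_w=\bar{x}-\tilde{y}_w$ and $\tilde{f}_w=\beta_w-\frac12\|\tilde{y}_w\|^2$ collapses this to $\frac12\|\tilde{y}_w\|^2\le f(\bar{x})-\beta_0$. Hence $\tilde{x}_w$ stays in a fixed ball about $\bar{x}$; since $\dom(f)=\mathbb{R}^m$, $f$ is locally Lipschitz there, so the subgradients $\tilde{s}_w$ are uniformly bounded, and therefore $\|\tilde{s}_w+\tilde{x}_w-\bar{x}\|\le M$ for some constant $M$.

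For part (1), with $M$ in hand, \eqref{eq:target-quad} gives $\alpha_w\ge\alpha_{w+1}+\frac{1}{2M^2}\alpha_{w+1}^2$, which is exactly the hypothesis of Lemma \ref{lem:seq-conv-rate} with $\alpha=\frac{1}{2M^2}$. Its second bound yields $\alpha_w\le\max\{(1/2)^{(w-1)/2}\alpha_0,\,8M^2/(w-1)\}$; the geometric term is $o(1/w)$, so $\alpha_w=O(1/w)$. For part (2) I would instead exploit the per-step decrease directly. Writing $\phi:=f+\frac12\|\cdot-\bar{x}\|^2$, one checks from \eqref{eq:bar-alpha-w} and \eqref{eq:alpha-w} that $\bar{\alpha}_w-\alpha_w=\phi(\tilde{x}_w)-\phi(x^*)$ is the primal suboptimality, and that $\tilde{s}_w+\tilde{x}_w-\bar{x}=\nabla\phi(\tilde{x}_w)$. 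Since $\nabla f$ is $L_1$-Lipschitz, $\phi$ is $L$-smooth with $L=L_1+1$, so the standard gradient bound gives $\|\nabla\phi(\tilde{x}_w)\|^2\le 2L(\phi(\tilde{x}_w)-\phi(x^*))=2L(\bar{\alpha}_w-\alpha_w)\le 2L\bar{\alpha}_w$. Meanwhile, solving \eqref{eq:end-of-bar-alpha} for the decrease and rationalizing the numerator gives, with $u=\bar{\alpha}_w$ and $v=\|\nabla\phi(\tilde{x}_w)\|^2$, the closed form $\tfrac12 t^2=u^2/\big(u+v+\sqrt{v^2+2uv}\big)\ge u^2/\big(2(u+v)\big)$. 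Combining this with Lemma \ref{lem:alpha-recurrs}(1) and the smoothness bound $v\le 2L\bar{\alpha}_w$ yields $\alpha_w-\alpha_{w+1}\ge\tfrac12 t^2\ge\frac{\bar{\alpha}_w}{2(1+2L)}\ge\frac{\alpha_w}{2(1+2L)}$, whence $\alpha_{w+1}\le\big(1-\frac{1}{2(1+2L)}\big)\alpha_w$, a linear rate.

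The hard part is the uniform boundedness in the second paragraph; the conjugate-function estimate that controls $\|\tilde{x}_w-\bar{x}\|$ is the only genuinely non-routine ingredient for part (1), after which Lemma \ref{lem:seq-conv-rate} does the rest. For part (2) the one new idea is the smoothness estimate $\|\nabla\phi(\tilde{x}_w)\|^2\le 2L\bar{\alpha}_w$, which upgrades a per-step decrease that is quadratically small in general to one proportional to $\alpha_w$, producing geometric contraction.
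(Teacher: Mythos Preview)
Your proposal is correct, and for part (1) it follows the same plan as the paper: bound $\|\tilde{s}_w+\tilde{x}_w-\bar{x}\|$ uniformly and feed \eqref{eq:target-quad} into Lemma \ref{lem:seq-conv-rate}. The one difference is how the boundedness of $\tilde{x}_w$ is obtained. The paper does it in two lines: since $\tilde{x}_w$ minimizes the $1$-strongly convex function $h_w(\cdot)+\tfrac12\|\cdot-\bar{x}\|^2$ and $h_w\le f$, one has
\[
f(x^*)+\tfrac12\|x^*-\bar{x}\|^2\ \ge\ h_w(x^*)+\tfrac12\|x^*-\bar{x}\|^2\ \ge\ h_w(\tilde{x}_w)+\tfrac12\|\tilde{x}_w-\bar{x}\|^2+\tfrac12\|\tilde{x}_w-x^*\|^2,
\]
i.e.\ $\alpha_w\ge\tfrac12\|\tilde{x}_w-x^*\|^2$; since $\alpha_w\le\alpha_0$, the iterates stay in a fixed ball around $x^*$. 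Your conjugate-function argument is also valid but longer; the paper's bound has the additional payoff that it is reused in part (2).

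For part (2) you take a genuinely different route. The paper combines the distance bound $\|\tilde{x}_w-x^*\|\le\sqrt{2\alpha_w}$ with the Lipschitz gradient to get $\|\tilde{s}_w+\tilde{x}_w-\bar{x}\|=\|\nabla\phi(\tilde{x}_w)-\nabla\phi(x^*)\|\le L_1\sqrt{2\alpha_w}$, substitutes this into \eqref{eq:target-quad}, and reads off a quadratic inequality in the ratio $\alpha_{w+1}/\alpha_w$ that forces it below a constant less than $1$. You instead bound $\|\nabla\phi(\tilde{x}_w)\|^2$ by $2L(\bar{\alpha}_w-\alpha_w)\le 2L\bar{\alpha}_w$ via the smoothness--suboptimality inequality, and work directly from Lemma \ref{lem:alpha-recurrs}(1) and the closed form for $\tfrac12 t^2$. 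Both arguments are short and correct; the paper's is slightly more economical because it recycles the distance estimate already proved, while yours avoids that estimate entirely at the cost of invoking one extra standard inequality.
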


\begin{proof}
Recall that $h_{w}(\cdot)\leq f(\cdot)$ from Lemma \ref{lem:h-w-leq-f},
so 
\begin{eqnarray*}
\begin{array}{c}
f(x^{*})+\frac{1}{2}\|x^{*}-\bar{x}\|^{2}\end{array} & \geq & \begin{array}{c}
h_{w}(x^{*})+\frac{1}{2}\|x^{*}-\bar{x}\|^{2}\end{array}\\
 & \overset{\scriptsize{\mbox{Alg \ref{alg:basic-dual-ascent} line 2}}}{\geq} & \begin{array}{c}
h_{w}(\tilde{x}_{w})+\frac{1}{2}\|\tilde{x}_{w}-\bar{x}\|^{2}+\frac{1}{2}\|x^{*}-\tilde{x}_{w}\|^{2}.\end{array}\\
\begin{array}{c}
\Rightarrow\alpha_{w}\end{array} & \overset{\eqref{eq:alpha-w}}{\geq} & \begin{array}{c}
\frac{1}{2}\|x^{*}-\tilde{x}_{w}\|^{2}.\end{array}
\end{eqnarray*}
Therefore, $\|\tilde{x}_{w}-x^{*}\|\leq\sqrt{2\alpha_{w}}$. We have
$0\in\partial[f(\cdot)+\frac{1}{2}\|\cdot-\bar{x}\|^{2}](x^{*})$
and $\tilde{s}_{w}+\tilde{x}_{w}-\bar{x}\in\partial[f(\cdot)+\frac{1}{2}\|\cdot-\bar{x}\|^{2}](\tilde{x}_{w})$. 

\textbf{\uline{Case 1: General case}} 

Since $\dom(f)=\mathbb{R}^{m}$ and $\tilde{x}_{w}$ lies in a compact
set, there is some constant $L>0$ such that $\|\tilde{s}_{w}+\tilde{x}_{w}-\bar{x}\|\leq L$.
Hence 
\[
\begin{array}{c}
\frac{1}{2L^{2}}\alpha_{w+1}^{2}+\alpha_{w+1}\leq\frac{1}{2\|\tilde{s}_{w}+\tilde{x}_{w}-\bar{x}\|^{2}}\alpha_{w+1}^{2}+\alpha_{w+1}\overset{\eqref{eq:target-quad}}{\leq}\alpha_{w}.\end{array}
\]

This recurrence together with Lemma \ref{lem:seq-conv-rate} gives
us the $O(1/w)$ convergence rate we need. 

\textbf{\uline{Case 2: Smooth case.}}\textbf{ } Let $L_{1}$ be
the Lipschitz constant on the gradient of $f(\cdot)+\frac{1}{2}\|\cdot-\bar{x}\|^{2}$.
We then have $\|\tilde{s}_{w}+\tilde{x}_{w}-\bar{x}\|\leq L_{1}\|\tilde{x}_{w}-x^{*}\|\leq L_{1}\sqrt{2\alpha_{w}}$.
 Using the formula \eqref{eq:target-quad} gives us $\frac{1}{4L_{1}\alpha_{w}}\alpha_{w+1}^{2}+\alpha_{w+1}\leq\alpha_{w}$,
or 
\[
\begin{array}{c}
\frac{1}{4L_{1}}\left(\frac{\alpha_{w+1}}{\alpha_{w}}\right)^{2}+\frac{\alpha_{w+1}}{\alpha_{w}}\leq1.\end{array}
\]
This gives us the linear convergence as needed.
\end{proof}
\begin{rem}
\label{rem:min-max-of-2-quads}(Minimizing \eqref{eq:max-quad}) The
quadratic program can be solved easily by noting that the minimizer
must be a minimizer of one of the problems 
\begin{eqnarray*}
 &  & \begin{array}{c}
\underset{x}{\min}\,\tilde{h}_{w}(x)+\frac{1}{2}\|x-\bar{x}\|^{2},\text{ }\underset{x}{\min}\,h_{w}(x)+\frac{1}{2}\|x-\bar{x}\|^{2},\end{array}\\
 & \text{ or} & \begin{array}{c}
\underset{x}{\min}\{h_{w}(x)+\frac{1}{2}\|x-\bar{x}\|^{2}:\tilde{h}_{w}(x)=h_{w}(x)\},\end{array}
\end{eqnarray*}
all of which are rather easy to solve. 
\end{rem}

We now state a proposition that will be useful for the proof of convergence.
\begin{prop}
\label{prop:P-D-of-one-block}Consider the problem 
\[
\begin{array}{c}
\underset{x\in\mathbb{R}^{m}}{\min}\frac{1}{2}\|\bar{x}-x\|^{2}+f(x),\end{array}
\]
and its corresponding (Fenchel) dual 
\[
\begin{array}{c}
\underset{y\in\mathbb{R}^{m}}{\max}-\frac{1}{2}\|\bar{x}-y\|^{2}+\frac{1}{2}\|\bar{x}\|^{2}-f^{*}(y).\end{array}
\]
The optimal solutions $x^{*}$ and $y^{*}$ are related by $x^{*}+y^{*}=\bar{x}$,
and strong duality holds.
\end{prop}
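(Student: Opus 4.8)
The plan is to establish both claims by pairing weak duality with an explicitly constructed optimal dual point, which sidesteps any abstract minimax or constraint-qualification argument. First I would record that the primal objective $\frac{1}{2}\|\bar{x}-x\|^2+f(x)$ is proper, closed, strongly convex and coercive (the quadratic term dominates), so a unique primal minimizer $x^*$ exists. Writing the first-order optimality condition and using that $\frac{1}{2}\|\bar{x}-\cdot\|^2$ is differentiable with gradient $x-\bar{x}$, I obtain $0\in(x^*-\bar{x})+\partial f(x^*)$, i.e. $\bar{x}-x^*\in\partial f(x^*)$. Defining $y^*:=\bar{x}-x^*$ then gives the relation $x^*+y^*=\bar{x}$ immediately, and simultaneously records the fact $y^*\in\partial f(x^*)$ that I will exploit below.

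Next I would prove weak duality directly. It is convenient first to simplify the dual objective using $-\frac{1}{2}\|\bar{x}-y\|^2+\frac{1}{2}\|\bar{x}\|^2=\langle\bar{x},y\rangle-\frac{1}{2}\|y\|^2$, so that the dual reads $\langle\bar{x},y\rangle-\frac{1}{2}\|y\|^2-f^*(y)$. For arbitrary $x$ and $y$, forming the gap between the primal and dual objectives and applying the Fenchel--Young inequality $f(x)+f^*(y)\geq\langle x,y\rangle$ yields a lower bound that completes to the perfect square $\frac{1}{2}\|(\bar{x}-x)-y\|^2\geq0$. Hence the dual value never exceeds the primal value, establishing weak duality.

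To close the duality gap I would evaluate the dual objective at $y^*$. Because $y^*\in\partial f(x^*)$, the Fenchel--Young \emph{equality} holds, giving $f^*(y^*)=\langle x^*,y^*\rangle-f(x^*)$. Substituting this together with $\bar{x}-x^*=y^*$ into $\langle\bar{x},y^*\rangle-\frac{1}{2}\|y^*\|^2-f^*(y^*)$ and simplifying collapses the dual value to $\frac{1}{2}\|\bar{x}-x^*\|^2+f(x^*)$, which is exactly the primal value at $x^*$. Combined with weak duality, this shows that $x^*$ and $y^*$ are both optimal and that strong duality holds, completing the proof.

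The step I expect to carry the most weight is the passage from weak to strong duality; the explicit construction of $y^*$ from the primal optimality condition is what makes this routine, since it produces a dual-feasible point whose value already matches the primal optimum, so no separate constraint-qualification or dual-attainment argument is needed. As an alternative, one could invoke a standard Fenchel--Rockafellar theorem, using that the quadratic is finite and continuous everywhere (hence at a point of $\dom(f)$) to guarantee strong duality with dual attainment, but the direct construction above is cleaner and self-contained.
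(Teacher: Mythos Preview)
Your proof is correct. The paper, by contrast, gives no argument at all: it simply observes that the result ``can be seen to be Moreau's decomposition theorem'' and stops there. Your approach is therefore genuinely different in that it is self-contained: you construct the dual optimizer $y^*=\bar{x}-x^*$ directly from the primal optimality condition, verify weak duality via Fenchel--Young, and then close the gap by evaluating the dual at $y^*$ using the Fenchel--Young equality. This buys you a proof that does not assume the reader knows Moreau's theorem and makes transparent exactly why $x^*+y^*=\bar{x}$ falls out of the first-order condition; the paper's one-line citation, on the other hand, is quicker but opaque for anyone not already familiar with that result. Either route is perfectly adequate here.
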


\begin{proof}
This result can be seen to be Moreau's decomposition theorem.
\end{proof}
In view of Proposition \ref{prop:P-D-of-one-block}, we now explain
that Algorithm \ref{alg:basic-dual-ascent} can be interpreted as
a dual ascent algorithm. We can see that Algorithm \ref{alg:basic-dual-ascent}
finds $h_{w}(\cdot)$ for $w=0,1,\dots$ such that $h_{w}^{*}(\cdot)\geq f^{*}(\cdot)$
and dual iterates $\{\tilde{y}_{w}\}_{w=0}^{\infty}$ so that $\{-h_{w}^{*}(\tilde{y}_{w})+\frac{1}{2}\|\bar{x}\|^{2}-\frac{1}{2}\|\tilde{y}_{w}-\bar{x}\|^{2}\}_{w=0}^{\infty}$
is a monotonically nondecreasing sequence that converges to $-f^{*}(y^{*})+\frac{1}{2}\|\bar{x}\|^{2}-\frac{1}{2}\|y^{*}-\bar{x}\|^{2}$,
where $y^{*}$ is the optimal dual variable for \eqref{eq:small-pblm}.
This interpretation shall be exploited in our subdifferentiable distributed
Dykstra's algorithm. 

\section{\label{sec:main-alg}Deterministic Distributed Asynchronous Dykstra
Algorithm}

We now proceed to integrate the dual ascent algorithm in Section \ref{sec:First-alg}
into the distributed Dykstra's algorithm for the problem \eqref{eq:distrib-dyk-primal}.
We partition the vertex set $\mathcal{V}$ as the disjoint union \emph{$\mathcal{V}=\mathcal{V}_{1}\cup\mathcal{V}_{2}\cup\mathcal{V}_{3}\cup\mathcal{V}_{4}$
}so that 
\begin{itemize}
\item $f_{i}(\cdot)$ are proximable functions for all $i\in\mathcal{V}_{1}$.
\item $f_{i}(\cdot)$ are indicator functions of closed convex sets for
all $i\in\mathcal{V}_{2}$.
\item $f_{i}(\cdot)$ are proximable functions such that $\dom(f_{i})=\mathbb{R}^{m}$
for all $i\in\mathcal{V}_{3}$.
\item \emph{$f_{i}(\cdot)$ }are subdifferentiable functions (i.e., a subgradient
is easy to obtain) such that $\dom(f_{i})=\mathbb{R}^{m}$ for all
$i\in\mathcal{V}_{4}$. 
\end{itemize}
We had the 3 sets $\mathcal{V}_{1}$, $\mathcal{V}_{2}$ and $\mathcal{V}_{3}$
in \cite{Pang_Dyk_spl}. In principle, the vertices in $\mathcal{V}_{2}$
and $\mathcal{V}_{3}$ can be placed into $\mathcal{V}_{1}$. As explained
in \cite{Pang_Dyk_spl}, the advantage of separating $\mathcal{V}_{3}$
is that more than one function can be minimized at a time for vertices
in this set without affecting the proof of convergence (see Proposition
\ref{prop:control-growth}), and the advantage of separating $\mathcal{V}_{2}$
is that one can apply a greedy SHQP step in \cite{Pang_DBAP}. The
set $\mathcal{V}_{4}$ contains subdifferentiable functions, which
is the subject of this paper. 

To simplify calculations, we let $\mathbf{{v}}_{A}$, $\mathbf{{v}}_{H}$
and $\mathbf{{x}}$ be denoted by\begin{subequations}\label{eq_m:all_acronyms}
\begin{eqnarray}
\mathbf{{v}}_{H} & = & \sum_{((i,j),\bar{k})\in\bar{\mathcal{E}}}\mathbf{{z}}_{((i,j),\bar{k})}\label{eq:v-H-def}\\
\mathbf{{v}}_{A} & = & \mathbf{{v}}_{H}+\sum_{i\in\mathcal{V}}\mathbf{{z}}_{i}\label{eq:from-10}\\
\mathbf{{x}} & = & \bar{\mathbf{{x}}}-\mathbf{{v}}_{A}.\label{eq:x-from-v-A}
\end{eqnarray}
\end{subequations}Intuitively, $\mathbf{{v}}_{H}$ describes the
sum of the dual variables due to $H_{((i,j),\bar{k})}$ for all $((i,j),\bar{k})\in\bar{\mathcal{E}}$,
$\mathbf{{v}}_{A}$ is the sum of all dual variables, and $\mathbf{{x}}$
is the estimate of the primal variable. 

\subsection{\label{subsec:Partial-comm-prelim}Partial communication of data}

One insight that we point out in this paper is that Algorithm \ref{alg:Ext-Dyk}
supports the partial communication of data. We lay down the foundations
of the parts of Algorithm \ref{alg:Ext-Dyk} relevant for this insight. 

Let $D\subset[\mathbb{R}^{m}]^{|\mathcal{V}|}$ be the diagonal set
defined by 
\begin{equation}
D:=\{\mathbf{{x}}\in[\mathbb{R}^{m}]^{|\mathcal{V}|}:\mathbf{{x}}_{1}=\mathbf{{x}}_{2}=\cdots=\mathbf{{x}}_{|\mathcal{V}|}\}.\label{eq:diagonal-set}
\end{equation}
With the definition of the hyperplanes $H_{((i,j),\bar{k})}$ in \eqref{eq:def-halfspaces}
and $\mathcal{G}=(\mathcal{V},\mathcal{E})$ being a connected graph,
we have 
\begin{equation}
\bigcap_{((i,j),\bar{k})\in\bar{\mathcal{E}}}H_{((i,j),\bar{k})}=D\mbox{ and }\sum_{((i,j),\bar{k})\in\bar{\mathcal{E}}}H_{((i,j),\bar{k})}^{\perp}=D^{\perp}=\left\{ \mathbf{{z}}\in[\mathbb{R}^{m}]^{|\mathcal{V}|}:\sum_{i\in\mathcal{V}}\mathbf{{z}}_{i}=0\right\} .\label{eq:D-and-D-perp}
\end{equation}

\begin{prop}
\label{prop:E-connects-V}Suppose $\mathcal{G}=(\mathcal{V},\mathcal{E})$
is a connected graph. Let $H_{((i,j),\bar{k})}$ be the hyperplane
\eqref{eq:def-halfspaces}. Let $\bar{\mathcal{E}}'$ be a subset
of $\bar{\mathcal{E}}$. The following conditions are equivalent:

\begin{enumerate}
\item $\cap_{((i,j),\bar{k})\in\bar{\mathcal{E}}'}H_{((i,j),\bar{k})}=D$
\item $\sum_{((i,j),\bar{k})\in\mathcal{\bar{E}}'}H_{((i,j),\bar{k})}^{\perp}=D^{\perp}.$
\item For each $\bar{k}\in\{1,\dots,m\}$, the graph $\mathcal{G}'=(\mathcal{V},\mathcal{E}_{\bar{k}}')$
is connected, where $\mathcal{E}_{\bar{k}}':=\{(i,j)\in\mathcal{E}:((i,j),\bar{k})\in\bar{\mathcal{E}}'\}$. 
\end{enumerate}
\end{prop}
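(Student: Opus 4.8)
The plan is to break the three-way equivalence into two parts that are proved by entirely different tools: (1)$\Leftrightarrow$(2) is pure linear-algebra duality, while (1)$\Leftrightarrow$(3) is graph-theoretic. I treat them in turn.

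For (1)$\Leftrightarrow$(2) I would invoke the standard fact that for any finite family of subspaces $\{V_\alpha\}$ of a Euclidean space one has $(\cap_\alpha V_\alpha)^\perp = \sum_\alpha V_\alpha^\perp$, together with the involution $(W^\perp)^\perp = W$ for subspaces $W$. Each $H_{((i,j),\bar{k})}$ is a hyperplane through the origin, hence a subspace, and $D,D^\perp$ are subspaces. Applying the identity to the family indexed by $\bar{\mathcal{E}}'$ gives $\big(\cap_{((i,j),\bar{k})\in\bar{\mathcal{E}}'} H_{((i,j),\bar{k})}\big)^\perp = \sum_{((i,j),\bar{k})\in\bar{\mathcal{E}}'} H_{((i,j),\bar{k})}^\perp$. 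Condition (1) reads $\cap H = D$; taking orthogonal complements of both sides converts it into $\sum H^\perp = D^\perp$, which is (2), and conversely taking complements of (2) and using the involution returns (1). This paragraph is routine and I expect no obstacle.

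For (1)$\Leftrightarrow$(3) the key observation is that the constraint defining $H_{((i,j),\bar{k})}$, namely $[\mathbf{x}_i]_{\bar{k}} = [\mathbf{x}_j]_{\bar{k}}$, involves only the $\bar{k}$-th coordinate. Hence the intersection $\cap_{((i,j),\bar{k})\in\bar{\mathcal{E}}'} H_{((i,j),\bar{k})}$ decouples across $\bar{k}\in\{1,\dots,m\}$: viewing a point $\mathbf{x}$ through its coordinate slices $i\mapsto[\mathbf{x}_i]_{\bar{k}}$, membership in the intersection is equivalent to requiring, for every $\bar{k}$, that $[\mathbf{x}_i]_{\bar{k}}=[\mathbf{x}_j]_{\bar{k}}$ for all $(i,j)\in\mathcal{E}_{\bar{k}}'$, i.e.\ that the vertex-function $i\mapsto[\mathbf{x}_i]_{\bar{k}}$ is constant on each connected component of $(\mathcal{V},\mathcal{E}_{\bar{k}}')$. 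In the same slice coordinates $D$ is described by requiring each slice to be constant across all of $\mathcal{V}$, and one always has $D\subseteq\cap H$. It therefore suffices to show, slice by slice, that ``constant on each connected component of $(\mathcal{V},\mathcal{E}_{\bar{k}}')$'' coincides with ``constant on all of $\mathcal{V}$'' precisely when $(\mathcal{V},\mathcal{E}_{\bar{k}}')$ is connected. The forward direction is immediate. For the reverse I would argue by contrapositive: if $(\mathcal{V},\mathcal{E}_{\bar{k}_0}')$ were disconnected with a proper component $C\subsetneq\mathcal{V}$, set $[\mathbf{x}_i]_{\bar{k}}=1$ when $\bar{k}=\bar{k}_0$ and $i\in C$ and $[\mathbf{x}_i]_{\bar{k}}=0$ otherwise; every edge of $(\mathcal{V},\mathcal{E}_{\bar{k}_0}')$ joins two vertices in a common component, so all constraints hold and $\mathbf{x}\in\cap H$, yet the $\bar{k}_0$-slice is nonconstant, so $\mathbf{x}\notin D$, violating (1).

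Assembling the slices gives (1)$\Leftrightarrow$(3): since (1) demands the set equality $\cap H=D$, it holds iff the component-versus-global agreement holds simultaneously for every $\bar{k}$, which is exactly the ``for each $\bar{k}$'' quantifier in (3). The only mild obstacle is bookkeeping this quantifier and confirming the decoupling is genuinely a direct-sum decomposition across coordinate slices, but this is straightforward once the slice notation is fixed. Chaining the two equivalences completes the proof.
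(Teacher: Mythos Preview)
Your proposal is correct and follows exactly the route the paper indicates: the paper's own proof is the single sentence ``The equivalence between (1) and (3) is easy, and the equivalence between (1) and (2) is simple linear algebra,'' and you have faithfully unpacked both of those remarks---the subspace identity $(\cap_\alpha V_\alpha)^\perp=\sum_\alpha V_\alpha^\perp$ for (1)$\Leftrightarrow$(2), and the coordinate-slice decoupling plus the standard ``constant on components equals globally constant iff connected'' argument for (1)$\Leftrightarrow$(3). There is nothing to add or correct.
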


\begin{proof}
The equivalence between (1) and (3) is easy, and the equivalence between
(1) and (2) is simple linear algebra. 
\end{proof}
\begin{defn}
\label{def:E-connects-V}We say that\emph{ $\bar{\mathcal{E}}'\subset\bar{\mathcal{E}}$
connects $\mathcal{V}$ }if any of the equivalent properties in Proposition
\ref{prop:E-connects-V} is satisfied.
\end{defn}

\begin{rem}
\label{rem:partial-comms-change}(Change from \cite{Pang_Dist_Dyk})
The change in this paper from \cite{Pang_Dist_Dyk} is that each hyperplane
$H_{((i,j),\bar{k})}$ is now of codimension 1. In \cite{Pang_Dist_Dyk},
we defined the hyperplanes $H_{(i,j)}:=\{\mathbf{{x}}\in[\mathbb{R}^{m}]^{|\mathcal{V}|}:\mathbf{{x}}_{i}=\mathbf{{x}}_{j}\}$
of codimension $m$ which are indexed by $(i,j)\in\mathcal{E}$ instead.
The advantage of introducing the additional variables is that we can
have a partial transfer of the data between two vertices rather than
a full transfer. This will be elaborated in Example \ref{exa:partial-comms}.
\end{rem}

\begin{lem}
\label{lem:express-v-as-sum}(Expressing $v$ as a sum) Recall the
definitions of $D$ and $H_{((i,j),\bar{k})}$ in \eqref{eq:diagonal-set}
and \eqref{eq:def-halfspaces}. There is a $C_{1}>0$ such that for
all $\mathbf{{v}}\in D^{\perp}$ and $\bar{\mathcal{E}}'\subset\bar{\mathcal{E}}$
such that $\bar{\mathcal{E}}'$ connects $\mathcal{V}$, we can find
$\mathbf{{z}}_{((i,j),\bar{k})}\in H_{((i,j),\bar{k})}^{\perp}$ for
all $((i,j),\bar{k})\in\bar{\mathcal{E}}'$ such that $\sum_{((i,j),\bar{k})\in\bar{\mathcal{E}}'}\mathbf{{z}}_{((i,j),\bar{k})}=\mathbf{{v}}$
and $\|\mathbf{{z}}_{((i,j),\bar{k})}\|\leq C_{1}\|\mathbf{{v}}\|$
for all $((i,j),\bar{k})\in\bar{\mathcal{E}}'$.
\end{lem}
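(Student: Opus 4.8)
The plan is to reduce the claim to a coordinatewise statement and then to a standard fact about the incidence structure of a connected graph, extracting the uniform constant from a spanning-tree argument.

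First I would observe that the problem decouples across the $m$ scalar coordinates. Writing $\mathbf{v}\in D^{\perp}$ through its components $[\mathbf{v}_{i}]_{\bar{k}}$, the description of $D^{\perp}$ in \eqref{eq:D-and-D-perp} says precisely that for each $\bar{k}\in\{1,\dots,m\}$ the slice $u^{(\bar{k})}:=([\mathbf{v}_{i}]_{\bar{k}})_{i\in\mathcal{V}}\in\mathbb{R}^{|\mathcal{V}|}$ has zero sum. Moreover $H_{((i,j),\bar{k})}^{\perp}$ is the one-dimensional span of the vector supported on coordinate $\bar{k}$ with entry $+1$ at position $i$ and $-1$ at position $j$, so the unknown $\mathbf{z}_{((i,j),\bar{k})}$ affects only slice $\bar{k}$. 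Hence it suffices to solve, separately for each $\bar{k}$, the scalar problem of writing the zero-sum vector $u^{(\bar{k})}$ as $\sum_{(i,j)\in\mathcal{E}_{\bar{k}}'}c_{ij}(e_{i}-e_{j})$, where $\mathcal{E}_{\bar{k}}'$ is the edge set from condition (3) of Proposition \ref{prop:E-connects-V}, which is connected because $\bar{\mathcal{E}}'$ connects $\mathcal{V}$.

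Next I would fix such a $\bar{k}$ and choose a spanning tree $T$ of the connected graph $(\mathcal{V},\mathcal{E}_{\bar{k}}')$. Rooting $T$ at an arbitrary vertex, the zero-sum vector $u^{(\bar{k})}$ admits a unique representation using only the tree edges: the coefficient $c_{ij}$ on the edge joining a vertex to its parent equals the sum of the entries of $u^{(\bar{k})}$ over the subtree hanging below that edge. Every non-tree edge in $\mathcal{E}_{\bar{k}}'$ receives coefficient zero, which is admissible since $0\in H_{((i,j),\bar{k})}^{\perp}$. I then set $\mathbf{z}_{((i,j),\bar{k})}:=c_{ij}(e_{(i,\bar{k})}-e_{(j,\bar{k})})$, and summing over $\bar{k}$ reconstructs $\mathbf{v}$.

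Finally, for the norm bound, each subtree sum satisfies $|c_{ij}|\leq\|u^{(\bar{k})}\|_{1}\leq\sqrt{|\mathcal{V}|}\,\|u^{(\bar{k})}\|_{2}\leq\sqrt{|\mathcal{V}|}\,\|\mathbf{v}\|$ by Cauchy--Schwarz, and since $\|e_{(i,\bar{k})}-e_{(j,\bar{k})}\|=\sqrt{2}$ this gives $\|\mathbf{z}_{((i,j),\bar{k})}\|\leq\sqrt{2|\mathcal{V}|}\,\|\mathbf{v}\|$ for every edge, so $C_{1}=\sqrt{2|\mathcal{V}|}$ works. The point to emphasize—and the only place where care is needed—is that this constant depends only on $|\mathcal{V}|$ and not on the particular connecting subset $\bar{\mathcal{E}}'$; the spanning-tree construction delivers this uniformity automatically, so no separate compactness or finiteness argument over the (finitely many) subsets $\bar{\mathcal{E}}'$ is required. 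I expect the main, though modest, obstacle to be cleanly justifying the subtree-sum formula for the tree coefficients together with its $\ell^{1}$ bound, rather than anything structurally difficult.
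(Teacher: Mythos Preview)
Your argument is correct. The paper itself does not give a proof: it simply asserts that the result is elementary linear algebra and refers to \cite{Pang_Dist_Dyk} for a similar statement. Your coordinatewise reduction followed by the spanning-tree construction is exactly the sort of elementary argument the paper has in mind, and you go further by producing an explicit, uniform constant $C_{1}=\sqrt{2|\mathcal{V}|}$ that visibly depends only on $|\mathcal{V}|$ and not on the particular connecting subset $\bar{\mathcal{E}}'$. The alternative route---invoking finiteness of the collection of connecting subsets $\bar{\mathcal{E}}'$ and taking a maximum of operator norms of right inverses---would also work but would not yield an explicit constant; your construction is cleaner in that respect.
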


\begin{proof}
This is elementary linear algebra. We refer to \cite{Pang_Dist_Dyk}
for a proof of a similar result. 
\end{proof}

\subsection{Algorithm description and preliminaries}

In this subsection, we present Algorithm \ref{alg:Ext-Dyk} below
and recall some of the results that were presented in \cite{Pang_Dist_Dyk}
that are necessary for further discussions.

Recall that in the one node case in Section \ref{sec:First-alg},
the subdifferentiable function $f_{i}(\cdot)$ is handled using lower
approximates. In addition to \eqref{eq:Dykstra-dual-defn}, we need
to consider the function
\begin{equation}
\begin{array}{c}
F_{n,w}(\{\mathbf{{z}}_{\alpha}\}_{\alpha\in\bar{\mathcal{E}}\cup\mathcal{V}}):=-\frac{1}{2}\bigg\|\bar{\mathbf{{x}}}-\underset{\alpha\in\bar{\mathcal{E}}\cup\mathcal{V}}{\sum}\mathbf{{z}}_{\alpha}\bigg\|^{2}+\frac{1}{2}\|\bar{\mathbf{{x}}}\|^{2}-\underset{\alpha\in\bar{\mathcal{E}}\cup\mathcal{V}}{\sum}\mathbf{f}_{\alpha,n,w}^{*}(\mathbf{{z}}_{\alpha}),\end{array}\label{eq:Dykstra-dual-defn-1}
\end{equation}
where for all $n\geq1$ and $w\in\{1,\dots,\bar{w}\}$, $\mathbf{f}_{\alpha,n,w}:[\mathbb{R}^{m}]^{|\mathcal{V}|}\to\mathbb{R}$
satisfies \begin{subequations}\label{eq_m:h-a-n-w} 
\begin{eqnarray}
\mathbf{f}_{\alpha,n,w}(\cdot) & = & \mathbf{f}_{\alpha}(\cdot)\mbox{ for all }\alpha\in[\bar{\mathcal{E}}\cup\mathcal{V}]\backslash\mathcal{V}_{4}\label{eq:h-a-n-w-eq-h-a}\\
\mbox{ and }\mathbf{f}_{\alpha,n,w}(\cdot) & \leq & \mathbf{f}_{\alpha}(\cdot)\mbox{ for all }\alpha\in\mathcal{V}_{4}.\label{eq:h-a-n-w-lesser}
\end{eqnarray}
\end{subequations} So $F_{n,w}(\cdot)\leq F(\cdot)$. We present
Algorithm \vref{alg:Ext-Dyk}. 

\begin{algorithm}[!h]
\begin{lyxalgorithm}
\label{alg:Ext-Dyk}(Distributed Dykstra's algorithm) Consider the
problem \eqref{eq:Dyk-primal} along with the associated dual problem
\eqref{eq:dual-fn}.

Let $\bar{w}$ be a positive integer. Let $C_{1}>0$ satisfy Lemma
\ref{lem:express-v-as-sum}. For each $\alpha\in[\bar{\mathcal{E}}\cup\mathcal{V}]\backslash\mathcal{V}_{4}$,
$n\geq1$ and $w\in\{1,\dots,\bar{w}\}$, let $\mathbf{f}_{\alpha,n,w}:[\mathbb{R}^{m}]^{|\mathcal{V}|}\to\mathbb{R}$
be as defined in \eqref{eq_m:h-a-n-w}. Our distributed Dykstra's
algorithm is as follows:

01$\quad$Let 

\begin{itemize}
\item $\mathbf{{z}}_{i}^{1,0}\in[\mathbb{R}^{m}]^{|\mathcal{V}|}$ be a
starting dual vector for $\mathbf{f}_{i}(\cdot)$ for each $i\in\mathcal{V}$
so that $[\mathbf{{z}}_{i}^{1,0}]_{j}=0\in\mathbb{R}^{m}$ for all
$j\in\mathcal{V}\backslash\{i\}$. 
\item $\mathbf{{v}}_{H}^{1,0}\in D^{\perp}$ be a starting dual vector for
\eqref{eq:dual-fn}.

\begin{itemize}
\item Note: $\{\mathbf{{z}}_{((i,j),\bar{k})}^{n,0}\}_{((i,j),\bar{k})\in\bar{\mathcal{E}}}$
is defined through $\mathbf{{v}}_{H}^{n,0}$ in \eqref{eq_m:resetted-z-i-j}.
\end{itemize}
\item Let $\mathbf{{x}}^{1,0}$ be $\mathbf{{x}}^{1,0}=\bar{\mathbf{{x}}}-\mathbf{{v}}_{H}^{1,0}-\sum_{i\in\mathcal{V}}\mathbf{{z}}_{i}^{1,0}$.
\end{itemize}
02$\quad$For each $i\in\mathcal{V}_{4}$, let $\mathbf{f}_{i,1,0}:[\mathbb{R}^{m}]^{|\mathcal{V}|}\to\mathbb{R}$
be a function such that $\mathbf{f}_{i,1,0}(\cdot)\leq\mathbf{f}_{i}(\cdot)$

03$\quad$For $n=1,2,\dots$

04$\quad$$\quad$\textup{Let $\bar{\mathcal{E}}_{n}\subset\bar{\mathcal{E}}$
be such that $\bar{\mathcal{E}}_{n}$ connects $\mathcal{V}$ in the
sense of Definition \ref{def:E-connects-V}.}

05$\quad$$\quad$\textup{Define $\{\mathbf{{z}}_{((i,j),\bar{k})}^{n,0}\}_{((i,j),\bar{k})\in\bar{\mathcal{E}}}$
so that:\begin{subequations}\label{eq_m:resetted-z-i-j} 
\begin{eqnarray}
\mathbf{{z}}_{((i,j),\bar{k})}^{n,0} & = & 0\mbox{ for all }((i,j),\bar{k})\notin\mathcal{\bar{E}}_{n}\label{eq:reset-z-i-j-1}\\
\mathbf{{z}}_{((i,j),\bar{k})}^{n,0} & \in & H_{((i,j),\bar{k})}^{\perp}\mbox{ for all }((i,j),\bar{k})\in\bar{\mathcal{E}}\label{eq:reset-z-i-j-2}\\
\|\mathbf{{z}}_{((i,j),\bar{k})}^{n,0}\| & \leq & C_{1}\|\mathbf{{v}}_{H}^{n,0}\|\mbox{ for all }((i,j),\bar{k})\in\bar{\mathcal{E}}\label{eq:reset-z-i-j-3}\\
\mbox{ and }\sum_{((i,j),\bar{k})\in\bar{\mathcal{E}}}\mathbf{{z}}_{((i,j),\bar{k})}^{n,0} & = & \mathbf{{v}}_{H}^{n,0}.\label{eq:reset-z-i-j-4}
\end{eqnarray}
\end{subequations}}

$\quad$$\quad$\textup{(This is possible by Lemma \ref{lem:express-v-as-sum}.) }

06$\quad$$\quad$For $w=1,2,\dots,\bar{w}$

07$\quad$$\quad$$\quad$Choose a set $S_{n,w}\subset\bar{\mathcal{E}}_{n}\cup\mathcal{V}$
such that $S_{n,w}\neq\emptyset$. 

08$\quad$$\quad$$\quad$If $S_{n,w}\subset\mathcal{V}_{4}$, then 

09$\quad$$\quad$$\quad$$\quad$Apply Algorithm \ref{alg:subdiff-subalg}.

10$\quad$$\quad$$\quad$else

11$\quad$$\quad$$\quad$$\quad$Set $\mathbf{f}_{i,n,w}(\cdot):=\mathbf{f}_{i,n,w-1}(\cdot)$
for all $i\in\mathcal{V}_{4}$.

12$\quad$$\quad$$\quad$$\quad$Define $\{\mathbf{{z}}_{\alpha}^{n,w}\}_{\alpha\in S_{n,w}}$
by 
\begin{equation}
\{\mathbf{{z}}_{\alpha}^{n,w}\}_{\alpha\in S_{n,w}}=\underset{z_{\alpha},\alpha\in S_{n,w}}{\arg\min}\frac{1}{2}\left\Vert \bar{\mathbf{{x}}}-\sum_{\alpha\notin S_{n,w}}\mathbf{{z}}_{\alpha}^{n,w-1}-\sum_{\alpha\in S_{n,w}}\mathbf{{z}}_{\alpha}\right\Vert ^{2}+\sum_{\alpha\in S_{n,w}}\mathbf{f}_{\alpha,n,w}^{*}(\mathbf{{z}}_{\alpha}).\label{eq:Dykstra-min-subpblm}
\end{equation}

13$\quad$$\quad$$\quad$end if 

14$\quad$$\quad$$\quad$Set $\mathbf{{z}}_{\alpha}^{n,w}:=\mathbf{{z}}_{\alpha}^{n,w-1}$
for all $\alpha\notin S_{n,w}$.

15$\quad$$\quad$End For 

16$\quad$$\quad$Let $\mathbf{{z}}_{i}^{n+1,0}=\mathbf{{z}}_{i}^{n,\bar{w}}$
for all $i\in\mathcal{V}$ and $\mathbf{{v}}_{H}^{n+1,0}=\mathbf{{v}}_{H}^{n,\bar{w}}=\sum_{((i,j),\bar{k})\in\bar{\mathcal{E}}}\mathbf{{z}}_{((i,j),\bar{k})}^{n,\bar{w}}$.

17$\quad$$\quad$Let $\mathbf{f}_{i,n+1,0}(\cdot)=\mathbf{f}_{i,n,\bar{w}}(\cdot)$
for all $i\in\mathcal{V}_{4}$.

18$\quad$End For 
\end{lyxalgorithm}

\end{algorithm}

Even though Algorithm \ref{alg:Ext-Dyk} is described so that each
node $i\in\mathcal{V}$ and $((i,j),\bar{k})\in\bar{\mathcal{E}}$
is associated with a dual variable $\mathbf{{z}}_{\alpha}\in[\mathbb{R}^{m}]^{|\mathcal{V}|}$,
we point out that the size of the dual variable $z_{\alpha}$ that
needs to be stored in each node and edge is small due to sparsity.
\begin{prop}
\label{prop:sparsity}(Sparsity of $\mathbf{{z}}_{\alpha}$) We have
$[\mathbf{{z}}_{i}^{n,w}]_{j}=0$ for all $j\in\mathcal{V}\backslash\{i\}$,
$n\geq1$ and $w\in\{0,1,\dots,\bar{w}\}$. Similarly, for all $n\geq1$,
$w\in\{0,1,\dots,\bar{w}\}$ and $(e,\bar{k})\in\bar{\mathcal{E}}$,
the vector $\mathbf{{z}}_{(e,\bar{k})}^{n,w}\in[\mathbb{R}^{m}]^{|\mathcal{V}|}$
satisfies $[[\mathbf{{z}}_{(e,\bar{k})}^{n,w}]_{i'}]_{k'}=0$ unless
$\bar{k}=k'$ and $i$ is an endpoint of $e$.
\end{prop}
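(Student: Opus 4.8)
The plan is to prove both sparsity statements simultaneously by induction on the pair of indices $(n,w)$ labelling the iterates of Algorithm \ref{alg:Ext-Dyk}, driven by the block structure of the functions $\mathbf{f}_\alpha$ and their minorants. Two conjugate computations do all the work. First, for a vertex $i\in\mathcal{V}$, since $\mathbf{f}_i(\mathbf{{x}})=f_i(x_i)$ depends only on the $i$-th block, a direct calculation gives $\mathbf{f}_i^*(\mathbf{{z}})=f_i^*([\mathbf{z}]_i)$ when $[\mathbf{z}]_j=0$ for all $j\neq i$, and $\mathbf{f}_i^*(\mathbf{{z}})=+\infty$ otherwise; hence $\dom\mathbf{f}_i^*$ is contained in the subspace $\{\mathbf{{z}}:[\mathbf{z}]_j=0\text{ for }j\neq i\}$. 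The same conclusion holds verbatim for every minorant $\mathbf{f}_{i,n,w}$, because each such minorant is built (as in Algorithm \ref{alg:basic-dual-ascent}) from affine functions of $x_i$ alone and so still depends only on the $i$-th block. Second, for an edge index $(e,\bar{k})=((i,j),\bar{k})\in\bar{\mathcal{E}}$, the set $H_{((i,j),\bar{k})}$ in \eqref{eq:def-halfspaces} is a codimension-one subspace, so $\mathbf{f}_{(e,\bar{k})}^*=\delta_{H_{((i,j),\bar{k})}}^*=\delta_{H_{((i,j),\bar{k})}^\perp}$, and $H_{((i,j),\bar{k})}^\perp$ is the line spanned by the single vector $\mathbf{{u}}$ whose $i$-th block equals the standard basis vector $e_{\bar{k}}\in\mathbb{R}^m$, whose $j$-th block equals $-e_{\bar{k}}$, and whose remaining blocks vanish.

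With these two facts the desired sparsity is exactly membership of the iterate in the domain of the relevant conjugate: for a vertex, membership in $\{[\mathbf{z}]_j=0,\ j\neq i\}$; for an edge, membership in $\mathbb{R}\mathbf{{u}}$, whose only nonzero entries are $[[\mathbf{z}]_i]_{\bar{k}}$ and $[[\mathbf{z}]_j]_{\bar{k}}$, which is precisely the claimed pattern. For the base case, line 1 of Algorithm \ref{alg:Ext-Dyk} sets $[\mathbf{{z}}_i^{1,0}]_j=0$ for $j\neq i$ directly, and the reset in line 5, specifically \eqref{eq:reset-z-i-j-2}, places each $\mathbf{{z}}_{(e,\bar{k})}^{n,0}$ into $H_{((i,j),\bar{k})}^\perp$; this re-expression is legitimate by Lemma \ref{lem:express-v-as-sum}.

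For the inductive step from $w-1$ to $w$, I would split on the branch taken. If $\alpha\in S_{n,w}$ and the direct minimization \eqref{eq:Dykstra-min-subpblm} is used (line 12), then any $\mathbf{{z}}_\alpha$ outside $\dom\mathbf{f}_{\alpha,n,w}^*$ makes the objective $+\infty$, so the argmin necessarily lies in that domain, which by the two conjugate computations yields precisely the claimed sparsity of $\mathbf{{z}}_\alpha^{n,w}$. If $\alpha\notin S_{n,w}$, then line 14 copies $\mathbf{{z}}_\alpha^{n,w}:=\mathbf{{z}}_\alpha^{n,w-1}$, which is sparse by the induction hypothesis; the carry-overs in lines 16--17 are likewise plain copies and preserve sparsity. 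The one branch needing separate attention is $S_{n,w}\subset\mathcal{V}_4$ (line 9), where the update is routed through the bundle-type Algorithm \ref{alg:subdiff-subalg} rather than \eqref{eq:Dykstra-min-subpblm}.

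The main obstacle I anticipate is precisely that last branch: I must check that Algorithm \ref{alg:subdiff-subalg} keeps each new minorant $\mathbf{f}_{i,n,w}$ a function of the block $x_i$ alone and produces its dual update inside $\dom\mathbf{f}_{i,n,w}^*$. This reduces to verifying that the subgradients and affine pieces generated there are taken with respect to $x_i$ only (mirroring the construction of $\tilde{h}_w$ and $h_{w+1}$ in Algorithm \ref{alg:basic-dual-ascent}), so that the conjugate stays supported on block $i$; granting that, the vertex conjugate computation applies unchanged and the induction closes. Everything else is the routine bookkeeping of tracking which coordinates can become nonzero, together with the elementary linear algebra identifying $H_{((i,j),\bar{k})}^\perp$ with $\mathbb{R}\mathbf{{u}}$.
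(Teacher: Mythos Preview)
Your proposal is correct and follows essentially the same approach as the paper: the paper's brief proof sketch points to exactly the two facts you use, namely that each $\mathbf{f}_{i,n,w}$ depends only on the $i$-th block (forcing $\dom\mathbf{f}_{i,n,w}^*$ into that block) and that $\mathbf{f}_{(e,\bar{k})}^*=\delta_{H_{(e,\bar{k})}^\perp}$ with $H_{(e,\bar{k})}^\perp$ supported on the two endpoint coordinates in component $\bar{k}$. Your anticipated obstacle in the $S_{n,w}\subset\mathcal{V}_4$ branch is in fact a non-issue, since line~6 of Algorithm~\ref{alg:subdiff-subalg} (equation~\eqref{eq:def-z-i}) explicitly sets $\mathbf{f}_{i,n,w}(\mathbf{x}):=f_{i,n,w}(\mathbf{x}_i)$ and $[\mathbf{z}_i^{n,w}]_j=0$ for $j\neq i$, so the required sparsity is built directly into the update rather than needing to be derived.
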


\begin{proof}
The proof of this result is similar to the corresponding result in
\cite{Pang_Dist_Dyk}. The claim for $z_{i}^{n,w}$ relies on the
fact that $\mathbf{f}_{i,n,w}(\cdot)$ depends only on the $i$-th
component, and the claim for $\mathbf{{z}}_{(e,\bar{k})}^{n,w}$ relies
on the fact that $\mathbf{f}_{(e,\bar{k})}(\cdot)=\delta_{H_{(e,\bar{k})}^{\perp}}(\cdot)$,
with $H_{(e,\bar{k})}^{\perp}$ containing vectors that are zero in
all but 2 coordinates.
\end{proof}
Dykstra's algorithm is traditionally written in terms of solving for
the primal variable $x$. For completeness, we show the equivalence
between \eqref{eq:Dykstra-min-subpblm} and the primal minimization
problem. The proof is easily extended from \cite{Pang_Dyk_spl}. 
\begin{prop}
\label{prop:subproblems}(On solving \eqref{eq:Dykstra-min-subpblm})
If a minimizer $\{\mathbf{{z}}_{\alpha}^{n,w}\}_{\alpha\in S_{n,w}}$
for \eqref{eq:Dykstra-min-subpblm} exists, then the $x^{n,w}$ in
\eqref{eq:x-from-v-A} satisfies 
\begin{equation}
\begin{array}{c}
\mathbf{{x}}^{n,w}=\underset{\mathbf{{x}}\in[\mathbb{R}^{m}]^{|\mathcal{V}|}}{\arg\min}\underset{\alpha\in S_{n,w}}{\sum}\mathbf{f}_{\alpha,n,w}(\mathbf{{x}})+\frac{1}{2}\left\Vert \mathbf{{x}}-\left(\bar{\mathbf{{x}}}-\underset{\alpha\notin S_{n,w}}{\sum}\mathbf{{z}}_{\alpha}^{n,w}\right)\right\Vert ^{2}.\end{array}\label{eq:primal-subpblm}
\end{equation}
Conversely, if $\mathbf{{x}}^{n,w}$ solves \eqref{eq:primal-subpblm}
with the dual variables $\{\tilde{\mathbf{{z}}}_{\alpha}^{n,w}\}_{\alpha\in S_{n,w}}$
satisfying 
\begin{equation}
\begin{array}{c}
\tilde{\mathbf{{z}}}_{\alpha}^{n,w}\in\partial\mathbf{f}_{\alpha,n,w}(\mathbf{{x}}^{n,w})\mbox{ and }\mathbf{{x}}^{n,w}-\bar{\mathbf{{x}}}+\underset{\alpha\notin S_{n,w}}{\sum}\mathbf{{z}}_{\alpha}^{n,w}+\underset{\alpha\in S_{n,w}}{\sum}\tilde{\mathbf{{z}}}_{\alpha}^{n,w}=0,\end{array}\label{eq:primal-optim-cond}
\end{equation}
then $\{\tilde{\mathbf{{z}}}_{\alpha}^{n,w}\}_{\alpha\in S_{n,w}}$
solves \eqref{eq:Dykstra-min-subpblm}. 
\end{prop}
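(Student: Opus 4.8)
The plan is to recognize this as nothing more than a verification of the Fenchel--Rockafellar optimality conditions, the sole analytic ingredient being the conjugate subgradient inversion formula $\mathbf{{z}}\in\partial g(\mathbf{{x}})\iff\mathbf{{x}}\in\partial g^{*}(\mathbf{{z}})$, valid for any proper closed convex $g$; each $\mathbf{f}_{\alpha,n,w}(\cdot)$ is of this type. First I would fix notation by writing $S:=S_{n,w}$ and $\mathbf{{p}}:=\bar{\mathbf{{x}}}-\sum_{\alpha\notin S}\mathbf{{z}}_{\alpha}^{n,w}$; using line 14 (so that $\mathbf{{z}}_{\alpha}^{n,w}=\mathbf{{z}}_{\alpha}^{n,w-1}$ for $\alpha\notin S$), this $\mathbf{{p}}$ is exactly the common center appearing both in the quadratic of \eqref{eq:Dykstra-min-subpblm} and in that of \eqref{eq:primal-subpblm}. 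The identity that glues the two formulations together is \eqref{eq:x-from-v-A}, which, after splitting the sum defining $\mathbf{{v}}_{A}$ over $\alpha\in S$ and $\alpha\notin S$, reads $\mathbf{{x}}^{n,w}=\mathbf{{p}}-\sum_{\alpha\in S}\mathbf{{z}}_{\alpha}^{n,w}$.

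For the forward direction I would apply Fermat's rule blockwise to \eqref{eq:Dykstra-min-subpblm}: differentiating the quadratic in $\mathbf{{z}}_{\beta}$ yields $\mathbf{{p}}-\sum_{\alpha\in S}\mathbf{{z}}_{\alpha}^{n,w}\in\partial\mathbf{f}_{\beta,n,w}^{*}(\mathbf{{z}}_{\beta}^{n,w})$ for each $\beta\in S$. The left side equals $\mathbf{{x}}^{n,w}$ by the identity above, so inversion gives $\mathbf{{z}}_{\beta}^{n,w}\in\partial\mathbf{f}_{\beta,n,w}(\mathbf{{x}}^{n,w})$ for every $\beta\in S$. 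Summing these memberships and invoking the always-valid inclusion $\sum_{\beta\in S}\partial\mathbf{f}_{\beta,n,w}\subseteq\partial(\sum_{\beta\in S}\mathbf{f}_{\beta,n,w})$, together with $\mathbf{{p}}-\mathbf{{x}}^{n,w}=\sum_{\beta\in S}\mathbf{{z}}_{\beta}^{n,w}$, produces $\mathbf{{p}}-\mathbf{{x}}^{n,w}\in\partial(\sum_{\beta\in S}\mathbf{f}_{\beta,n,w})(\mathbf{{x}}^{n,w})$, which is precisely Fermat's rule for \eqref{eq:primal-subpblm} at $\mathbf{{x}}^{n,w}$. Strong convexity of that objective then identifies $\mathbf{{x}}^{n,w}$ as its unique minimizer.

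For the converse I am handed the multipliers $\{\tilde{\mathbf{{z}}}_{\alpha}^{n,w}\}_{\alpha\in S}$ of \eqref{eq:primal-optim-cond}, so no sum rule is needed at all. The stationarity equation there rearranges to $\mathbf{{x}}^{n,w}=\mathbf{{p}}-\sum_{\alpha\in S}\tilde{\mathbf{{z}}}_{\alpha}^{n,w}$, while $\tilde{\mathbf{{z}}}_{\alpha}^{n,w}\in\partial\mathbf{f}_{\alpha,n,w}(\mathbf{{x}}^{n,w})$ inverts to $\mathbf{{x}}^{n,w}\in\partial\mathbf{f}_{\alpha,n,w}^{*}(\tilde{\mathbf{{z}}}_{\alpha}^{n,w})$ for each $\alpha\in S$; substituting the former into the latter reproduces the blockwise Fermat's rule for \eqref{eq:Dykstra-min-subpblm}, so $\{\tilde{\mathbf{{z}}}_{\alpha}^{n,w}\}_{\alpha\in S}$ solves the dual subproblem by convexity. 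There is no genuine obstacle here; the only points needing care are the bookkeeping that lets the single vector $\mathbf{{p}}$ serve as the shared center of both quadratics, and the observation that only the trivial half of the subdifferential sum rule is invoked, so that no constraint qualification on the $\mathbf{f}_{\alpha,n,w}(\cdot)$ is required.
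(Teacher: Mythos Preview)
Your argument is correct and is exactly the standard Fenchel--Rockafellar optimality-condition computation that the paper has in mind; the paper itself omits the proof, simply noting that it ``is easily extended from \cite{Pang_Dyk_spl}'', and indeed the forward direction you wrote out reappears verbatim later in the paper as the proof of Claim~\ref{claim:Fenchel-duality}. Your care in pointing out that only the trivial inclusion of the subdifferential sum rule is needed, and that separability of the nonsmooth part makes the blockwise Fermat condition equivalent to the joint one, is appropriate and leaves no gap.
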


\begin{rem}
\label{rem:Irrelevance-of-z}(Irrelevance of $\mathbf{{z}}_{((i,j),\bar{k})}^{n,w}$)
In \cite{Pang_Dist_Dyk}, we explained that each node $i\in\mathcal{V}$
needs to keep track of just $[\bar{\mathbf{{x}}}-\mathbf{{v}}_{H}^{n,w}]_{i}\in\mathbb{R}^{m}$
and $[\mathbf{{z}}_{i}^{n,w}]_{i}\in\mathbb{R}^{m}$, and does not
have to keep track of any part of the vectors $\mathbf{{z}}_{((i,j),\bar{k})}^{n,w}\in\mathbb{R}^{m}$
for $((i,j),\bar{k})\in\bar{\mathcal{E}}$. The same is true for Algorithms
\ref{alg:Ext-Dyk} and \ref{alg:subdiff-subalg} here. The reason
for introducing $\mathbf{{z}}_{((i,j),\bar{k})}^{n,w}\in[\mathbb{R}^{m}]^{|\mathcal{V}|}$
is that the proof of the convergence result in Theorem \ref{thm:convergence}
needs \eqref{eq:biggest-formula}, which in turn needs the variables
$\mathbf{{z}}_{((i,j),\bar{k})}^{n,w}$. 
\end{rem}

\begin{example}
\label{exa:partial-comms}(Partial communication of data) Fix some
$(i,j)\in\mathcal{E}$ and some set $\bar{K}\subset\{1,\dots,m\}$.
Suppose the set $S_{n,w}$ is chosen to be $\{((i,j),\bar{k}):\bar{k}\in\bar{K}\}$.
Then $\mathbf{{x}}^{n,w}$ is obtained from \eqref{eq:primal-subpblm},
which tells that $\mathbf{{x}}^{n,w}$ is the projection of $[\bar{\mathbf{{x}}}-\sum_{\alpha\notin S_{n,w}}\mathbf{{z}}_{\alpha}^{n,w-1}]$
onto $\cap_{((i,j),\bar{k}):\bar{k}\in\bar{K}}H_{((i,j),\bar{k})}$.
Since $H_{((i,j),\bar{k})}$ are all affine spaces with normals $\mathbf{{z}}_{((i,j),\bar{k})}^{n,w-1}$,
$\mathbf{{x}}^{n,w}$ is also the projection of 
\[
\begin{array}{c}
\bar{\mathbf{{x}}}-\underset{\alpha\notin S_{n,w}}{\sum}\mathbf{{z}}_{\alpha}^{n,w-1}-\underset{\alpha\in S_{n,w}}{\sum}\mathbf{{z}}_{\alpha}^{n,w-1},\end{array}
\]
or $\mathbf{{x}}^{n,w-1}$, onto $\cap_{((i,j),\bar{k}):\bar{k}\in\bar{K}}H_{((i,j),\bar{k})}$.
This gives 
\[
[[\mathbf{{x}}^{n,w}]_{i'}]_{k'}=\begin{cases}
\frac{1}{2}([[\mathbf{{x}}^{n,w-1}]_{i}]_{\bar{k}}+[[\mathbf{{x}}^{n,w-1}]_{j}]_{\bar{k}}) & \mbox{ if }i'\in\{i,j\}\mbox{ and }\bar{k}\in\bar{K}\\{}
[[\mathbf{{x}}^{n,w-1}]_{i'}]_{k'} & \mbox{ otherwise.}
\end{cases}
\]
As mentioned in Remark \ref{rem:Irrelevance-of-z}, there is no need
to keep track of the dual variables $\mathbf{{z}}_{((i,j),\bar{k})}^{n,w}$
to run Algorithm \ref{alg:Ext-Dyk}. So the larger $\bar{K}$ is,
the more variables are updated. Thus in Algorithm \ref{alg:Ext-Dyk},
computations can be performed continuously even when not all the data
is communicated. In other words, communications will not be a bottleneck
for Algorithm \ref{alg:Ext-Dyk}.
\end{example}

\subsection{Subroutine for subdifferentiable functions}

If $\mathcal{V}_{4}=\emptyset$, then Algorithm \ref{alg:Ext-Dyk}
corresponds mostly to the algorithm in \cite{Pang_Dist_Dyk} because
there are no subdifferentiable functions. In this subsection, we present
and derive Algorithm \ref{alg:subdiff-subalg}, which is a subroutine
within Algorithm \ref{alg:Ext-Dyk} to handle subdifferentiable functions. 

We state some notation necessary for further discussions. For any
$\alpha\in\bar{\mathcal{E}}\cup\mathcal{V}$ and $n\in\{1,2,\dots\}$,
let $p(n,\alpha)$ be 
\[
p(n,\alpha):=\max\{w':w'\leq\bar{w},\alpha\in S_{n,w'}\}.
\]
In other words, $p(n,\alpha)$ is the index $w'$ such that $\alpha\in S_{n,w'}$
but $\alpha\notin S_{n,k}$ for all $k\in\{w'+1,\dots,\bar{w}\}$.
It follows from line 14 in Algorithm \ref{alg:Ext-Dyk} that 
\begin{equation}
\mathbf{{z}}_{\alpha}^{n,p(n,\alpha)}=\mathbf{{z}}_{\alpha}^{n,p(n,\alpha)+1}=\cdots=\mathbf{{z}}_{\alpha}^{n,\bar{w}}\mbox{ for all }\alpha\in\bar{\mathcal{E}}\cup\mathcal{V}.\label{eq:stagnant-indices}
\end{equation}
Moreover, $((i,j),\bar{k})\notin\bar{\mathcal{E}}_{n}$ implies $((i,j),\bar{k})\notin S_{n,w}$
for all $w\in\{1,\dots,\bar{w}\}$, so 
\begin{equation}
0\overset{\scriptsize\eqref{eq:reset-z-i-j-1}}{=}\mathbf{{z}}_{((i,j),\bar{k})}^{n,0}=\mathbf{{z}}_{((i,j),\bar{k})}^{n,1}=\cdots=\mathbf{{z}}_{((i,j),\bar{k})}^{n,\bar{w}}\mbox{ for all }((i,j),\bar{k})\in\bar{\mathcal{E}}\backslash\bar{\mathcal{E}}_{n}.\label{eq:zero-indices}
\end{equation}

We present Algorithm \vref{alg:subdiff-subalg}. 

\begin{algorithm}[!h]
\begin{lyxalgorithm}
\label{alg:subdiff-subalg}(Subalgorithm for subdifferentiable functions)
This algorithm is run when line 9 of Algorithm \ref{alg:Ext-Dyk}
is reached. Suppose $S_{n,w}\subset\mathcal{V}_{4}$ and Assumption
\ref{assu:to-start-subalg} holds.

01 For each $i\in S_{n,w}$ 

02 $\quad$For $\tilde{f}_{i,n,w-1}(\cdot)$ defined in \eqref{eq:linearize-f-i-n-w},
consider 
\begin{equation}
\begin{array}{c}
\underset{x\in\mathbb{R}^{m}}{\min}\left[\frac{1}{2}\|[\bar{\mathbf{{x}}}-\mathbf{{v}}_{H}^{n,w-1}]_{i}-x\|^{2}+\max\{f_{i,n,w-1},\tilde{f}_{i,n,w-1}\}(x)\right],\end{array}\label{eq:alg-primal-subpblm}
\end{equation}

03 $\quad$Let the primal and dual solutions of \eqref{eq:alg-primal-subpblm}
be $x_{i}^{+}$ and $z_{i}^{+}$ 

04 $\quad$Define $f_{i,n,w}:\mathbb{R}^{m}\to\mathbb{R}$ to be the
affine function 
\begin{equation}
f_{i,n,w}(x):=f_{i,n,w-1}(x_{i}^{+})+\langle x-x_{i}^{+},[\bar{\mathbf{{x}}}-\mathbf{{v}}_{H}^{n,w-1}]_{i}-x_{i}^{+}\rangle.\label{eq:def-f-i-n-w}
\end{equation}

05 $\quad$In other words, $f_{i,n,w}(\cdot)$ is chosen such that
the 

$\qquad\qquad$primal and dual optimizers to \eqref{eq:alg-primal-subpblm}
coincide with that of 
\begin{equation}
\begin{array}{c}
\underset{x\in\mathbb{R}^{m}}{\min}\left[\frac{1}{2}\|[\bar{\mathbf{{x}}}-\mathbf{{v}}_{H}^{n,w-1}]_{i}-x\|^{2}+f_{i,n,w}(x)\right].\end{array}\label{eq:finw-design}
\end{equation}

06 $\quad$Define the function $\mathbf{f}_{i,n,w}:[\mathbb{R}^{m}]^{|\mathcal{V}|}\to\mathbb{R}$
and 

$\qquad\qquad$the dual vector $z_{i}^{n,w}\in[\mathbb{R}^{m}]^{|\mathcal{V}|}$
to be 
\begin{equation}
\mathbf{f}_{i,n,w}(\mathbf{{x}}):=f_{i,n,w}(\mathbf{{x}}_{i})\mbox{ and }[\mathbf{{z}}_{i}^{n,w}]_{j}:=\begin{cases}
\mathbf{{z}}_{i}^{+} & \mbox{ if }j=i\\
0 & \mbox{ if }j\neq i.
\end{cases}\label{eq:def-z-i}
\end{equation}

07 End for 

08 For all $i\in\mathcal{V}_{4}\backslash S_{n,w}$, $\mathbf{f}_{i,n,w}(\cdot)=\mathbf{f}_{i,n,w-1}(\cdot)$.
\end{lyxalgorithm}

\end{algorithm}
We make three assumptions that will be needed for the proof of convergence
of Theorem \ref{thm:convergence}. 
\begin{assumption}
\label{assu:to-start-subalg}(Start of Algorithm \ref{alg:subdiff-subalg})
Recall that at the start of Algorithm \ref{alg:subdiff-subalg}, $S_{n,w}\subset\mathcal{V}_{4}$.
We make three assumptions.

\begin{enumerate}
\item Suppose $(n,w)$ is such that $w>1$ and $S_{n,w}\subset\mathcal{V}_{4}$
so that Algorithm \ref{alg:subdiff-subalg} is invoked. Then for all
$i\in S_{n,w}$, $[\mathbf{{z}}_{i}^{n,w-1}]_{i}\in\mathbb{R}^{m}$
is the optimizer to the problem 
\begin{equation}
\begin{array}{c}
\underset{z\in\mathbb{R}^{m}}{\min}\frac{1}{2}\|[\bar{\mathbf{{x}}}-\mathbf{{v}}_{H}^{n,w-1}]_{i}-z\|^{2}+f_{i,n,w-1}^{*}(z).\end{array}\label{eq:multi-node-start}
\end{equation}
In other words, suppose $w_{i}\geq1$ is the largest $w'$ such that
$i\in S_{n,w'}$ and $i\notin S_{n,\tilde{w}}$ for all $\tilde{w}\in\{w'+1,w'+2,\dots,w-1\}$.
Then for all $\tilde{w}\in\{w_{i}+1,\dots,w-1\}$, $(e,\bar{k})\notin S_{n,\tilde{w}}$
if $i$ is an endpoint of $e$. 
\item Suppose that for all $i\in\mathcal{V}_{4}$ and $\tilde{w}\in\{p(n,i)+1,\dots,\bar{w}\}$,
$(e,\bar{k})\notin S_{n,\tilde{w}}$ if $i$ is an endpoint of $e$.
(This implies $\mathbf{x}_{i}^{n,p(n,i)}=\mathbf{x}_{i}^{n,\bar{w}}$.)
\item Suppose that $S_{n,1}=\mathcal{V}_{4}$ for all $n>1$.
\end{enumerate}
\end{assumption}

\begin{rem}
We need Assumption \ref{assu:to-start-subalg}(1) for Proposition
\ref{prop:quad-dec-case-2}, which is in turn needed for the proof
of Theorem \ref{thm:convergence}(i). We need Assumption \ref{assu:to-start-subalg}(2)
so that the analogue of Lemma \ref{lem:alpha-recurrs}(1) holds, which
in turn is used in the proof of Theorem \ref{thm:convergence}(iv).
Also, Assumption \ref{assu:to-start-subalg}(1) is seen to be satisfied
if $S_{n,w}\subset S_{n,w-1}$ if $S_{n,w}\subset\mathcal{V}_{4}$.

\begin{rem}
(On the problem \eqref{eq:alg-primal-subpblm}) Consider the case
where $S_{n,w}=\{i\}$ first, where $i\in\mathcal{V}_{4}$. If $i$
were in $\mathcal{V}\backslash\mathcal{V}_{4}$ instead, $\mathbf{{z}}_{i}^{n,w}$
is the minimizer of 
\begin{equation}
\begin{array}{c}
\underset{\mathbf{{z}}_{i}\in[\mathbb{R}^{m}]^{|\mathcal{V}|}}{\min}\frac{1}{2}\bigg\|\bar{\mathbf{{x}}}-\mathbf{{v}}_{H}^{n,w-1}-\underset{j\in\mathcal{V}\backslash\{i\}}{\sum}\mathbf{{z}}_{j}^{n,w-1}-\mathbf{{z}}_{i}\bigg\|^{2}+\mathbf{f}_{i}^{*}(\mathbf{{z}}_{i}).\end{array}\label{eq:block-dual}
\end{equation}
When $i\in\mathcal{V}_{4}$, we use $f_{i,n,w-1}(\cdot)$, where $f_{i,n,w-1}(\cdot)\leq f_{i}(\cdot)$,
instead of $f_{i}(\cdot)$. This gives $f_{i,n,w}^{*}(\cdot)\geq f_{i}^{*}(\cdot)$.
Instead of \eqref{eq:block-dual}, we now have 
\begin{equation}
\begin{array}{c}
\underset{\mathbf{{z}}_{i}\in[\mathbb{R}^{m}]^{|\mathcal{V}|}}{\min}\frac{1}{2}\bigg\|\bar{\mathbf{{x}}}-\mathbf{{v}}_{H}^{n,w-1}-\underset{j\in\mathcal{V}\backslash\{i\}}{\sum}\mathbf{{z}}_{j}^{n,w-1}-\mathbf{{z}}_{i}\bigg\|^{2}+\mathbf{f}_{i,n,w-1}^{*}(\mathbf{{z}}_{i}).\end{array}\label{eq:dual-of-approx}
\end{equation}
The dual of \eqref{eq:dual-of-approx} is (up to a constant independent
of $\mathbf{{x}}$) 
\begin{equation}
\begin{array}{c}
\underset{\mathbf{{x}}\in[\mathbb{R}^{m}]^{|\mathcal{V}|}}{\min}\frac{1}{2}\bigg\|\bar{\mathbf{{x}}}-\mathbf{{v}}_{H}^{n,w-1}-\underset{j\in\mathcal{V}\backslash\{i\}}{\sum}\mathbf{{z}}_{j}^{n,w-1}-\mathbf{{x}}\bigg\|^{2}+\mathbf{f}_{i,n,w-1}(\mathbf{{x}}).\end{array}\label{eq:dual-of-dual-subpblm}
\end{equation}
Since $\mathbf{{z}}_{i}^{n,w-1}\in[\mathbb{R}^{m}]^{|\mathcal{V}|}$
and $\mathbf{{z}}_{i}^{n,w}\in[\mathbb{R}^{m}]^{|\mathcal{V}|}$ are
such that the components in $\mathcal{V}\backslash\{i\}$ are all
zero by Proposition \ref{prop:sparsity}, the problem \eqref{eq:dual-of-dual-subpblm}
reduces to 
\begin{equation}
\begin{array}{c}
\underset{x\in\mathbb{R}^{m}}{\min}\frac{1}{2}\|[\bar{\mathbf{{x}}}-\mathbf{{v}}_{H}^{n,w-1}]_{i}-x\|^{2}+f_{i,n,w-1}(x).\end{array}\label{eq:unmod-alg-primal}
\end{equation}
Suppose that the minimizer of \eqref{eq:dual-of-approx} is $\mathbf{{z}}_{i}^{n,w-1}$,
which is the case when Assumption \ref{assu:to-start-subalg}(1) holds.
Then the minimizer of \eqref{eq:unmod-alg-primal} is $[\mathbf{{x}}^{n,w-1}]_{i}$,
which is also $[\bar{\mathbf{{x}}}-\mathbf{{v}}_{H}^{n,w-1}-\mathbf{{z}}_{i}^{n,w-1}]_{i}$
by \eqref{eq_m:all_acronyms}. Construct $\tilde{f}_{i,n,w-1}:\mathbb{R}^{m}\to\mathbb{R}$
by 
\begin{equation}
\tilde{f}_{i,n,w-1}(x):=f_{i}([\bar{\mathbf{{x}}}-\mathbf{{v}}_{H}^{n,w-1}-\mathbf{{z}}_{i}^{n,w-1}]_{i})+\langle s,x-[\bar{\mathbf{{x}}}-\mathbf{{v}}_{H}^{n,w-1}-\mathbf{{z}}_{i}^{n,w-1}]_{i}\rangle,\label{eq:linearize-f-i-n-w}
\end{equation}
where $s\in\partial f_{i}([\bar{\mathbf{{x}}}-\mathbf{{v}}_{H}^{n,w-1}-\mathbf{{z}}_{i}^{n,w-1}]_{i})$.
The primal problem that we now consider is \eqref{eq:alg-primal-subpblm}.
$\hfill\Delta$
\end{rem}

\end{rem}

\begin{rem}
(On the condition $S_{n,1}=V_{4}$) Throughout this paper, we assumed
$S_{n,1}=V_{4}$ in Assumption \ref{assu:to-start-subalg}. Algorithm
\ref{alg:Ext-Dyk} with this condition would not be truly asynchronous,
but it is relatively easy to enforce this condition. One way to enforce
this condition is to use a global clock. Another way to enforce this
condition is to use the sparsity of $\mathbf{z}_{\alpha}$ in Proposition
\ref{prop:sparsity}. Suppose that $\{S_{n,w}\}_{w=1}^{\bar{w}}$
is such that for all $i\in V_{4}$, $S_{n,w_{i}}=\{i\}$ for some
$w_{i}\in\{1,\dots,\bar{w}\}$. Suppose also that for all $i,j\in V_{4}$
such that $w_{i}<w_{j}$:

\begin{itemize}
\item [($\star$)]There are no $(e,k)\in\bar{E}$ such that $i$ and $j$
are the two endpoints of $e$ and $(e,k)\in S_{n,w'}$ for some $w'$
such that $w_{i}<w'<w_{j}$. 
\end{itemize}
If condition ($\star$) holds for some $i,j\in V_{4}$, then the sparsity
of $\mathbf{z}_{\alpha}^{n,w}$ implies that if we changed from $S_{n,w_{i}}=\{i\}$
and $S_{n,w_{j}}=\{j\}$ to $S_{n,w_{i}}=\{i,j\}$ and $S_{n,w_{j}}=\emptyset$,
then the iterates $\{\mathbf{x}^{n,w}\}_{w}$ obtained will remain
equivalent. It is possible to ensure ($\star$) for all $i,j\in V_{4}$
using a signal from a fixed node in $V$ propagated as computations
in the algorithm are carried out. 
\end{rem}

As mentioned in Remark \ref{rem:min-max-of-2-quads}, the problem
\eqref{eq:alg-primal-subpblm} is still easy to solve if $f_{i,n,w-1}(\cdot)$
and $\tilde{f}_{i,n,w-1}(\cdot)$ are affine functions with the known
parameters $[\bar{\mathbf{{x}}}-\mathbf{{v}}_{H}^{n,w-1}]_{i}$ and
$\mathbf{{z}}_{i}^{n,w-1}$. 

Next, for the primal optimizer $x_{i}^{+}$ defined in line 3 of Algorithm
\ref{alg:subdiff-subalg}, we can construct the affine function $f_{i,n,w}:\mathbb{R}^{m}\to\mathbb{R}$
to be such that 
\[
\begin{array}{c}
\underset{x\in\mathbb{R}^{m}}{\min}\frac{1}{2}\|[\bar{\mathbf{{x}}}-\mathbf{{v}}_{H}^{n,w-1}]_{i}-x\|^{2}+f_{i,n,w}(x)\end{array}
\]
has the same minimizer and objective value as \eqref{eq:alg-primal-subpblm}.
The function $f_{i,n,w}(\cdot)$ can be checked to be \eqref{eq:def-f-i-n-w}.
It is clear to see that $f_{i,n,w}(\cdot)\leq\max\{f_{i,n,w-1}(\cdot),\tilde{f}_{i,n,w-1}(\cdot)\}$.
Since both $f_{i,n,w-1}(\cdot)$ and $\tilde{f}_{i,n,w-1}(\cdot)$
are both by definition lower approximates of $f_{i}(\cdot)$, $f_{i,n,w}(\cdot)$
will also be a lower approximate of $f_{i}(\cdot)$. The function
$\mathbf{f}_{i,n,w}:[\mathbb{R}^{m}]^{|\mathcal{V}|}\to\mathbb{R}$
is constructed to be 
\[
\mathbf{f}_{i,n,w}(\mathbf{{x}})=f_{i,n,w}([\mathbf{{x}}]_{i}).
\]
The $\mathbf{{z}}_{i}^{n,w}\in[\mathbb{R}^{m}]^{|\mathcal{V}|}$ defined
by \eqref{eq:def-z-i} would be the optimal solution of the dual problem
\[
\begin{array}{c}
\underset{\mathbf{{z}}_{i}\in[\mathbb{R}^{m}]^{|\mathcal{V}|}}{\min}\frac{1}{2}\bigg\|\bar{\mathbf{{x}}}-\mathbf{{v}}_{H}^{n,w-1}-\underset{j\in\mathcal{V}\backslash\{i\}}{\sum}\mathbf{{z}}_{j}^{n,w-1}-\mathbf{{z}}_{i}\bigg\|^{2}+\mathbf{f}_{i,n,w}^{*}(\mathbf{{z}}_{i}).\end{array}
\]

\begin{rem}
(Similarities to the one node case) Note that the problem \eqref{eq:def-q}
corresponds to \eqref{eq:multi-node-start}, the function \eqref{eq:def-h-tilde-w}
to \eqref{eq:linearize-f-i-n-w}, the problem \eqref{eq:max-quad}
to \eqref{eq:alg-primal-subpblm}, and the function $h_{w+1}(\cdot)$
in line 9 of Algorithm \ref{alg:basic-dual-ascent} to \eqref{eq:def-f-i-n-w}. 
\end{rem}

One way to understand Proposition \ref{prop:P-D-of-one-block} is
to see that any change in the primal objective value gives the same
change in the dual objective value. We have the following result. 
\begin{prop}
\label{prop:quad-dec-case-2}Suppose $(n,w)$ is such that $w>1$
and $S_{n,w}\subset\mathcal{V}_{4}$ so that Algorithm \ref{alg:subdiff-subalg}
is run, and Assumption \ref{assu:to-start-subalg}(1) holds. Then
we have 
\begin{equation}
\begin{array}{c}
\frac{1}{2}\|\mathbf{{x}}^{n,w}-\mathbf{{x}}^{n,w-1}\|^{2}\leq F_{n,w}(\{\mathbf{{z}}_{\alpha}^{n,w}\}_{\alpha\in\bar{\mathcal{E}}\cup\mathcal{V}})-F_{n,w-1}(\{\mathbf{{z}}_{\alpha}^{n,w-1}\}_{\alpha\in\bar{\mathcal{E}}\cup\mathcal{V}}).\end{array}\label{eq:quad-dec-case-2}
\end{equation}
Similarly, if Assumption \ref{assu:to-start-subalg}(2) and (3) hold,
then 
\begin{equation}
\begin{array}{c}
\frac{1}{2}\|\mathbf{{x}}^{n,1}-\mathbf{{x}}^{n,0}\|^{2}\leq F_{n,1}(\{\mathbf{{z}}_{\alpha}^{n,1}\}_{\alpha\in\bar{\mathcal{E}}\cup\mathcal{V}})-F_{n,0}(\{\mathbf{{z}}_{\alpha}^{n,0}\}_{\alpha\in\bar{\mathcal{E}}\cup\mathcal{V}}).\end{array}\label{eq:quad-dec-case-3}
\end{equation}
\end{prop}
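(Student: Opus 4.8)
The plan is to expand the right-hand side of \eqref{eq:quad-dec-case-2} algebraically and show that everything beyond the target quantity $\frac{1}{2}\|\mathbf{{x}}^{n,w}-\mathbf{{x}}^{n,w-1}\|^{2}$ is nonnegative. Writing $Q(\mathbf{{v}}):=-\frac{1}{2}\|\bar{\mathbf{{x}}}-\mathbf{{v}}\|^{2}+\frac{1}{2}\|\bar{\mathbf{{x}}}\|^{2}$ and $\mathbf{{v}}_{A}^{n,w}:=\sum_{\alpha\in\bar{\mathcal{E}}\cup\mathcal{V}}\mathbf{{z}}_{\alpha}^{n,w}$, the value \eqref{eq:Dykstra-dual-defn-1} splits as $F_{n,w}(\{\mathbf{{z}}_{\alpha}^{n,w}\})=Q(\mathbf{{v}}_{A}^{n,w})-\sum_{\alpha}\mathbf{f}_{\alpha,n,w}^{*}(\mathbf{{z}}_{\alpha}^{n,w})$. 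By line 14 of Algorithm \ref{alg:Ext-Dyk} every coordinate outside $S_{n,w}$ is frozen, and by \eqref{eq:h-a-n-w-eq-h-a} only the functions indexed by $\mathcal{V}_{4}\supset S_{n,w}$ can change; hence in the difference $F_{n,w}(\{\mathbf{{z}}_{\alpha}^{n,w}\})-F_{n,w-1}(\{\mathbf{{z}}_{\alpha}^{n,w-1}\})$ only the blocks $\alpha\in S_{n,w}$ survive, and the quadratic part contributes $Q(\mathbf{{v}}_{A}^{n,w})-Q(\mathbf{{v}}_{A}^{n,w-1})$.

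Using $\bar{\mathbf{{x}}}-\mathbf{{v}}_{A}^{n,w}=\mathbf{{x}}^{n,w}$ from \eqref{eq:x-from-v-A}, a completion of squares gives $Q(\mathbf{{v}}_{A}^{n,w})-Q(\mathbf{{v}}_{A}^{n,w-1})=\frac{1}{2}\|\mathbf{{x}}^{n,w}-\mathbf{{x}}^{n,w-1}\|^{2}+\langle\mathbf{{x}}^{n,w},\mathbf{{x}}^{n,w-1}-\mathbf{{x}}^{n,w}\rangle$, and since $\mathbf{{x}}^{n,w-1}-\mathbf{{x}}^{n,w}=\sum_{\alpha\in S_{n,w}}(\mathbf{{z}}_{\alpha}^{n,w}-\mathbf{{z}}_{\alpha}^{n,w-1})$, the cross term distributes over the blocks. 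Collecting terms block by block reduces the claim to showing, for each $i\in S_{n,w}$, that $[\langle\mathbf{{x}}^{n,w},\mathbf{{z}}_{i}^{n,w}\rangle-\mathbf{f}_{i,n,w}^{*}(\mathbf{{z}}_{i}^{n,w})]-[\langle\mathbf{{x}}^{n,w},\mathbf{{z}}_{i}^{n,w-1}\rangle-\mathbf{f}_{i,n,w-1}^{*}(\mathbf{{z}}_{i}^{n,w-1})]\geq0$. The first bracket equals $\mathbf{f}_{i,n,w}(\mathbf{{x}}^{n,w})$ with equality in Fenchel--Young: by the construction in lines 4--6 of Algorithm \ref{alg:subdiff-subalg} the optimizers of \eqref{eq:alg-primal-subpblm} and \eqref{eq:finw-design} coincide, so $\mathbf{{z}}_{i}^{n,w}$ solves \eqref{eq:Dykstra-min-subpblm} for $\mathbf{f}_{i,n,w}$ and Proposition \ref{prop:subproblems} yields $\mathbf{{z}}_{i}^{n,w}\in\partial\mathbf{f}_{i,n,w}(\mathbf{{x}}^{n,w})$. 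The second bracket is $\leq\mathbf{f}_{i,n,w-1}(\mathbf{{x}}^{n,w})$ by the plain Fenchel--Young inequality, which is meaningful because Assumption \ref{assu:to-start-subalg}(1) places $\mathbf{{z}}_{i}^{n,w-1}$ in the domain of $\mathbf{f}_{i,n,w-1}^{*}$ as the optimizer of \eqref{eq:multi-node-start}.

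Thus the residual is bounded below by $\sum_{i\in S_{n,w}}[\mathbf{f}_{i,n,w}(\mathbf{{x}}^{n,w})-\mathbf{f}_{i,n,w-1}(\mathbf{{x}}^{n,w})]$, and the crux is that this vanishes. By the sparsity in Proposition \ref{prop:sparsity}, the block subproblem \eqref{eq:Dykstra-min-subpblm} decouples across the distinct coordinates $i\in S_{n,w}\subset\mathcal{V}_{4}$, so $[\mathbf{{x}}^{n,w}]_{i}=x_{i}^{+}$ is exactly the primal optimizer produced in line 3 of Algorithm \ref{alg:subdiff-subalg}; evaluating the construction \eqref{eq:def-f-i-n-w} at $x=x_{i}^{+}$ gives $f_{i,n,w}(x_{i}^{+})=f_{i,n,w-1}(x_{i}^{+})$, i.e. $\mathbf{f}_{i,n,w}(\mathbf{{x}}^{n,w})=\mathbf{f}_{i,n,w-1}(\mathbf{{x}}^{n,w})$. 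This proves \eqref{eq:quad-dec-case-2}.

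For \eqref{eq:quad-dec-case-3} the computation is word for word the same, provided the single starting hypothesis used above---that $\mathbf{{z}}_{i}^{n,0}$ is the optimizer of the block problem \eqref{eq:multi-node-start} for the data $[\bar{\mathbf{{x}}}-\mathbf{{v}}_{H}^{n,0}]_{i}$ and $f_{i,n,0}$---is re-established at $(n,0)$. Here Assumptions \ref{assu:to-start-subalg}(2) and (3) take over the role of (1): (3) forces $S_{n,1}=\mathcal{V}_{4}$ so the first inner step is again a pure $\mathcal{V}_{4}$-block update, while (2) ensures that after node $i$ was last touched in iteration $n-1$ no incident edge was updated, so that $[\mathbf{{v}}_{H}]_{i}$ is unchanged and $\mathbf{{z}}_{i}^{n,0}=\mathbf{{z}}_{i}^{n-1,\bar{w}}=\mathbf{{z}}_{i}^{n-1,p(n-1,i)}$ (via \eqref{eq:stagnant-indices} and lines 16--17) remains the required block optimizer. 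I expect the main obstacle to be precisely this bookkeeping of the \emph{changing functions} rather than merely changing variables: unlike ordinary Dykstra, each step replaces $\mathbf{f}_{i,n,w-1}$ by a different affine minorant $\mathbf{f}_{i,n,w}$, and the argument succeeds only because \eqref{eq:def-f-i-n-w} is engineered to make the old and new minorants agree at the new primal iterate $x_{i}^{+}$, which is also what keeps $\mathbf{f}_{i,n,w-1}^{*}(\mathbf{{z}}_{i}^{n,w-1})$ finite so that Fenchel--Young may be invoked.
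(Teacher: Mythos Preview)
Your argument is correct. The two proofs rest on the same two facts---Fenchel--Young equality at the new iterate (Claim~\ref{claim:Fenchel-duality}) and the engineered identity $f_{i,n,w}(x_i^{+})=f_{i,n,w-1}(x_i^{+})$ from \eqref{eq:def-f-i-n-w}---but organize them differently. The paper works one coordinate $i$ at a time: it passes to the primal via Proposition~\ref{prop:P-D-of-one-block}, uses that $[\mathbf{x}^{n,w-1}]_{i}$ minimizes $f_{i,n,w-1}(\cdot)+\tfrac{1}{2}\|[\bar{\mathbf{x}}-\mathbf{v}_{H}^{n,w-1}]_{i}-\cdot\|^{2}$ (this is exactly where Assumption~\ref{assu:to-start-subalg}(1) enters) to obtain the strong-convexity inequality \eqref{eq:for-beta-chain}, and then converts back to the dual quantity $\beta_{i}$ to sum. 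You instead stay on the dual side throughout: one polarization of the quadratic $Q$ followed by the plain Fenchel--Young inequality on the stale block $(\mathbf{z}_{i}^{n,w-1},\mathbf{f}_{i,n,w-1}^{*})$ does the job globally. Your route avoids the primal detour through Proposition~\ref{prop:P-D-of-one-block}, and in fact uses Assumption~\ref{assu:to-start-subalg}(1) only to certify that $\mathbf{f}_{i,n,w-1}^{*}(\mathbf{z}_{i}^{n,w-1})<\infty$ (so that $F_{n,w-1}>-\infty$), whereas the paper needs it structurally for \eqref{eq:for-beta-chain}. The paper's coordinate-wise presentation, on the other hand, makes the per-node ascent $\beta_{i}\geq\tfrac{1}{2}\|[\mathbf{z}_{i}^{n,w}-\mathbf{z}_{i}^{n,w-1}]_{i}\|^{2}$ explicit, which is convenient for later node-by-node estimates.
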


\begin{proof}
Recall Proposition \ref{prop:sparsity} on the sparsity of the $\mathbf{{z}}_{i}^{n,w}\in[\mathbb{R}^{m}]^{|\mathcal{V}|}$.
Recall that in line 3 of Algorithm \ref{alg:subdiff-subalg} the primal
and dual optimal solutions of \eqref{eq:alg-primal-subpblm} are $x_{i}^{+}$
and $z_{i}^{+}$. We can see that $x_{i}^{+}=[\mathbf{{x}}^{n,w}]_{i}$
and $z_{i}^{+}=[\mathbf{{z}}_{i}^{n,w}]_{i}$. Let the dual and primal
optimal solutions of \eqref{eq:unmod-alg-primal} be $z_{i}^{\circ}$
and $x_{i}^{\circ}$, which are $z_{i}^{\circ}=[\mathbf{{z}}_{i}^{n,w-1}]_{i}$
and $x_{i}^{\circ}=[\mathbf{{x}}^{n,w-1}]_{i}$ respectively. By Proposition
\ref{prop:P-D-of-one-block} and the forms of the problems \eqref{eq:alg-primal-subpblm}
and \eqref{eq:unmod-alg-primal}, we have $x_{i}^{+}+z_{i}^{+}=x_{i}^{\circ}+z_{i}^{\circ}$.
Thus $z_{i}^{+}-z_{i}^{\circ}=-(x_{i}^{+}-x_{i}^{\circ})$. In other
words, 
\begin{equation}
[\mathbf{{x}}^{n,w}-\mathbf{{x}}^{n,w-1}]_{i}=-[\mathbf{{z}}_{i}^{n,w}-\mathbf{{z}}_{i}^{n,w-1}]_{i}.\label{eq:for-beta-identity}
\end{equation}
Note that since $S_{n,w}\cap\bar{\mathcal{E}}=\emptyset$, $\mathbf{{v}}_{H}^{n,w}=\mathbf{{v}}_{H}^{n,w-1}$.
We have the following inequality chain, which we explain in a moment.
\begin{eqnarray}
 &  & \begin{array}{c}
f_{i,n,w}([\mathbf{{x}}^{n,w}]_{i})+\frac{1}{2}\|[\bar{\mathbf{{x}}}-\mathbf{{v}}_{H}^{n,w}]_{i}-[\mathbf{{x}}^{n,w}]_{i}\|^{2}\end{array}\label{eq:for-beta-chain}\\
 & = & \begin{array}{c}
f_{i,n,w-1}([\mathbf{{x}}^{n,w}]_{i})+\frac{1}{2}\|[\bar{\mathbf{{x}}}-\mathbf{{v}}_{H}^{n,w}]_{i}-[\mathbf{{x}}^{n,w}]_{i}\|^{2}\end{array}\nonumber \\
 & \geq & \begin{array}{c}
f_{i,n,w-1}([\mathbf{{x}}^{n,w-1}]_{i})+\frac{1}{2}\|[\bar{\mathbf{{x}}}-\mathbf{{v}}_{H}^{n,w}]_{i}-[\mathbf{{x}}^{n,w-1}]_{i}\|^{2}+\frac{1}{2}\|[\mathbf{{x}}^{n,w-1}]_{i}-[\mathbf{{x}}^{n,w}]_{i}\|^{2}.\end{array}\nonumber 
\end{eqnarray}
The equation in \eqref{eq:for-beta-chain} comes from the fact that
$[\mathbf{{x}}^{n,w}]_{i}$ being the minimizer of \eqref{eq:alg-primal-subpblm}
is such that $f_{i,n,w-1}([\mathbf{{x}}^{n,w}]_{i})=\tilde{f}_{i,n,w-1}([\mathbf{{x}}^{n,w}]_{i})$,
and $f_{i,n,w}(\cdot)$ is designed through \eqref{eq:finw-design}
so that $f_{i,n,w}([\mathbf{{x}}^{n,w}]_{i})=f_{i,n,w-1}([\mathbf{{x}}^{n,w}]_{i})$.
The inequality in \eqref{eq:for-beta-chain} follows from the design
of $f_{i,n,w-1}(\cdot)$ through \eqref{eq:finw-design}, which implies
that $[\mathbf{{x}}^{n,w-1}]_{i}$ is the minimizer of $f_{i,n,w-1}(\cdot)+\frac{1}{2}\|[\bar{\mathbf{{x}}}-\mathbf{{v}}_{H}^{n,w}]_{i}-\cdot\|^{2}$.

Since $S_{n,w}\cap\bar{\mathcal{E}}=\emptyset$, we have $\mathbf{{v}}_{H}^{n,w-1}=\mathbf{{v}}_{H}^{n,w}$.
Let $\beta_{i}$ be defined by 
\begin{eqnarray}
\beta_{i} & := & \begin{array}{c}
\left(f_{i,n,w}^{*}([\mathbf{{z}}_{i}^{n,w}]_{i})+\frac{1}{2}\|[\bar{\mathbf{{x}}}-\mathbf{{v}}_{H}^{n,w}]_{i}-[\mathbf{{z}}_{i}^{n,w}]_{i}\|^{2}\right)\end{array}\label{eq:beta-form}\\
 &  & \begin{array}{c}
-\big(f_{i,n,w-1}^{*}([\mathbf{{z}}_{i}^{n,w-1}]_{i})+\frac{1}{2}\|[\bar{\mathbf{{x}}}-\mathbf{{v}}_{H}^{n,w-1}]_{i}-[\mathbf{{z}}_{i}^{n,w-1}]_{i}\|^{2}\big).\end{array}\nonumber 
\end{eqnarray}
Proposition \ref{prop:P-D-of-one-block} implies that 
\begin{eqnarray}
 &  & \begin{array}{c}
f_{i,n,w}^{*}([\mathbf{{z}}_{i}^{n,w}]_{i})+\frac{1}{2}\|[\bar{\mathbf{{x}}}-\mathbf{{v}}_{H}^{n,w}]_{i}-[\mathbf{{z}}_{i}^{n,w}]_{i}\|^{2}\end{array}\label{eq:use-prop}\\
 & = & \begin{array}{c}
-f_{i,n,w}([\mathbf{{x}}^{n,w}]_{i})+\frac{1}{2}\|[\bar{\mathbf{{x}}}-\mathbf{{v}}_{H}^{n,w}]_{i}\|^{2}-\frac{1}{2}\|[\bar{\mathbf{{x}}}-\mathbf{{v}}_{H}^{n,w}]_{i}-[\mathbf{{x}}^{n,w}]_{i}\|^{2}.\end{array}\nonumber 
\end{eqnarray}
An equation similar to \eqref{eq:use-prop} involving $f_{i,n,w-1}(\cdot)$
plugged into \eqref{eq:for-beta-chain} and \eqref{eq:beta-form},
and the fact that $\mathbf{{v}}_{H}^{n,w}=\mathbf{{v}}_{H}^{n,w-1}$
gives $\beta_{i}\geq\frac{1}{2}\|[\mathbf{{z}}_{i}^{n,w-1}-\mathbf{{z}}_{i}^{n,w}]_{i}\|^{2}$.
One can easily check from the definitions that 
\[
\begin{array}{c}
F_{n,w}(\{\mathbf{{z}}_{\alpha}^{n,w}\}_{\alpha\in\bar{\mathcal{E}}\cup\mathcal{V}})-F_{n,w-1}(\{\mathbf{{z}}_{\alpha}^{n,w-1}\}_{\alpha\in\bar{\mathcal{E}}\cup\mathcal{V}})=\underset{i\in S_{n,w}}{\sum}\beta_{i},\end{array}
\]
which leads to our result. The proof of the second statement is exactly
the same. 
\end{proof}
We remark on the design of Algorithm \ref{alg:Ext-Dyk}.
\begin{rem}
(On improving the affine models) In our design of Algorithm \ref{alg:Ext-Dyk},
we improve the affine model $f_{i,n,w}(\cdot)$ for $i\in\mathcal{V}_{4}$
only if $S_{n,w}\subset\mathcal{V}_{4}$. It is easy to see that we
can apply the observation in Remark \ref{rem:min-max-of-2-quads}
to minimize the maximum of two quadratics \eqref{eq:alg-primal-subpblm}
analytically, but doing so without Assumption \ref{assu:to-start-subalg}
would affect the convergence proof. 
\end{rem}

\subsection{Further new steps in convergence proof}

Since the proof of convergence shares many similarities to the original
proof in \cite{Pang_Dist_Dyk}, we describe the new steps of the proof
in this subsection that were not already covered and defer the rest
of the proof to the appendix.  

Recall the definition of $\mathbf{f}_{\alpha,n,w}(\cdot)$ in \eqref{eq_m:h-a-n-w}.
We have the following easy claim.
\begin{claim}
\label{claim:Fenchel-duality}In Algorithm \ref{alg:Ext-Dyk}, for
all $\alpha\in S_{n,w}$, we have 

\begin{enumerate}
\item [(a)]$-\mathbf{{x}}^{n,w}+\partial\mathbf{f}_{\alpha,n,w}^{*}(\mathbf{{z}}_{\alpha}^{n,w})\ni0$,
\item [(b)]$-\mathbf{{z}}_{\alpha}^{n,w}+\partial\mathbf{f}_{\alpha,n,w}(\mathbf{{x}}^{n,w})\ni0$,
and
\item [(c)]$\mathbf{f}_{\alpha,n,w}(\mathbf{{x}}^{n,w})+\mathbf{f}_{\alpha,n,w}^{*}(\mathbf{{z}}_{\alpha}^{n,w})=\langle\mathbf{{x}}^{n,w},\mathbf{{z}}_{\alpha}^{n,w}\rangle$. 
\end{enumerate}
\end{claim}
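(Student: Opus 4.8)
The plan is to observe that the three assertions (a), (b), (c) are mutually equivalent instances of the Fenchel--Young equality: for the closed proper convex function $\mathbf{f}_{\alpha,n,w}(\cdot)$ one has $\mathbf{z}\in\partial\mathbf{f}_{\alpha,n,w}(\mathbf{x})\iff\mathbf{x}\in\partial\mathbf{f}_{\alpha,n,w}^{*}(\mathbf{z})\iff\mathbf{f}_{\alpha,n,w}(\mathbf{x})+\mathbf{f}_{\alpha,n,w}^{*}(\mathbf{z})=\langle\mathbf{x},\mathbf{z}\rangle$. Hence it suffices to establish (a) alone, and (b) and (c) follow immediately. I would prove (a) from the stationarity conditions of whichever subproblem produced $\mathbf{{z}}_{\alpha}^{n,w}$, splitting along the branch taken in Algorithm \ref{alg:Ext-Dyk}.

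In the branch $S_{n,w}\not\subset\mathcal{V}_{4}$ (line 12), the block $\{\mathbf{{z}}_{\alpha}^{n,w}\}_{\alpha\in S_{n,w}}$ minimizes \eqref{eq:Dykstra-min-subpblm}. Writing the first-order optimality condition in the variable $\mathbf{{z}}_{\alpha}$, the gradient of the quadratic term contributes $-\big(\bar{\mathbf{{x}}}-\sum_{\beta\notin S_{n,w}}\mathbf{{z}}_{\beta}^{n,w-1}-\sum_{\beta\in S_{n,w}}\mathbf{{z}}_{\beta}^{n,w}\big)$ and the conjugate term contributes $\partial\mathbf{f}_{\alpha,n,w}^{*}(\mathbf{{z}}_{\alpha}^{n,w})$. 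Since line 14 sets $\mathbf{{z}}_{\beta}^{n,w}=\mathbf{{z}}_{\beta}^{n,w-1}$ for $\beta\notin S_{n,w}$, the residual inside the norm equals $\bar{\mathbf{{x}}}-\sum_{\beta}\mathbf{{z}}_{\beta}^{n,w}$, which is exactly $\mathbf{{x}}^{n,w}$ by \eqref{eq:x-from-v-A}. Thus the stationarity inclusion reads $0\in-\mathbf{{x}}^{n,w}+\partial\mathbf{f}_{\alpha,n,w}^{*}(\mathbf{{z}}_{\alpha}^{n,w})$, which is precisely (a).

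In the branch $S_{n,w}\subset\mathcal{V}_{4}$ (line 9, Algorithm \ref{alg:subdiff-subalg}), each $\mathbf{{z}}_{i}^{n,w}$ is assembled from the primal/dual pair $(x_{i}^{+},z_{i}^{+})$ of the one-node subproblem, with $f_{i,n,w}$ designed through \eqref{eq:finw-design} so that these optimizers coincide with those of the affine-model problem. Proposition \ref{prop:P-D-of-one-block} applied to that one-node pair supplies the primal--dual identity and strong duality, giving $[\mathbf{{x}}^{n,w}]_{i}\in\partial f_{i,n,w}^{*}([\mathbf{{z}}_{i}^{n,w}]_{i})$ in $\mathbb{R}^{m}$. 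I would then lift this to $[\mathbb{R}^{m}]^{|\mathcal{V}|}$: because $\mathbf{f}_{i,n,w}(\mathbf{{x}})=f_{i,n,w}([\mathbf{{x}}]_{i})$ depends only on coordinate $i$ and $\mathbf{{z}}_{i}^{n,w}$ is sparse by Proposition \ref{prop:sparsity}, the conjugate $\mathbf{f}_{i,n,w}^{*}$ separates into $f_{i,n,w}^{*}$ in coordinate $i$ and the indicator of $\{0\}$ in the other coordinates, the off-$i$ components being automatically consistent with that constraint; this yields the full inclusion (a).

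The delicate point, and the main obstacle, is this lifting step in the $\mathcal{V}_{4}$ branch, namely reconciling the one-node subproblem stated over $\mathbb{R}^{m}$ with the subdifferential relation required over $[\mathbb{R}^{m}]^{|\mathcal{V}|}$; this rests squarely on the coordinate-separable structure of $\mathbf{f}_{i,n,w}$ and the sparsity from Proposition \ref{prop:sparsity}. In the other branch the only care needed is the identification of the norm residual with $\mathbf{{x}}^{n,w}$ via line 14 and \eqref{eq:x-from-v-A}. Neither step is deep, which is consistent with the statement being labelled an easy claim.
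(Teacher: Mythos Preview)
Your proposal is correct and follows essentially the same approach as the paper: both split into the two algorithmic branches, derive (a) from the first-order optimality conditions of the relevant subproblem together with the identification of the residual as $\mathbf{x}^{n,w}$, and then invoke the standard Fenchel--Young equivalence for (b) and (c). In fact you spell out the $\mathcal{V}_{4}$ branch (the lifting from $\mathbb{R}^{m}$ to $[\mathbb{R}^{m}]^{|\mathcal{V}|}$ via separability and sparsity) in more detail than the paper, which simply declares that case ``similar''.
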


\begin{proof}
There are two cases. The first case is when \eqref{eq:Dykstra-min-subpblm}
is invoked. By taking the optimality conditions in \eqref{eq:Dykstra-min-subpblm}
with respect to $z_{\alpha}$ for $\alpha\in S_{n,w}$ and making
use of \eqref{eq_m:all_acronyms} to get $\mathbf{{x}}^{n,w}=\bar{\mathbf{{x}}}-\sum_{\alpha\in\bar{\mathcal{E}}\cup\mathcal{V}}\mathbf{{z}}_{\alpha}^{n,w}$,
we deduce (a). The second case is when Algorithm \ref{alg:subdiff-subalg}
is invoked, and is similar. The equivalence of (a), (b) and (c) is
standard. 
\end{proof}
For all valid $(n,w)$, since $\mathbf{f}_{\alpha,n,w}(\cdot)\leq\mathbf{f}_{\alpha}(\cdot)$
for all $\alpha\in\mathcal{V}_{4}$, we have $\mathbf{f}_{\alpha,n,w}^{*}(\cdot)\geq\mathbf{f}_{\alpha}^{*}(\cdot)$.
Let $D_{\alpha,n}$ and $E_{\alpha,n}$ be defined to be \begin{subequations}\label{eq_m:error-def}
\begin{eqnarray}
D_{\alpha,n} & := & \mathbf{f}_{\alpha,n,p(n,\alpha)}^{*}(\mathbf{{z}}_{\alpha}^{n,p(n,\alpha)})-\mathbf{f}_{\alpha}^{*}(\mathbf{{z}}_{\alpha}^{n,p(n,\alpha)})\geq0\label{eq:D-error-def}\\
\mbox{and }E_{\alpha,n} & := & \mathbf{f}_{\alpha}(\bar{x}-v_{A}^{n,p(n,\alpha)})-\mathbf{f}_{\alpha,n,p(n,\alpha)}(\bar{\mathbf{{x}}}-\mathbf{{v}}_{A}^{n,p(n,\alpha)})\geq0.\label{eq:E-error-def}
\end{eqnarray}
\end{subequations}When $\alpha\in[\mathcal{\bar{E}\cup}\mathcal{V}]\backslash\mathcal{V}_{4}$,
then $E_{\alpha,n}=D_{\alpha,n}=0$ for all $n$. Next, we have 
\begin{eqnarray}
 &  & \mathbf{f}_{\alpha}^{*}(\mathbf{{z}}_{\alpha}^{n,p(n,\alpha)})+\mathbf{f}_{\alpha}(\bar{\mathbf{{x}}}-\mathbf{{v}}_{A}^{n,p(n,\alpha)})\label{eq:error-deriv}\\
 & \overset{\eqref{eq_m:error-def}}{=} & \mathbf{f}_{\alpha,n,p(n,\alpha)}^{*}(\mathbf{{z}}_{\alpha}^{n,p(n,\alpha)})+\mathbf{f}_{\alpha,n,p(n,\alpha)}(\bar{\mathbf{{x}}}-\mathbf{{v}}_{A}^{n,p(n,\alpha)})+E_{\alpha,n}-D_{\alpha,n}\nonumber \\
 & \overset{\scriptsize{\alpha\in S_{n,p(n,\alpha)},\mbox{ Claim \ref{claim:Fenchel-duality}}}}{=} & \langle\mathbf{{z}}_{\alpha}^{n,p(n,\alpha)},\bar{\mathbf{{x}}}-\mathbf{{v}}_{A}^{n,p(n,\alpha)}\rangle+E_{\alpha,n}-D_{\alpha,n}.\nonumber 
\end{eqnarray}

We now state the main convergence theorem of this paper. 
\begin{thm}
\label{thm:convergence} (Convergence to primal minimizer) Consider
Algorithm \ref{alg:Ext-Dyk}. Assume that the problem \eqref{eq:Dyk-primal}
is feasible, and for all $n\geq1$, $\bar{\mathcal{E}}_{n}=[\cup_{w=1}^{\bar{w}}S_{n,w}]\cap\bar{\mathcal{E}}$,
and $[\cup_{w=1}^{\bar{w}}S_{n,w}]\supset\mathcal{V}$. Suppose that
Assumption \ref{assu:to-start-subalg} holds.

For the sequence $\{\mathbf{{z}}_{\alpha}^{n,w}\}_{{1\leq n<\infty\atop 0\leq w\leq\bar{w}}}\subset[\mathbb{R}^{m}]^{|\mathcal{V}|}$
for each $\alpha\in\bar{\mathcal{E}}\cup\mathcal{V}$ generated by
Algorithm \ref{alg:Ext-Dyk} and the sequences $\{\mathbf{{v}}_{H}^{n,w}\}_{{1\leq n<\infty\atop 0\leq w\leq\bar{w}}}\subset[\mathbb{R}^{m}]^{|\mathcal{V}|}$
and $\{\mathbf{{v}}_{A}^{n,w}\}_{{1\leq n<\infty\atop 0\leq w\leq\bar{w}}}\subset[\mathbb{R}^{m}]^{|\mathcal{V}|}$
thus derived, we have:

\begin{enumerate}
\item [(i)]The sum $\sum_{n=1}^{\infty}\sum_{w=1}^{\bar{w}}\|\mathbf{{v}}_{A}^{n,w}-\mathbf{{v}}_{A}^{n,w-1}\|^{2}$
is finite and $\{F_{n,\bar{w}}(\{\mathbf{{z}}_{\alpha}^{n,\bar{w}}\}_{\alpha\in\bar{\mathcal{E}}\cup\mathcal{V}})\}_{n=1}^{\infty}$
is nondecreasing.
\item [(ii)]There is a constant $C$ such that $\|\mathbf{{v}}_{A}^{n,w}\|^{2}\leq C$
for all $n\in\mathbb{N}$ and $w\in\{1,\dots,\bar{w}\}$. 
\item [(iii)]For all $i\in\mathcal{V}_{3}\cup\mathcal{V}_{4}$, $n\geq1$
and $w\in\{1,\dots,\bar{w}\}$, the vectors $\mathbf{{z}}_{i}^{n,w}$
are bounded. 
\end{enumerate}
Suppose also that

\begin{enumerate}
\item [(1)]There are constants $A$ and $B$ such that 
\begin{equation}
\sum_{\alpha\in\mathcal{\bar{E}}\cup\mathcal{V}}\|\mathbf{{z}}_{\alpha}^{n,\bar{w}}\|\leq A\sqrt{n}+B\mbox{ for all }n\geq0.\label{eq:sqrt-growth-sum-z}
\end{equation}
\end{enumerate}
Then

\begin{enumerate}
\item [(iv)]For all $\alpha\in[\mathcal{\bar{E}}\cup\mathcal{V}]\backslash\mathcal{V}_{4}$,
we have $E_{\alpha,n}=0$. Also, for all $i\in\mathcal{V}_{4}$, we
have $\lim_{n\to\infty}E_{i,n}=0$. 
\item [(v)]There exists a subsequence $\{\mathbf{{v}}_{A}^{n_{k},\bar{w}}\}_{k=1}^{\infty}$
of $\{\mathbf{{v}}_{A}^{n,\bar{w}}\}_{n=1}^{\infty}$ which converges
to some $\mathbf{{v}}_{A}^{*}\in[\mathbb{R}^{m}]^{|\mathcal{V}|}$
and that 
\[
\lim_{k\to\infty}\langle\mathbf{{v}}_{A}^{n_{k},\bar{w}}-\mathbf{{v}}_{A}^{n_{k},p(n_{k},\alpha)},\mathbf{{z}}_{\alpha}^{n_{k},\bar{w}}\rangle=0\mbox{ for all }\alpha\in\bar{\mathcal{E}}\cup\mathcal{V}.
\]
\item [(vi)]Let $\mathbf{f}(\cdot)=\sum_{\alpha\in\bar{\mathcal{E}}\cup\mathcal{V}}\mathbf{f}_{\alpha}(\cdot)$.
For the $\mathbf{{v}}_{A}^{*}$ in (v), $\mathbf{{x}}_{0}-\mathbf{{v}}_{A}^{*}$
is the minimizer of the primal problem \eqref{eq:Dyk-primal} and
\begin{equation}
\lim_{k\to\infty}F_{n_{k},w}(\{\mathbf{{z}}_{\alpha}^{n_{k},w}\}_{\alpha\in\bar{\mathcal{E}}\cup\mathcal{V}})=\lim_{k\to\infty}F(\{\mathbf{{z}}_{\alpha}^{n_{k},w}\}_{\alpha\in\bar{\mathcal{E}}\cup\mathcal{V}})=\frac{1}{2}\|\mathbf{{v}}_{A}^{*}\|^{2}+\mathbf{f}(\bar{\mathbf{{x}}}-\mathbf{{v}}_{A}^{*}).\label{eq:thm-iv-concl}
\end{equation}
\end{enumerate}
The properties (i) to (vi) in turn imply that $\lim_{n\to\infty}\mathbf{{x}}^{n,\bar{w}}$
exists and equals $\bar{\mathbf{{x}}}-\mathbf{{v}}_{A}^{*}$, which
is the primal minimizer of \eqref{eq:Dyk-primal}.
\end{thm}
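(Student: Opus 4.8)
The plan is to run the Boyle--Dykstra style dual-ascent argument of \cite{Pang_Dist_Dyk} on the minorized dual $F_{n,w}$ of \eqref{eq:Dykstra-dual-defn-1}, treating the exactly-minimized blocks ($\alpha\notin\mathcal{V}_4$) as before and inserting the bundle-type estimates of Section \ref{sec:First-alg} wherever a $\mathcal{V}_4$ block is updated. I would first prove (i). For each inner step I would establish the sufficient-increase inequality $\tfrac{1}{2}\|\mathbf{x}^{n,w}-\mathbf{x}^{n,w-1}\|^{2}\le F_{n,w}(\{\mathbf{z}^{n,w}_\alpha\})-F_{n,w-1}(\{\mathbf{z}^{n,w-1}_\alpha\})$: for $S_{n,w}\not\subset\mathcal{V}_4$ this is the exact block-minimization estimate (Propositions \ref{prop:subproblems} and \ref{prop:P-D-of-one-block}, together with Proposition \ref{prop:control-growth} for multi-block $\mathcal{V}_3$ steps), and for $S_{n,w}\subset\mathcal{V}_4$ it is exactly Proposition \ref{prop:quad-dec-case-2}. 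I would then check that the inter-iteration reset \eqref{eq_m:resetted-z-i-j} leaves $F$ unchanged, since $\mathbf{v}_H$ is conserved (line 16), the node models carry over (line 17), and each edge conjugate $\mathbf{f}^*_{(e,\bar k)}$ is the indicator of $H^\perp_{(e,\bar k)}$, which vanishes on the reset components. Because \eqref{eq:Dyk-primal} is feasible and strongly convex, strong duality gives a finite optimal dual value $F^\ast$, and $F_{n,w}\le F\le F^\ast$; monotonicity plus telescoping the increases over all $(n,w)$ yields $\sum_{n,w}\tfrac{1}{2}\|\mathbf{x}^{n,w}-\mathbf{x}^{n,w-1}\|^{2}\le F^\ast-F_{1,0}<\infty$, and since $\mathbf{x}^{n,w}=\bar{\mathbf{x}}-\mathbf{v}^{n,w}_A$ this is (i).

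For (ii) and (iii) I would follow the corresponding boundedness arguments of \cite{Pang_Dist_Dyk} essentially verbatim: the upper bound on $F_{n,w}$ together with its coercive quadratic term forces $\|\mathbf{v}^{n,w}_A\|$ to stay bounded, and for $i\in\mathcal{V}_3\cup\mathcal{V}_4$ the hypothesis $\dom(f_i)=\mathbb{R}^m$ makes the relevant subgradients bounded on the compact set produced by (ii), thereby bounding $\mathbf{z}^{n,w}_i$. These are precisely the parts the statement defers to the appendix.

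The genuinely new and hardest step is (iv), the vanishing of the approximation errors $E_{i,n}$ for $i\in\mathcal{V}_4$ from \eqref{eq_m:error-def}. For $\alpha\notin\mathcal{V}_4$ the models are exact, so Claim \ref{claim:Fenchel-duality} gives $D_{\alpha,n}=E_{\alpha,n}=0$ at once. For $i\in\mathcal{V}_4$, $E_{i,n}$ is exactly the gap between $f_i$ and its current affine minorant at the primal base point $[\bar{\mathbf{x}}-\mathbf{v}^{n,w}_H]_i$, i.e.\ the distributed analogue of $\bar\alpha_w$ in \eqref{eq:bar-alpha-w}. Assumption \ref{assu:to-start-subalg} is what guarantees that, between two successive updates of node $i$, this base point is frozen, so that one step of Algorithm \ref{alg:subdiff-subalg} reproduces one step of Algorithm \ref{alg:basic-dual-ascent} and the recursion of Lemma \ref{lem:alpha-recurrs}(1) applies per node. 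The per-step decrease is controlled by $\|\tilde s+\tilde x-\bar x\|$, which is bounded through (iii); the role of the square-root growth hypothesis \eqref{eq:sqrt-growth-sum-z} is to keep this quantity from accumulating too fast, so that the summed decreases diverge and hence force $E_{i,n}\to0$. I expect the main difficulty to lie here, in transporting the single-function rate of Theorem \ref{thm:basic-conv-rate} into the coupled setting where the offset $\sum_{\alpha\notin S}\mathbf{z}_\alpha$ drifts across outer iterations; the bookkeeping in Assumption \ref{assu:to-start-subalg}(2)--(3) and the condition $S_{n,1}=\mathcal{V}_4$ is what must be leveraged to close this.

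Finally I would assemble (v), (vi) and the conclusion. Boundedness (ii) yields a subsequence $\mathbf{v}^{n_k,\bar{w}}_A\to\mathbf{v}^\ast_A$; combining the square-summability from (i), the bound (iii), and the vanishing errors (iv) drives the cross terms $\langle\mathbf{v}^{n_k,\bar{w}}_A-\mathbf{v}^{n_k,p(n_k,\alpha)}_A,\mathbf{z}^{n_k,\bar{w}}_\alpha\rangle\to0$, which is (v). Passing to the limit in the Fenchel relations of Claim \ref{claim:Fenchel-duality} and using the identity \eqref{eq:error-deriv}, together with strong duality at the aggregate level via Proposition \ref{prop:P-D-of-one-block}, shows that $\bar{\mathbf{x}}-\mathbf{v}^\ast_A$ satisfies the optimality conditions for \eqref{eq:Dyk-primal} and that the dual values converge to the optimum, giving (vi). The primal objective in \eqref{eq:Dyk-primal} is strongly convex, so its minimizer $\bar{\mathbf{x}}-\mathbf{v}^\ast_A$, and hence $\mathbf{v}^\ast_A$ itself, is unique; every convergent subsequence of the bounded sequence $\{\mathbf{v}^{n,\bar{w}}_A\}$ must therefore have the same limit $\mathbf{v}^\ast_A$, whence the full sequence converges and $\mathbf{x}^{n,\bar{w}}=\bar{\mathbf{x}}-\mathbf{v}^{n,\bar{w}}_A\to\bar{\mathbf{x}}-\mathbf{v}^\ast_A$, the primal minimizer.
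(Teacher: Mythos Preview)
Your overall architecture matches the paper, but there are two genuine gaps and one misattribution.

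\textbf{Misplaced role of the $\sqrt{n}$ hypothesis.} You invoke \eqref{eq:sqrt-growth-sum-z} in part (iv), but the paper's proof of (iv) does not use it at all: once (iii) bounds $[\mathbf{z}_i^{n,\bar w}]_i$, and (ii) bounds the point at which $s_{i,n,\bar w}\in\partial f_i$ is taken, the one-node recursion of Lemma~\ref{lem:alpha-recurrs}(1) gives $\Delta_{i,n}\ge \tfrac12 t_{i,n}^2$ with $t_{i,n}$ bounded away from zero whenever $E_{i,n}$ is; this already contradicts the finite dual upper bound. The $\sqrt{n}$ hypothesis is needed for (v), not (iv).

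\textbf{Gap in (v).} You propose to get the cross terms $\langle \mathbf{v}_A^{n_k,\bar w}-\mathbf{v}_A^{n_k,p(n_k,\alpha)},\mathbf{z}_\alpha^{n_k,\bar w}\rangle\to 0$ from ``square-summability (i), the bound (iii), and the vanishing errors (iv)''. This does not close: (iii) only bounds $\mathbf{z}_i^{n,w}$ for $i\in\mathcal{V}_3\cup\mathcal{V}_4$, while for $\alpha\in\bar{\mathcal{E}}\cup\mathcal{V}_1\cup\mathcal{V}_2$ the dual vectors $\mathbf{z}_\alpha^{n,\bar w}$ may be unbounded (indeed, the dual problem need not have a minimizer). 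The paper instead uses the Boyle--Dykstra device: from (i) one shows $\liminf_n \big(\sum_{w}\|\mathbf{v}_A^{n,w}-\mathbf{v}_A^{n,w-1}\|\big)\sqrt{n}=0$, extracts a subsequence along which this product tends to zero, and then pairs it with \eqref{eq:sqrt-growth-sum-z} so that $\|\mathbf{z}_\alpha^{n_k,\bar w}\|\le A\sqrt{n_k}+B$ is absorbed. Without this $\sqrt{n}$ pairing the inner products need not vanish.

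\textbf{Gap in the final step.} Your closing argument (``every convergent subsequence has the same limit'') presupposes that (v) and (vi) hold along \emph{every} convergent subsequence, but the $\liminf\sqrt{n}$ extraction above produces only one specific subsequence. The paper closes differently: from (vi) along that single subsequence and the monotonicity of $\{F_{n,\bar w}(\{\mathbf{z}_\alpha^{n,\bar w}\})\}_n$ in (i), the full sequence of (minorized) dual values converges to the primal optimum; then the duality-gap inequality \eqref{eq:From-8} with $\mathbf{x}=\bar{\mathbf{x}}-\mathbf{v}_A^*$ gives $\tfrac12\|\mathbf{x}^{n,\bar w}-(\bar{\mathbf{x}}-\mathbf{v}_A^*)\|^2\le \text{(gap)}\to 0$ for the full sequence.

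A smaller point: (iii) is flagged as new in this paper, not ``essentially verbatim'' from \cite{Pang_Dist_Dyk}. Your one-line sketch (subgradients of $f_i$ are bounded on the compact set from (ii)) works for $i\in\mathcal{V}_3$ via Claim~\ref{claim:Fenchel-duality}(b), but for $i\in\mathcal{V}_4$ the relevant object is the slope of the affine minorant $f_{i,n,w}$, not a subgradient of $f_i$; the paper handles this by a contradiction argument on $[\bar{\mathbf{x}}-\mathbf{v}_H^{n,w}]_i$ that you should either reproduce or replace by an explicit induction showing the minorant slopes stay in the convex hull of previously generated subgradients of $f_i$.
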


The proofs of parts (i), (ii), (v) and (vi) are similar to the proof
in \cite{Pang_Dist_Dyk}, and (iii) and (iv) are new. We shall prove
(iii) and (iv) here and defer the rest of the proof to the appendix.
\begin{proof}
[Proof of Theorem \ref{thm:convergence}(iii)]In view of line 14 in
Algorithm \ref{alg:Ext-Dyk}, it suffices to prove that $\mathbf{{z}}_{i}^{n,w}$
is bounded if $i\in S_{n,w}$. By the sparsity pattern in Proposition
\ref{prop:sparsity}, for each $i\in\mathcal{V}_{3}\cup\mathcal{V}_{4}$,
$\mathbf{{z}}_{i}^{n,w}$ is bounded if and only if $[\mathbf{{z}}_{i}^{n,w}]_{i}$
is bounded. Since $\{[\bar{\mathbf{{x}}}-\mathbf{{v}}_{A}^{n,w}]_{i}\}_{{1\leq n<\infty\atop 0\leq w\leq\bar{w}}}$
is bounded by (ii), it is clear that $\{[\mathbf{{z}}_{i}^{n,w}]_{i}\}_{{1\leq n<\infty\atop 0\leq w\leq\bar{w}}}$
is bounded if and only if $\{[\bar{\mathbf{{x}}}-\mathbf{{v}}_{H}^{n,w}]_{i}\}_{{1\leq n<\infty\atop 0\leq w\leq\bar{w}}}$
is bounded. Seeking a contradiction, suppose $\{[\bar{\mathbf{{x}}}-\mathbf{{v}}_{H}^{n,w}]_{i}\}_{{1\leq n<\infty\atop 0\leq w\leq\bar{w}}}$
is unbounded. We look at the problem 
\begin{equation}
\begin{array}{c}
\underset{x\in\mathbb{R}^{m}}{\min}\frac{1}{2}\|[\bar{\mathbf{{x}}}-\mathbf{{v}}_{H}^{n,w}]_{i}-x\|^{2}+f_{i}(x)\end{array}\label{eq:in-pf-primal-1}
\end{equation}
and consider two possibilities. Let $\tilde{x}_{i}^{n,w}$ be the
primal solution to \eqref{eq:in-pf-primal-1}. Note that if $i\in\mathcal{V}_{3}$,
then $\tilde{x}_{i}^{n,w}$ is $[\mathbf{{x}}^{n,w}]_{i}$. If the
$\{\tilde{x}_{i}^{n,w}\}_{n,w}$ are bounded, then the dual solution
of \eqref{eq:in-pf-primal-1} is $[\bar{\mathbf{{x}}}-\mathbf{{v}}_{H}^{n,w}]_{i}-\tilde{x}_{i}^{n,w}$,
which will be unbounded. A standard compactness argument shows that
there is a point $\tilde{x}\in\mathbb{R}^{m}$ for which the set $\partial f_{i}(\tilde{x})$
is unbounded, which contradicts $\dom(f_{i})=\mathbb{R}^{m}$. 

If the corresponding primal solutions $\tilde{x}_{i}^{n,w}$ are unbounded,
consider $\{\tilde{\mathbf{{z}}}_{\alpha}^{n,w}\}_{\alpha\in\bar{\mathcal{E}}\cup\mathcal{V}}$,
where 
\[
\tilde{\mathbf{{z}}}_{\alpha}^{n,w}=\mathbf{{z}}_{\alpha}^{n,w}\mbox{ if }\alpha\in[\bar{\mathcal{E}}\cup\mathcal{V}]\backslash\{i\}\mbox{ and }[\tilde{\mathbf{{z}}}_{i}^{n,w}]_{j}=\begin{cases}
[\bar{\mathbf{{x}}}-\mathbf{{v}}_{H}^{n,w}]_{i}-\tilde{x}_{i}^{n,w} & \mbox{ if }j=i\\
0 & \mbox{ otherwise. }
\end{cases}
\]
Let $\tilde{F}_{n,w}(\cdot)$ be defined to be
\begin{equation}
\begin{array}{c}
\tilde{F}_{n,w}(\{\mathbf{{z}}_{\alpha}\}_{\alpha\in\bar{\mathcal{E}}\cup\mathcal{V}}):=-\frac{1}{2}\left\Vert \bar{\mathbf{{x}}}-\underset{\alpha\in\bar{\mathcal{E}}\cup\mathcal{V}}{\sum}\mathbf{{z}}_{\alpha}\right\Vert ^{2}+\frac{1}{2}\|\bar{\mathbf{{x}}}\|^{2}-\underset{\alpha\in[\bar{\mathcal{E}}\cup\mathcal{V}]\backslash\{i\}}{\sum}\mathbf{f}_{\alpha,n,w}(\mathbf{{z}}_{\alpha})-\mathbf{f}_{i}(\mathbf{{z}}_{i})\end{array}\label{eq:Dykstra-dual-defn-2}
\end{equation}
 Then $F_{n,w}(\cdot)\leq\tilde{F}_{n,w}(\cdot)\leq F(\cdot)$. Also,
Proposition \ref{prop:P-D-of-one-block} shows that $[\tilde{\mathbf{{z}}}_{i}^{n,w}]_{i}$
is the dual solution to \eqref{eq:in-pf-primal-1}. So 
\begin{equation}
F_{n,w}(\{\mathbf{{z}}_{\alpha}^{n,w}\}_{\alpha\in\bar{\mathcal{E}}\cup\mathcal{V}})\leq\tilde{F}_{n,w}(\{\mathbf{{z}}_{\alpha}^{n,w}\}_{\alpha\in\bar{\mathcal{E}}\cup\mathcal{V}})\leq\tilde{F}_{n,w}(\{\tilde{\mathbf{{z}}}_{\alpha}^{n,w}\}_{\alpha\in\bar{\mathcal{E}}\cup\mathcal{V}}\})\leq F(\{\tilde{\mathbf{{z}}}_{\alpha}^{n,w}\}_{\alpha\in\bar{\mathcal{E}}\cup\mathcal{V}}\}).\label{eq:big-F-chain}
\end{equation}
Next, suppose $\mathbf{{x}}^{*}$ is a solution of \eqref{eq:Dyk-primal}.
Then 
\begin{eqnarray*}
 &  & \begin{array}{c}
\frac{1}{2}\|\bar{\mathbf{{x}}}-\mathbf{{x}}^{*}\|^{2}+\underset{\alpha\in\bar{\mathcal{E}}\cup\mathcal{V}}{\sum}\mathbf{f}_{\alpha}(\mathbf{{x}}^{*})-F_{n,w}(\{\mathbf{{z}}_{\alpha}^{n,w}\}_{\alpha\in\bar{\mathcal{E}}\cup\mathcal{V}})\end{array}\\
 & \overset{\eqref{eq:big-F-chain}}{\geq} & \begin{array}{c}
\frac{1}{2}\|\bar{\mathbf{{x}}}-\mathbf{{x}}^{*}\|^{2}+\underset{\alpha\in\bar{\mathcal{E}}\cup\mathcal{V}}{\sum}\mathbf{f}_{\alpha}(\mathbf{{x}}^{*})-F(\{\tilde{\mathbf{{z}}}_{\alpha}^{n,w}\}_{\alpha\in\bar{\mathcal{E}}\cup\mathcal{V}})\end{array}\\
 & \overset{\eqref{eq:From-8}}{\geq} & \begin{array}{c}
\frac{1}{2}\left\Vert \bar{\mathbf{{x}}}-\mathbf{{x}}^{*}-\underset{\alpha\in\bar{\mathcal{E}}\cup\mathcal{V}}{\sum}\tilde{\mathbf{{z}}}_{\alpha}^{n,w}\right\Vert ^{2}\end{array}\\
 & \overset{\scriptsize{\mbox{Take }i\mbox{-th component only}}}{\geq} & \begin{array}{c}
\frac{1}{2}\left\Vert \tilde{x}_{i}^{n,w}-[\mathbf{{x}}^{*}]_{i}\right\Vert ^{2}.\end{array}
\end{eqnarray*}
The above inequality and the unboundedness of $\tilde{x}_{i}^{n,w}$
implies that the duality gap would go to infinity, which contradicts
part (i). Thus we are done. 
\end{proof}

\begin{proof}
[Proof of Theorem \ref{thm:convergence}(iv)]The first sentence of
this claim is immediate from \eqref{eq:h-a-n-w-eq-h-a}. We now prove
the second sentence. Seeking a contradiction, suppose that $\limsup_{n\to\infty}E_{i,n}>0$.
In Algorithm \ref{alg:subdiff-subalg}, in view of Assumption \ref{assu:to-start-subalg}(2),
$[\mathbf{{z}}_{i}^{n,\bar{w}}]_{i}$ is the minimizer of the problem
\begin{equation}
\begin{array}{c}
\underset{z\in\mathbb{R}^{m}}{\min}\frac{1}{2}\|[\bar{\mathbf{{x}}}-\mathbf{{v}}_{H}^{n,\bar{w}}]_{i}-z\|^{2}+f_{i,n,\bar{w}}^{*}(z).\end{array}\label{eq:in-pf-dual}
\end{equation}
The associated primal problem is, up to a constant independent of
$x$, 
\begin{equation}
\begin{array}{c}
\underset{x\in\mathbb{R}^{m}}{\min}\frac{1}{2}\|[\bar{\mathbf{{x}}}-\mathbf{{v}}_{H}^{n,\bar{w}}]_{i}-x\|^{2}+f_{i,n,\bar{w}}(x).\end{array}\label{eq:in-pf-primal}
\end{equation}
The primal solution is $[\bar{\mathbf{{x}}}-\mathbf{{v}}_{H}^{n,\bar{w}}]_{i}-[\mathbf{{z}}_{i}^{n,\bar{w}}]_{i}\overset{\eqref{eq_m:all_acronyms}}{=}[\bar{\mathbf{{x}}}-\mathbf{{v}}_{A}^{n,\bar{w}}]_{i}$.
The dual solution is $[\mathbf{{z}}_{i}^{n,\bar{w}}]_{i}$. So 
\begin{equation}
[\mathbf{{x}}_{i}^{n,\bar{w}}]_{i}\in\partial f_{i,n,\bar{w}}([\bar{\mathbf{{x}}}-\mathbf{{v}}_{A}^{n,\bar{w}}]_{i}).\label{eq:subgrad-birth}
\end{equation}
Recall Assumption \ref{assu:to-start-subalg}(3) and $\mathbf{{v}}_{H}^{n,\bar{w}}=\mathbf{{v}}_{H}^{n+1,0}$
by line 16 in Algorithm \ref{alg:Ext-Dyk}. We now analyze the increase
in the dual objective value of each separate problem: 
\begin{eqnarray*}
\Delta_{i,n} & := & \begin{array}{c}
\big[f_{i,n+1,0}^{*}([\mathbf{{z}}_{i}^{n+1,0}]_{i})+\frac{1}{2}\|[\bar{\mathbf{{x}}}-\mathbf{{v}}_{H}^{n+1,0}]_{i}-[\mathbf{{z}}_{i}^{n+1,0}]_{i}\|^{2}\big]\end{array}\\
 &  & \begin{array}{c}
-\big[f_{i,n+1,1}^{*}([\mathbf{{z}}_{i}^{n+1,1}]_{i})+\frac{1}{2}\|[\bar{\mathbf{{x}}}-\mathbf{{v}}_{H}^{n+1,1}]_{i}-[\mathbf{{z}}_{i}^{n+1,1}]_{i}\|^{2}\big].\end{array}
\end{eqnarray*}
Recall that $E_{i,n}$ is also $f_{i}([\mathbf{{x}}^{n,\bar{w}}]_{i})-f_{i,n,\bar{w}}([\mathbf{{x}}^{n,\bar{w}}]_{i})=f_{i}([\mathbf{{x}}^{n+1,0}]_{i})-f_{i,n+1,0}([\mathbf{{x}}^{n+1,0}]_{i}).$
Proposition \ref{prop:P-D-of-one-block} and Assumption \ref{assu:to-start-subalg}(2)
tell us that 
\begin{eqnarray*}
 &  & \begin{array}{c}
f_{i,n+1,0}^{*}([\mathbf{{z}}_{i}^{n+1,0}]_{i})+\frac{1}{2}\|[\bar{\mathbf{{x}}}-\mathbf{{v}}_{H}^{n+1,0}]_{i}-[\mathbf{{z}}_{i}^{n+1,0}]_{i}\|^{2}\big]\end{array}\\
 & = & \begin{array}{c}
-f_{i,n+1,0}([\mathbf{{x}}^{n+1,0}]_{i})+\frac{1}{2}\|[\bar{\mathbf{{x}}}-\mathbf{{v}}_{H}^{n+1,0}]_{i}\|^{2}-\frac{1}{2}\|[\bar{\mathbf{{x}}}-\mathbf{{v}}_{H}^{n+1,0}]_{i}-[\mathbf{{x}}^{n+1,0}]_{i}\|^{2}\big].\end{array}
\end{eqnarray*}
A similar result holds for the problem involving $f_{i,n+1,1}(\cdot)$.
Since $S_{n+1,1}\cap\bar{\mathcal{E}}=\emptyset$ and $\mathbf{{v}}_{H}^{n+1,0}=\mathbf{{v}}_{H}^{n+1,1}$,
we have 
\begin{eqnarray*}
\Delta_{i,n} & = & \begin{array}{c}
\big[f_{i,n+1,1}([\mathbf{{x}}^{n+1,1}]_{i})+\frac{1}{2}\|[\bar{\mathbf{{x}}}-\mathbf{{v}}_{H}^{n+1,1}]_{i}-[\mathbf{{x}}^{n+1,1}]_{i}\|^{2}\big]\end{array}\\
 &  & \begin{array}{c}
-\big[f_{i,n+1,0}([\mathbf{{x}}^{n+1,0}]_{i})+\frac{1}{2}\|[\bar{\mathbf{{x}}}-\mathbf{{v}}_{H}^{n+1,0}]_{i}-[\mathbf{{x}}^{n+1,0}]_{i}\|^{2}\big].\end{array}
\end{eqnarray*}
The analogue of Lemma \ref{lem:alpha-recurrs}(1) tells us that $\Delta_{i,n}\geq\frac{1}{2}t_{i,n}^{2}$,
where $t_{i,n}$ is the positive root satisfying 
\[
\begin{array}{c}
\frac{1}{2}t_{i,n}^{2}+\|s_{i,n,\bar{w}}+[\mathbf{{x}}^{n+1,0}]_{i}-[\bar{\mathbf{{x}}}-\mathbf{{v}}_{H}^{n+1,0}]_{i}\|t_{i,n}=E_{i,n},\end{array}
\]
where $s_{i,n,\bar{w}}\in\partial f_{i}([\mathbf{{x}}^{n,\bar{w}}]_{i})$
is the subgradient used to form the linearization of $f(\cdot)$ at
$[\mathbf{{x}}^{n+1,0}]_{i}$. Note that $[\mathbf{{x}}^{n,\bar{w}}]_{i}-[\bar{\mathbf{{x}}}-\mathbf{{v}}_{H}^{n,\bar{w}}]_{i}=-[\mathbf{{z}}_{i}^{n,\bar{w}}]_{i}$,
so the term $\|s_{i,n,\bar{w}}+[\mathbf{{x}}^{n,\bar{w}}]_{i}-[\bar{\mathbf{{x}}}-\mathbf{{v}}_{H}^{n,\bar{w}}]_{i}\|$
becomes $\|s_{i,n,\bar{w}}-[\mathbf{{z}}_{i}^{n,\bar{w}}]_{i}\|$.
Since both $s_{i,n,\bar{w}}$ and $[\mathbf{{z}}_{i}^{n,\bar{w}}]_{i}$
are bounded, $\limsup_{n\to\infty}t_{i,n}>0$, and so $\limsup_{n\to\infty}\Delta_{i,n}>0$.
We can check from the definitions that 
\[
\begin{array}{c}
F_{n+1,1}(\{\mathbf{{z}}_{\alpha}^{n+1,1}\}_{\alpha\in\bar{\mathcal{E}}\cup\mathcal{V}})-F_{n+1,0}(\{\mathbf{{z}}_{\alpha}^{n+1,0}\}_{\alpha\in\bar{\mathcal{E}}\cup\mathcal{V}})=\underset{i\in S_{n,\bar{w}}}{\sum}\Delta_{i,n}.\end{array}
\]
This means that the dual objective value can increase indefinitely,
which then implies that the problem \eqref{eq:distrib-dyk-primal}
is infeasible, which is a contradiction. 
\end{proof}
Proposition \ref{prop:control-growth} below shows some reasonable
conditions to guarantee \eqref{eq:sqrt-growth-sum-z}. The ideas of
its proof were already present in \cite{Pang_Dyk_spl,Pang_Dist_Dyk},
so we defer its proof to the appendix.
\begin{prop}
\label{prop:control-growth}(Growth of $\sum_{\alpha\in\bar{\mathcal{E}}\cup\mathcal{V}}\|\mathbf{{z}}_{\alpha}^{n,w}\|$)
In Algorithm \ref{alg:Ext-Dyk}, suppose:

\begin{enumerate}
\item There are only finitely many $S_{n,w}$ for which $S_{n,w}\cap[\mathcal{V}_{1}\cup\mathcal{V}_{2}]$
contains more than one element.
\item There are constants $M_{1}>0$ and $M_{2}>0$ such that the size of
the set 
\[
\big\{(n',w):n'\leq n,\,w\in\{1,\dots,\bar{w}\},\,|S_{n',w}\cap\mathcal{V}|>1\big\}
\]
is bounded by $M_{1}\sqrt{n}+M_{2}$ for all $n$. 
\end{enumerate}
Then condition (1) in Theorem \ref{thm:convergence} holds.
\end{prop}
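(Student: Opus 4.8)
The plan is to bound $\sum_{\alpha\in\bar{\mathcal{E}}\cup\mathcal{V}}\|\mathbf{z}_\alpha^{n,\bar{w}}\|$ by the splitting
\[
\sum_{i\in\mathcal{V}_3\cup\mathcal{V}_4}\|\mathbf{z}_i^{n,\bar{w}}\|+\sum_{i\in\mathcal{V}_1\cup\mathcal{V}_2}\|\mathbf{z}_i^{n,\bar{w}}\|+\sum_{(e,\bar{k})\in\bar{\mathcal{E}}}\|\mathbf{z}_{(e,\bar{k})}^{n,\bar{w}}\|
\]
and showing that each group grows no faster than $O(\sqrt{n})$. The first group is already $O(1)$ by Theorem \ref{thm:convergence}(iii), so the whole burden falls on the $\mathcal{V}_1\cup\mathcal{V}_2$ vertices and the edges. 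Throughout I would use the sparsity of Proposition \ref{prop:sparsity} (so $\mathbf{z}_i$ is supported on block $i$ and $\mathbf{z}_{(e,\bar{k})}$ on the two endpoints of $e$ in coordinate $\bar{k}$) and the boundedness of $\mathbf{v}_A^{n,w}$ from part (ii).

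The engine of the $\sqrt{n}$ rate is Theorem \ref{thm:convergence}(i): since $\sum_{n=1}^\infty\sum_{w=1}^{\bar{w}}\|\mathbf{v}_A^{n,w}-\mathbf{v}_A^{n,w-1}\|^2<\infty$ and $\mathbf{x}^{n,w}=\bar{\mathbf{x}}-\mathbf{v}_A^{n,w}$, Cauchy--Schwarz over the at most $\bar{w}n$ inner steps up to outer iteration $n$ yields $\sum_{(n',w')}\|\mathbf{v}_A^{n',w'}-\mathbf{v}_A^{n',w'-1}\|\le\sqrt{\bar{w}n}\,\big(\sum\|\cdot\|^2\big)^{1/2}=O(\sqrt{n})$. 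First I would telescope each dual variable into its per-step increments over the steps where it lies in $S_{n,w}$ (it is frozen otherwise, by line 14), so that $\|\mathbf{z}_\alpha^{n,\bar{w}}\|$ is bounded by an initial term plus $\sum\|\mathbf{z}_\alpha^{n',w'}-\mathbf{z}_\alpha^{n',w'-1}\|$, and then bound each increment against the step displacement. Using $\mathbf{v}_A^{n,w}-\mathbf{v}_A^{n,w-1}=\sum_{\alpha\in S_{n,w}}(\mathbf{z}_\alpha^{n,w}-\mathbf{z}_\alpha^{n,w-1})$, for a singleton block $S_{n,w}=\{\alpha\}$ one gets the exact identity $\|\mathbf{z}_\alpha^{n,w}-\mathbf{z}_\alpha^{n,w-1}\|=\|\mathbf{v}_A^{n,w}-\mathbf{v}_A^{n,w-1}\|$, so the aggregate of all singleton-step increments is $O(\sqrt{n})$.

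The remaining work is the non-singleton steps, which is exactly where conditions (1) and (2) enter. By condition (2) there are at most $M_1\sqrt{n}+M_2$ steps up to iteration $n$ with $|S_{n,w}\cap\mathcal{V}|>1$; for such a step carrying a single $\mathcal{V}_1\cup\mathcal{V}_2$ vertex $i$, sparsity localizes the increment of $\mathbf{z}_i$ to the $i$-th block of $\mathbf{v}_A^{n,w}-\mathbf{v}_A^{n,w-1}$ (up to the increments of edges incident to $i$ in the same block), which is bounded by $2\sqrt{C}$ via part (ii), so these steps contribute $O(\sqrt{n})$ in total. The genuinely dangerous steps are those coupling two or more indicator/proximable duals of $\mathcal{V}_1\cup\mathcal{V}_2$, whose normal-cone subgradients are not Lipschitz in the primal and can oscillate with large amplitude under a small net displacement; condition (1) guarantees only finitely many of these, so each occurs at a bounded iteration index and adds only a constant. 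For the edges I would invoke the reset in line 5 and Lemma \ref{lem:express-v-as-sum}: at the start of each outer iteration $\|\mathbf{z}_{(e,\bar{k})}^{n,0}\|\le C_1\|\mathbf{v}_H^{n,0}\|$, and since $\mathbf{v}_H^{n,0}=\mathbf{v}_A^{n-1,\bar{w}}-\sum_{i\in\mathcal{V}}\mathbf{z}_i^{n-1,\bar{w}}$ its norm is controlled by the already-established $O(\sqrt{n})$ vertex bound; the at most $\bar{w}$ within-iteration edge increments are then folded into the displacement bookkeeping above.

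The main obstacle is the increment estimate for a $\mathcal{V}_1\cup\mathcal{V}_2$ vertex that shares a block with edges incident to it, since there the step displacement is distributed over several overlapping sparse supports and an indicator dual can absorb a disproportionate share, so the clean singleton identity fails. This coupled case is precisely what forces conditions (1)--(2) and the edge reset, and its mechanics are inherited from \cite{Pang_Dyk_spl,Pang_Dist_Dyk}: one restricts the frequency of coupled updates through the two conditions and converts the coupled increments into a bounded multiple of the step displacement by means of the graph-dependent constant $C_1$ of Lemma \ref{lem:express-v-as-sum}. Assembling the three $O(\sqrt{n})$ estimates produces constants $A,B$ with $\sum_{\alpha\in\bar{\mathcal{E}}\cup\mathcal{V}}\|\mathbf{z}_\alpha^{n,\bar{w}}\|\le A\sqrt{n}+B$, which is condition (1) of Theorem \ref{thm:convergence}.
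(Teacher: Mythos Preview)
Your outline is largely on the right track---telescoping into per-step increments, using Cauchy--Schwarz on $\sum\|\mathbf{v}_A^{n',w}-\mathbf{v}_A^{n',w-1}\|$, invoking Theorem~\ref{thm:convergence}(iii) for $\mathcal{V}_3\cup\mathcal{V}_4$, and using the reset \eqref{eq_m:resetted-z-i-j} for the edge variables are all the ingredients the paper uses.

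The gap is in the case you yourself flag as ``the main obstacle'': a step $S_{n,w}$ containing a single $\mathcal{V}_1\cup\mathcal{V}_2$ vertex $i^*$ \emph{together with edges}. You write that conditions (1)--(2) and the edge reset are what dispose of this case, but that is not correct: conditions (1) and (2) restrict the frequency of steps with \emph{multiple vertices}, not steps where a lone vertex is coupled with edges. Such vertex-plus-edges steps can occur at every inner iteration, and your singleton identity $\|\mathbf{z}_\alpha^{n,w}-\mathbf{z}_\alpha^{n,w-1}\|=\|\mathbf{v}_A^{n,w}-\mathbf{v}_A^{n,w-1}\|$ simply fails there. The paper resolves this with a structural trick you do not mention: since each $\mathbf{z}_{(e,\bar{k})}\in H_{(e,\bar{k})}^\perp\subset D^\perp$, summing the block components $\sum_{i\in\mathcal{V}}[\mathbf{v}_A^{n,w}-\mathbf{v}_A^{n,w-1}]_i$ annihilates all edge increments and leaves exactly $[\mathbf{z}_{i^*}^{n,w}-\mathbf{z}_{i^*}^{n,w-1}]_{i^*}$ (Proposition~\ref{prop:sparsity}). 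This gives $\|\mathbf{z}_{i^*}^{n,w}-\mathbf{z}_{i^*}^{n,w-1}\|\le c_1^{-1}\|\mathbf{v}_A^{n,w}-\mathbf{v}_A^{n,w-1}\|$ regardless of how many edges share the step. Once the vertex increment is controlled, one backs out $\|\mathbf{v}_H^{n,w}-\mathbf{v}_H^{n,w-1}\|$ and then the individual edge increments via a linear-independence constant $\kappa$ (not $C_1$). Without this $D^\perp$ cancellation, your argument has no mechanism to bound the $\mathcal{V}_1\cup\mathcal{V}_2$ increment in the generic step, and the telescoping sum for those vertices is uncontrolled.
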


\subsection{\label{subsec:composition-lin-op}Composition with a linear operator }

Suppose some $f_{i}(\cdot)$ were defined as $f_{i,1}\circ A_{i}(\cdot)$,
where $f:Y\to\mathbb{R}$ is a closed convex function, $Y$ is another
finite dimensional Hilbert space and $A_{i}:\mathbb{R}^{m}\to Y$
is a linear map. One may either still try to take the proximal mapping
of $f_{i}(\cdot)$, but it may involve some expensive operations on
$A_{i}$. Alternatively, we can write or we can write $f_{i,1}\circ A_{i}(x_{i})$
as 
\[
f_{i}(y_{i})+\delta_{\{(x,y):A_{i}x=y\}}(x_{i},y_{i}),
\]
which splits into the sum of two functions. Note however that since
we require the problem to be strongly convex, creating the new variable
$y$ adds new regularizing terms to the objective function. 

\section{Conclusion}

The main contribution in this paper is to show that the distributed
Dykstra's algorithm can be extended to incorporate subdifferentiable
functions in a natural manner so that the algorithm converges to the
primal minimizer, even if there is no dual minimizer. A next question
is to find convergence rates of the algorithm. The derivation of such
rates uses rather different techniques from that of this paper, and
requires additional conditions to ensure the existence of a dual minimizer.
We defer this to \cite{Pang_rate_D_Dyk}, where we also perform numerical
experiments that show that the distributed Dykstra's algorithm is
sound. 

\appendix

\section{Further proofs}

In this appendix, we completing the parts of the proofs of Theorem
\ref{thm:convergence} and \ref{prop:control-growth} that we consider
to be too similar to the ones in \cite{Pang_Dist_Dyk}.

The following inequality describes the duality gap between \eqref{eq:Dyk-primal}
and \eqref{eq:dual-fn}. 
\begin{eqnarray}
 &  & \begin{array}{c}
\frac{1}{2}\|\bar{\mathbf{{x}}}-\mathbf{{x}}\|^{2}+\underset{\alpha\in\bar{\mathcal{E}}\cup\mathcal{V}}{\sum}\mathbf{f}_{\alpha}(\mathbf{{x}})-F(\{\mathbf{{z}}_{\alpha}\}_{\alpha\in\bar{\mathcal{E}}\cup\mathcal{V}})\end{array}\label{eq:From-8}\\
 & \overset{\eqref{eq:Dykstra-dual-defn}}{=} & \begin{array}{c}
\frac{1}{2}\|\bar{\mathbf{{x}}}-\mathbf{{x}}\|^{2}+\underset{\alpha\in\bar{\mathcal{E}}\cup\mathcal{V}}{\sum}[\mathbf{f}_{\alpha}(\mathbf{{x}})+\mathbf{f}_{\alpha}^{*}(\mathbf{{z}}_{\alpha})]-\left\langle \bar{\mathbf{{x}}},\underset{\alpha\in\bar{\mathcal{E}}\cup\mathcal{V}}{\sum}\mathbf{{z}}_{\alpha}\right\rangle +\frac{1}{2}\left\Vert \underset{\alpha\in\bar{\mathcal{E}}\cup\mathcal{V}}{\sum}\mathbf{{z}}_{\alpha}\right\Vert ^{2}\end{array}\nonumber \\
 & \overset{\scriptsize\mbox{Fenchel duality}}{\geq} & \begin{array}{c}
\frac{1}{2}\|\bar{\mathbf{{x}}}-\mathbf{{x}}\|^{2}+\left\langle \mathbf{{x}},\underset{\alpha\in\bar{\mathcal{E}}\cup\mathcal{V}}{\sum}\mathbf{{z}}_{\alpha}\right\rangle -\left\langle \bar{\mathbf{{x}}},\underset{\alpha\in\bar{\mathcal{E}}\cup\mathcal{V}}{\sum}\mathbf{{z}}_{\alpha}\right\rangle +\frac{1}{2}\left\Vert \underset{\alpha\in\bar{\mathcal{E}}\cup\mathcal{V}}{\sum}\mathbf{{z}}_{\alpha}\right\Vert ^{2}\end{array}\nonumber \\
 & = & \begin{array}{c}
\frac{1}{2}\left\Vert \bar{\mathbf{{x}}}-\mathbf{{x}}-\underset{\alpha\in\bar{\mathcal{E}}\cup\mathcal{V}}{\sum}\mathbf{{z}}_{\alpha}\right\Vert ^{2}\geq0.\end{array}\nonumber 
\end{eqnarray}

We continue with proving the rest of Theorem \ref{thm:convergence}.
\begin{proof}
[Proof of rest of Theorem \ref{thm:convergence}] We first show that
(i) to (vi) implies the final assertion. For all $n\in\mathbb{N}$
we have, from weak duality, 
\begin{equation}
\begin{array}{c}
F(\{\mathbf{{z}}_{\alpha}^{n,\bar{w}}\}_{\alpha\in\bar{\mathcal{E}}\cup\mathcal{V}})\leq\frac{1}{2}\|\bar{\mathbf{{x}}}-(\bar{\mathbf{{x}}}-\mathbf{{v}}_{A}^{*})\|^{2}+\underset{\alpha\in\bar{\mathcal{E}}\cup\mathcal{V}}{\overset{}{\sum}}\mathbf{f}_{\alpha}(\bar{\mathbf{{x}}}-\mathbf{{v}}_{A}^{*}).\end{array}\label{eq:weak-duality}
\end{equation}
Since the values $\{F_{n,w}(\{\mathbf{{z}}_{\alpha}^{n,\bar{w}}\}_{\alpha\in\bar{\mathcal{E}}\cup\mathcal{V}})\}_{n=1}^{\infty}$
are nondecreasing in $n$, we make use of (v) to get 
\[
\begin{array}{rcl}
\underset{n\to\infty}{\lim}F_{n,w}(\{\mathbf{{z}}_{\alpha}^{n,\bar{w}}\}_{\alpha\in\bar{\mathcal{E}}\cup\mathcal{V}}) & = & \frac{1}{2}\|\bar{\mathbf{{x}}}-(\bar{\mathbf{{x}}}-\mathbf{{v}}_{A}^{*})\|^{2}+\underset{\alpha\in\bar{\mathcal{E}}\cup\mathcal{V}}{\overset{}{\sum}}\mathbf{f}_{\alpha}(\bar{\mathbf{{x}}}-\mathbf{{v}}_{A}^{*}).\end{array}
\]
Since $F_{n,w}(\cdot)\leq F(\cdot)\leq\frac{1}{2}\|\bar{\mathbf{{x}}}-(\bar{\mathbf{{x}}}-\mathbf{{v}}_{A}^{*})\|^{2}+\sum_{\alpha\in\bar{\mathcal{E}}\cup\mathcal{V}}\mathbf{f}_{\alpha}(\bar{\mathbf{{x}}}-\mathbf{{v}}_{A}^{*})$,
we have \eqref{eq:thm-iv-concl}. Hence $\bar{\mathbf{{x}}}-\mathbf{{v}}_{A}^{*}=\arg\min_{\mathbf{{x}}}\mathbf{f}(\mathbf{{x}})+\frac{1}{2}\|\mathbf{{x}}-\bar{\mathbf{{x}}}\|^{2}$,
and (substituting $\mathbf{{x}}=\bar{\mathbf{{x}}}-\mathbf{{v}}_{A}^{*}$
in \eqref{eq:From-8}) 
\begin{eqnarray*}
 &  & \begin{array}{c}
\frac{1}{2}\|\bar{\mathbf{{x}}}-(\bar{\mathbf{{x}}}-\mathbf{{v}}_{A}^{*})\|^{2}+\mathbf{f}(\bar{\mathbf{{x}}}-\mathbf{{v}}_{A}^{*})-F(\{\mathbf{{z}}_{\alpha}^{n,\bar{w}}\}_{\alpha\in\mathcal{E}\cup\mathcal{V}})\end{array}\\
 & \overset{\eqref{eq:From-8},\eqref{eq:v-H-def},\eqref{eq:from-10}}{\geq} & \begin{array}{c}
\frac{1}{2}\|\bar{\mathbf{{x}}}-(\bar{\mathbf{{x}}}-\mathbf{{v}}_{A}^{*})-\mathbf{{v}}_{A}^{n,\bar{w}}\|^{2}\end{array}\\
 & \overset{\eqref{eq:x-from-v-A}}{=} & \begin{array}{c}
\frac{1}{2}\|\mathbf{{x}}^{n,\bar{w}}-(\bar{\mathbf{{x}}}-\mathbf{{v}}_{A}^{*})\|^{2}.\end{array}
\end{eqnarray*}
Hence $\lim_{n\to\infty}\mathbf{{x}}^{n,\bar{w}}$ is the minimizer
in (P). 

It remains to prove assertions (i) to (vi).

\textbf{Proof of (i):} We separate into two cases. 

We first consider the case when $S_{n,w}\not\subset\mathcal{V}_{4}$.
From the fact that $\{\mathbf{{z}}_{\alpha}^{n,w}\}_{\alpha\in S_{n,w}}$
minimizes  \eqref{eq:Dykstra-min-subpblm} (which includes the quadratic
regularizer) we have 
\begin{equation}
F_{n,w}(\{\mathbf{{z}}_{\alpha}^{n,w}\}_{\alpha\in\bar{\mathcal{E}}\cup\mathcal{V}})\overset{\eqref{eq:Dykstra-min-subpblm}}{\geq}\begin{array}{c}
F_{n,w-1}(\{\mathbf{{z}}_{\alpha}^{n,w-1}\}_{\alpha\in\bar{\mathcal{E}}\cup\mathcal{V}})+\frac{1}{2}\|\mathbf{{v}}_{A}^{n,w}-\mathbf{{v}}_{A}^{n,w-1}\|^{2}.\end{array}\label{eq:SHQP-decrease}
\end{equation}
(The last term in \eqref{eq:SHQP-decrease} arises from the quadratic
term in \eqref{eq:Dykstra-min-subpblm}.) By line 16 of Algorithm
\ref{alg:Ext-Dyk}, $\mathbf{{z}}_{i}^{n+1,0}=\mathbf{{z}}_{i}^{n,\bar{w}}$
for all $i\in\mathcal{V}$ and $\mathbf{{v}}_{H}^{n+1,0}=\mathbf{{v}}_{H}^{n,\bar{w}}$
(even though the decompositions \eqref{eq:reset-z-i-j-4} of $\mathbf{{v}}_{H}^{n+1,0}$
and $\mathbf{{v}}_{H}^{n,\bar{w}}$ may be different). 

In the second case when $S_{n,w}\subset\mathcal{V}_{4}$, Proposition
\ref{prop:quad-dec-case-2} and \eqref{eq_m:all_acronyms} show that
the inequality \eqref{eq:SHQP-decrease} holds.

Combining \eqref{eq:SHQP-decrease} over all $n'\in\{1,\dots,n\}$
and $w\in\{1,\dots,\bar{w}\}$, we have 
\[
\begin{array}{c}
F_{1,0}(\{\mathbf{{z}}_{\alpha}^{1,0}\}_{\alpha\in\bar{\mathcal{E}}\cup\mathcal{V}})+\underset{n'=1}{\overset{n}{\sum}}\underset{w=1}{\overset{\bar{w}}{\sum}}\|\mathbf{{v}}_{A}^{n',w}-\mathbf{{v}}_{A}^{n',w-1}\|^{2}\overset{\eqref{eq:SHQP-decrease}}{\leq}F_{n,\bar{w}}(\{\mathbf{{z}}_{\alpha}^{n,\bar{w}}\}_{\alpha\in\bar{\mathcal{E}}\cup\mathcal{V}}).\end{array}
\]
Next, $F_{n,\bar{w}}(\{\mathbf{{z}}_{\alpha}^{n,\bar{w}}\}_{\alpha\in\bar{\mathcal{E}}\cup\mathcal{V}})$
is bounded from above by weak duality. The proof of the claim is complete.

\textbf{Proof of (ii):} From part (i) and the fact that $F_{n,w}(\cdot)\leq F(\cdot)$,
we have 
\begin{equation}
-F_{1,0}(\{\mathbf{{z}}_{\alpha}^{1,0}\}_{\alpha\in\bar{\mathcal{E}}\cup\mathcal{V}})\overset{\scriptsize\mbox{part (i)}}{\geq}-F_{n,w}(\{\mathbf{{z}}_{\alpha}^{n,w}\}_{\alpha\in\bar{\mathcal{E}}\cup\mathcal{V}})\geq-F(\{\mathbf{{z}}_{\alpha}^{n,w}\}_{\alpha\in\bar{\mathcal{E}}\cup\mathcal{V}}).\label{eq:three-Fs}
\end{equation}
 Substituting $\mathbf{{x}}$ in \eqref{eq:From-8} to be the primal
minimizer $\mathbf{{x}}^{*}$ and $\{\mathbf{{z}}_{\alpha}\}_{\alpha\in\bar{\mathcal{E}}\cup\mathcal{V}}$
to be $\{\mathbf{{z}}_{\alpha}^{n,w}\}_{\alpha\in\bar{\mathcal{E}}\cup\mathcal{V}}$,
we have 
\begin{eqnarray*}
 &  & \begin{array}{c}
\frac{1}{2}\|\bar{\mathbf{{x}}}-\mathbf{{x}}^{*}\|^{2}+\underset{\alpha\in\bar{\mathcal{E}}\cup\mathcal{V}}{\overset{}{\sum}}\mathbf{f}_{\alpha}(\mathbf{{x}}^{*})-F_{1,0}(\{\mathbf{{z}}_{\alpha}^{1,0}\}_{\alpha\in\bar{\mathcal{E}}\cup\mathcal{V}})\end{array}\\
 & \overset{\eqref{eq:three-Fs}}{\geq} & \begin{array}{c}
\frac{1}{2}\|\bar{\mathbf{{x}}}-\mathbf{{x}}^{*}\|^{2}+\underset{\alpha\in\bar{\mathcal{E}}\cup\mathcal{V}}{\overset{}{\sum}}\mathbf{f}_{\alpha}(\mathbf{{x}}^{*})-F(\{\mathbf{{z}}_{\alpha}^{n,w}\}_{\alpha\in\bar{\mathcal{E}}\cup\mathcal{V}})\end{array}\\
 & \overset{\eqref{eq:From-8}}{\geq} & \begin{array}{c}
\frac{1}{2}\left\Vert \bar{\mathbf{{x}}}-\mathbf{{x}}^{*}-\underset{\alpha\in\bar{\mathcal{E}}\cup\mathcal{V}}{\overset{}{\sum}}\mathbf{{z}}_{\alpha}^{n,w}\right\Vert ^{2}\overset{\eqref{eq:from-10}}{=}\frac{1}{2}\|\bar{\mathbf{{x}}}-\mathbf{{x}}^{*}-\mathbf{{v}}_{A}^{n,w}\|^{2}.\end{array}
\end{eqnarray*}
The conclusion is immediate.

\textbf{Proof of (v): } We first make use of the technique in \cite[Lemma 29.1]{BauschkeCombettes11}
(which is in turn largely attributed to \cite{BD86}) to show that
\begin{equation}
\begin{array}{c}
\underset{n\to\infty}{\liminf}\left[\left(\underset{w=1}{\overset{\bar{w}}{\sum}}\|\mathbf{{v}}_{A}^{n,w}-\mathbf{{v}}_{A}^{n,w-1}\|\right)\sqrt{n}\right]=0.\end{array}\label{eq:root-n-dec}
\end{equation}
Seeking a contradiction, suppose instead that there is an $\epsilon>0$
and $\bar{n}>0$ such that if $n>\bar{n}$, then $\left(\sum_{w=1}^{\bar{w}}\|\mathbf{{v}}_{A}^{n,w}-\mathbf{{v}}_{A}^{n,w-1}\|\right)\sqrt{n}>\epsilon$.
By the Cauchy Schwarz inequality, we have $\begin{array}{c}
\frac{\epsilon^{2}}{n}<\left(\underset{w=1}{\overset{\bar{w}}{\sum}}\|\mathbf{{v}}_{A}^{n,w}-\mathbf{{v}}_{A}^{n,w-1}\|\right)^{2}\leq\bar{w}\underset{w=1}{\overset{\bar{w}}{\sum}}\|\mathbf{{v}}_{A}^{n,w}-\mathbf{{v}}_{A}^{n,w-1}\|^{2}.\end{array}$ This contradicts the earlier claim in (i) that $\sum_{n=1}^{\infty}\sum_{w=1}^{\bar{w}}\|\mathbf{{v}}_{A}^{n,w}-\mathbf{{v}}_{A}^{n,w-1}\|^{2}$
is finite. 

Through \eqref{eq:root-n-dec}, we find a sequence $\{n_{k}\}_{k=1}^{\infty}$
such that 
\begin{equation}
\begin{array}{c}
\lim_{k\to\infty}\left[\left(\underset{w=1}{\overset{\bar{w}}{\sum}}\|\mathbf{{v}}_{A}^{n_{k},w}-\mathbf{{v}}_{A}^{n_{k},w-1}\|\right)\sqrt{n_{k}}\right]=0.\end{array}\label{eq:subseq-sqrt-limit}
\end{equation}
Recalling the assumption \eqref{eq:sqrt-growth-sum-z}, we get 
\begin{equation}
\begin{array}{c}
\underset{k\to\infty}{\lim}\left[\left(\underset{w=1}{\overset{\bar{w}}{\sum}}\|\mathbf{{v}}_{A}^{n_{k},w}-\mathbf{{v}}_{A}^{n_{k},w-1}\|\right)\|\mathbf{{z}}_{\alpha}^{n_{k},\bar{w}}\|\right]\overset{\eqref{eq:sqrt-growth-sum-z},\eqref{eq:subseq-sqrt-limit}}{=}0\mbox{ for all }\alpha\in\bar{\mathcal{E}}\cup\mathcal{V}.\end{array}\label{eq:lim-sum-norm-z}
\end{equation}
 Moreover, 
\begin{eqnarray}
|\langle\mathbf{{v}}_{A}^{n_{k},\bar{w}}-\mathbf{{v}}_{A}^{n_{k},p(n_{k},\alpha)},\mathbf{{z}}_{\alpha}^{n_{k},\bar{w}}\rangle| & \leq & \begin{array}{c}
\|\mathbf{{v}}_{A}^{n_{k},\bar{w}}-\mathbf{{v}}_{A}^{n_{k},p(n_{k},\alpha)}\|\|\mathbf{{z}}_{\alpha}^{n_{k},\bar{w}}\|\end{array}\label{eq:inn-pdt-sum-norm}\\
 & \leq & \begin{array}{c}
\left(\underset{w=1}{\overset{\bar{w}}{\sum}}\|\mathbf{{v}}_{A}^{n_{k},w}-\mathbf{{v}}_{A}^{n_{k},w-1}\|\right)\|\mathbf{{z}}_{\alpha}^{n_{k},\bar{w}}\|.\end{array}\nonumber 
\end{eqnarray}
By (ii), there exists a further subsequence of $\{\mathbf{{v}}_{A}^{n_{k},\bar{w}}\}_{k=1}^{\infty}$
which converges to some $\mathbf{{v}}_{A}^{*}\in\mathbb{R}^{m}$.
Combining \eqref{eq:lim-sum-norm-z} and \eqref{eq:inn-pdt-sum-norm}
gives (v).

\textbf{Proof of (vi):}  From earlier results, we obtain 
\begin{eqnarray}
 &  & \begin{array}{c}
-\underset{\alpha\in\bar{\mathcal{E}}\cup\mathcal{V}}{\overset{}{\sum}}\mathbf{f}_{\alpha}(\bar{\mathbf{{z}}}-\mathbf{{v}}_{A}^{*})\end{array}\label{eq:biggest-formula}\\
 & \overset{\eqref{eq:From-8}}{\leq} & \begin{array}{c}
\frac{1}{2}\|\bar{\mathbf{{x}}}-(\bar{\mathbf{{x}}}-\mathbf{{v}}_{A}^{*})\|^{2}-F(\{\mathbf{{z}}_{\alpha}^{n_{k},\bar{w}}\}_{\alpha\in\bar{\mathcal{E}}\cup\mathcal{V}})\end{array}\nonumber \\
 & \overset{\eqref{eq:Dykstra-dual-defn},\eqref{eq:stagnant-indices}}{=} & \begin{array}{c}
\frac{1}{2}\|\mathbf{{v}}_{A}^{*}\|^{2}+\underset{\alpha\in\bar{\mathcal{E}}_{n_{k}}\cup\mathcal{V}}{\overset{}{\sum}}\mathbf{f}_{\alpha}^{*}(\mathbf{{z}}_{\alpha}^{n_{k},p(n_{k},\alpha)})\end{array}\nonumber \\
 &  & \begin{array}{c}
+\underset{((i,j),\bar{k})\notin\bar{\mathcal{E}}_{n_{k}}}{\overset{}{\sum}}\mathbf{f}_{((i,j),\bar{k})}^{*}(\mathbf{{z}}_{((i,j),\bar{k})}^{n_{k},\bar{w}})-\langle\bar{\mathbf{{x}}},\mathbf{{v}}_{A}^{n_{k},\bar{w}}\rangle+\frac{1}{2}\|\mathbf{{v}}_{A}^{n_{k},\bar{w}}\|^{2}\end{array}\nonumber \\
 & \overset{\scriptsize\eqref{eq:error-deriv},\eqref{eq:zero-indices}}{=} & \begin{array}{c}
\frac{1}{2}\|\mathbf{{v}}_{A}^{*}\|^{2}+\underset{\alpha\in\bar{\mathcal{E}}_{n_{k}}\cup\mathcal{V}}{\overset{}{\sum}}\langle\bar{\mathbf{{x}}}-\mathbf{{v}}_{A}^{n_{k},p(n_{k},\alpha)},\mathbf{{z}}_{\alpha}^{n_{k},p(n_{k},\alpha)}\rangle+\underset{i\in\mathcal{V}_{4}}{\overset{}{\sum}}E_{i,n_{k}}-\underset{i\in\mathcal{V}_{4}}{\overset{}{\sum}}D_{i,n_{k}}\end{array}\nonumber \\
 &  & \begin{array}{c}
-\underset{\alpha\in\bar{\mathcal{E}}_{n_{k}}\cup\mathcal{V}}{\overset{}{\sum}}\mathbf{f}_{\alpha}(\bar{\mathbf{{x}}}-\mathbf{{v}}_{A}^{n_{k},p(n_{k},\alpha)})-\langle\bar{\mathbf{{x}}},\mathbf{{v}}_{A}^{n_{k},\bar{w}}\rangle+\frac{1}{2}\|\mathbf{{v}}_{A}^{n_{k},\bar{w}}\|^{2}\end{array}\nonumber \\
 & \overset{\eqref{eq:stagnant-indices}}{=} & \begin{array}{c}
\frac{1}{2}\|\mathbf{{v}}_{A}^{*}\|^{2}-\underset{\alpha\in\bar{\mathcal{E}}_{n_{k}}\cup\mathcal{V}}{\overset{}{\sum}}\langle\mathbf{{v}}_{A}^{n_{k},p(n_{k},\alpha)}-\mathbf{{v}}_{A}^{n_{k},\bar{w}},\mathbf{{z}}_{\alpha}^{n_{k},\bar{w}}\rangle\end{array}\nonumber \\
 &  & \begin{array}{c}
-\underset{\alpha\in\bar{\mathcal{E}}_{n_{k}}\cup\mathcal{V}}{\overset{}{\sum}}\mathbf{f}_{\alpha}(\bar{\mathbf{{x}}}-\mathbf{{v}}_{A}^{n_{k},p(n_{k},\alpha)})-\langle\bar{\mathbf{{x}}},\mathbf{{v}}_{A}^{n_{k},\bar{w}}\rangle+\underset{i\in\mathcal{V}_{4}}{\overset{}{\sum}}E_{i,n_{k}}-\underset{i\in\mathcal{V}_{4}}{\overset{}{\sum}}D_{i,n_{k}}\end{array}\nonumber \\
 &  & \begin{array}{c}
+\left\langle \bar{\mathbf{{x}}}-\mathbf{{v}}_{A}^{n_{k},\bar{w}},\underset{\alpha\in\bar{\mathcal{E}}_{n_{k}}\cup\mathcal{V}}{\overset{}{\sum}}\mathbf{{z}}_{\alpha}^{n_{k},p(n_{k},\alpha)}\right\rangle +\frac{1}{2}\|\mathbf{{v}}_{A}^{n_{k},\bar{w}}\|^{2}\end{array}\nonumber \\
 & \overset{\eqref{eq:from-10},\eqref{eq:zero-indices}}{=} & \begin{array}{c}
\frac{1}{2}\|\mathbf{{v}}_{A}^{*}\|^{2}-\frac{1}{2}\|\mathbf{{v}}_{A}^{n_{k},\bar{w}}\|^{2}-\underset{\alpha\in\bar{\mathcal{E}}_{n_{k}}\cup\mathcal{V}}{\overset{}{\sum}}\langle\mathbf{{v}}_{A}^{n_{k},p(n_{k},\alpha)}-\mathbf{{v}}_{A}^{n_{k},\bar{w}},\mathbf{{z}}_{\alpha}^{n_{k},\bar{w}}\rangle\end{array}\nonumber \\
 &  & \begin{array}{c}
-\underset{\alpha\in\bar{\mathcal{E}}_{n_{k}}\cup\mathcal{V}}{\overset{}{\sum}}\mathbf{f}_{\alpha}(\bar{\mathbf{{x}}}-\mathbf{{v}}_{A}^{n_{k},p(n_{k},\alpha)})+\underset{i\in\mathcal{V}_{4}}{\overset{}{\sum}}E_{i,n_{k}}-\underset{i\in\mathcal{V}_{4}}{\overset{}{\sum}}D_{i,n_{k}}.\end{array}\nonumber 
\end{eqnarray}

Since $\lim_{k\to\infty}\mathbf{{v}}_{A}^{n_{k},\bar{w}}=\mathbf{{v}}_{A}^{*}$,
we have $\lim_{k\to\infty}\frac{1}{2}\|\mathbf{{v}}_{A}^{*}\|^{2}-\frac{1}{2}\|\mathbf{{v}}_{A}^{n_{k},\bar{w}}\|^{2}=0$.
The third term in the last group of formulas (i.e., the sum involving
the inner products) converges to 0 by (v). The term $\lim_{k\to\infty}\sum_{i\in\mathcal{V}_{4}}E_{i,n_{k}}$
equals to 0 by (iii). 

Next, recall that if $((i,j),\bar{k})\in\bar{\mathcal{E}}_{n_{k}}$,
by \eqref{eq:primal-subpblm}, we have $\bar{\mathbf{{x}}}-\mathbf{{v}}_{A}^{n_{k},p(n_{k},((i,j),\bar{k}))}\in H_{((i,j),\bar{k})}$.
Note that from Claim \ref{claim:Fenchel-duality}(b), we have $\bar{\mathbf{{x}}}-\mathbf{{v}}_{A}^{n,p(n,((i,j),\bar{k}))}\in H_{((i,j),\bar{k})}$
for all $((i,j),\bar{k})\in\bar{\mathcal{E}}_{n}$. There is a constant
$\kappa_{\bar{\mathcal{E}}_{n_{k}}}>0$ such that 
\begin{eqnarray}
 &  & d(\bar{\mathbf{{x}}}-\mathbf{{v}}_{A}^{n_{k},\bar{w}},\cap_{((i,j),\bar{k})\in\bar{\mathcal{E}}}H_{((i,j),\bar{k})})\label{eq:reg-argument}\\
 & \overset{\scriptsize{\bar{\mathcal{E}}_{n_{k}}\mbox{ connects }\mathcal{V},\mbox{ Prop \ref{prop:E-connects-V}(1)}}}{=} & d(\bar{\mathbf{{x}}}-\mathbf{{v}}_{A}^{n_{k},\bar{w}},\cap_{((i,j),\bar{k})\in\bar{\mathcal{E}}_{n_{k}}}H_{((i,j),\bar{k})})\nonumber \\
 & \leq & \kappa_{\bar{\mathcal{E}}_{n_{k}}}\max_{((i,j),\bar{k})\in\bar{\mathcal{E}}_{n_{k}}}d(\bar{\mathbf{{x}}}-\mathbf{{v}}_{A}^{n_{k},\bar{w}},H_{((i,j),\bar{k})})\nonumber \\
 & \overset{\bar{\mathbf{{x}}}-\mathbf{{v}}_{A}^{n_{k},p(n_{k},((i,j),\bar{k}))}\in H_{((i,j),\bar{k})}}{\leq} & \kappa_{\bar{\mathcal{E}}_{n_{k}}}\max_{((i,j),\bar{k})\in\bar{\mathcal{E}}_{n_{k}}}\|\mathbf{{v}}_{A}^{n_{k},\bar{w}}-\mathbf{{v}}_{A}^{n_{k},p(n_{k},((i,j),\bar{k}))}\|.\nonumber 
\end{eqnarray}
Let $\kappa:=\max\{\kappa_{\mathcal{\bar{E}}'}:\bar{\mathcal{E}}'\mbox{ connects }\mathcal{V}\}$.
We have $\kappa_{\bar{\mathcal{E}}_{n_{k}}}\leq\kappa$. Taking limits
of \eqref{eq:reg-argument}, the RHS converges to zero by (i), so
$d(\bar{\mathbf{{x}}}-\mathbf{{v}}_{A}^{*},\cap_{((i,j),\bar{k})\in\bar{\mathcal{E}}}H_{((i,j),\bar{k})})=0$,
or $\bar{\mathbf{{x}}}-\mathbf{{\mathbf{v}}}_{A}^{*}\in\cap_{((i,j),\bar{k})\in\bar{\mathcal{E}}}H_{((i,j),\bar{k})}$.
So $\sum_{((i,j),\bar{k})\in\bar{\mathcal{E}}}\mathbf{f}_{((i,j),\bar{k})}(\bar{\mathbf{{x}}}-\mathbf{{v}}_{A}^{*})=0$.
Together with the fact that $\bar{\mathbf{{x}}}-\mathbf{{v}}_{A}^{n_{k},p(n_{k},((i,j),\bar{k}))}\in H_{((i,j),\bar{k})}$,
we have 
\begin{equation}
\sum_{((i,j),\bar{k})\in\bar{\mathcal{E}}_{n_{k}}}\mathbf{f}_{((i,j),\bar{k})}(\bar{\mathbf{{x}}}-\mathbf{{v}}_{A}^{n_{k},p(n_{k},((i,j),\bar{k}))})=0=\underset{((i,j),\bar{k})\in\bar{\mathcal{E}}}{\overset{}{\sum}}\mathbf{f}_{((i,j),\bar{k})}(\bar{\mathbf{{x}}}-\mathbf{{v}}_{A}^{*}).\label{eq:all-indicator-edges-zero}
\end{equation}

Lastly, by the lower semicontinuity of $\mathbf{f}_{i}(\cdot)$, we
have 
\begin{equation}
\begin{array}{c}
-\underset{k\to\infty}{\lim}\underset{i\in\mathcal{V}}{\sum}\mathbf{f}_{i}(\bar{\mathbf{{x}}}-\mathbf{{v}}_{A}^{n_{k},p(n_{k},i)})\leq-\underset{i\in\mathcal{V}}{\overset{\phantom{\mathcal{V}}}{\sum}}\mathbf{f}_{i}(\bar{\mathbf{{x}}}-\mathbf{{v}}_{A}^{*}).\end{array}\label{eq:lsc-argument}
\end{equation}
As mentioned after \eqref{eq:biggest-formula}, taking the limits
as $k\to\infty$ would result in the first three terms and the 5th
term of the last formula in \eqref{eq:biggest-formula} to be zero.
Hence 
\begin{eqnarray*}
 &  & \begin{array}{c}
-\underset{\alpha\in\bar{\mathcal{E}}\cup\mathcal{V}}{\sum}\mathbf{f}_{\alpha}(\bar{\mathbf{{x}}}-\mathbf{{v}}_{A}^{*})\end{array}\\
 & \overset{\eqref{eq:biggest-formula}}{\leq} & \begin{array}{c}
\underset{k\to\infty}{\lim}-\underset{\alpha\in\bar{\mathcal{E}}_{n_{k}}\cup\mathcal{V}}{\sum}\mathbf{f}_{\alpha}(\bar{\mathbf{{x}}}-\mathbf{{v}}_{A}^{n_{k},p(n_{k},\alpha)})-\underset{k\to\infty}{\lim}\underset{i\in\mathcal{V}_{4}}{\sum}D_{i,n_{k}}\end{array}\\
 & \overset{\eqref{eq:all-indicator-edges-zero},\eqref{eq:lsc-argument},\eqref{eq:D-error-def}}{\leq} & \begin{array}{c}
-\underset{\alpha\in\mathcal{\bar{E}}\cup\mathcal{V}}{\sum}\mathbf{f}_{\alpha}(\bar{\mathbf{{x}}}-\mathbf{{v}}_{A}^{*}).\end{array}
\end{eqnarray*}
So \eqref{eq:biggest-formula} becomes an equation in the limit, and
$\lim_{n_{k}\to\infty}D_{i,n_{k}}=0$ for all $i\in\mathcal{V}_{4}$.
The first two lines of \eqref{eq:biggest-formula} then gives
\[
\begin{array}{c}
\underset{k\to\infty}{\lim}F(\{\mathbf{{z}}_{\alpha}^{n_{k},\bar{w}}\}_{\alpha\in\bar{\mathcal{E}}\cup\mathcal{V}})=\frac{1}{2}\|\mathbf{{v}}_{A}^{*}\|^{2}+\underset{i\in\mathcal{V}}{\sum}\mathbf{f}_{i}(\bar{\mathbf{{x}}}-\mathbf{{v}}_{A}^{*}),\end{array}
\]
which shows that $\bar{\mathbf{{x}}}-\mathbf{{v}}_{A}^{*}$ is the
primal minimizer. Recall the definitions of $F_{n,w}(\cdot)$, $F(\cdot)$
and $D_{i,n}$ in \eqref{eq:Dykstra-dual-defn}, \eqref{eq:Dykstra-dual-defn-1}
and \eqref{eq:D-error-def}. We recall \eqref{eq:stagnant-indices}.
Also, from line 11 of Algorithm \ref{alg:Ext-Dyk}, we have $\mathbf{f}_{\alpha,n,w}(\cdot)=\mathbf{f}_{\alpha,n,p(n,\alpha)}(\cdot)$.
This gives $F_{n_{k},\bar{w}}(\{\mathbf{{z}}_{\alpha}^{n_{k},\bar{w}}\}_{\alpha\in\bar{\mathcal{E}}\cup\mathcal{V}})+\sum_{i\in\mathcal{V}_{4}}D_{i,n_{k}}=F(\{\mathbf{{z}}_{\alpha}^{n_{k},\bar{w}}\}_{\alpha\in\bar{\mathcal{E}}\cup\mathcal{V}})$,
from which we deduce the equation on the left of \eqref{eq:thm-iv-concl}
as well. 
\end{proof}

\begin{proof}
[Proof of Proposition \ref{prop:control-growth}]Since this result
is used only in the proof of Theorem \ref{thm:convergence}(v), we
can make use of Theorem \ref{thm:convergence}(i) and (iii) in its
proof. To address condition (1), we can assume that $S_{n,w}\cap[\mathcal{V}_{1}\cup\mathcal{V}_{2}]$
always contains at most one element. Define the sets $\bar{S}_{n,1}$
and $\bar{S}_{n,2}\subset\{1,2,\dots\}\times\{1,\dots,\bar{w}\}$
as 
\begin{eqnarray*}
\bar{S}_{n,1} & := & \{(n',w):n'\leq n,\,w\in\{1,\dots,\bar{w}\},\,|S_{n',w}\cap\mathcal{V}|\leq1\}\\
\bar{S}_{n,2} & := & \{(n',w):n'\leq n,\,w\in\{1,\dots,\bar{w}\},\,|S_{n',w}\cap\mathcal{V}|>1\}.
\end{eqnarray*}
Either $S_{n',w}\cap[\mathcal{V}_{1}\cup\mathcal{V}_{2}]=\emptyset$
or $|S_{n',w}\cap[\mathcal{V}_{1}\cup\mathcal{V}_{2}]|=1$. In the
second case, let $i^{*}$ be the index such that $i^{*}\in S_{n',w}\cap[\mathcal{V}_{1}\cup\mathcal{V}_{2}]$.
Otherwise, in the first case, we let $i^{*}$ be any index in $[\mathcal{V}_{1}\cup\mathcal{V}_{2}]$.
We prove claims based on whether $(n',w)$ lies in $\bar{S}_{n,1}$
or $\bar{S}_{n,2}$. 

Without loss of generality, we can assume that $S_{n',w}\cap\bar{\mathcal{E}}$
are linearly independent constraints. This also means that for a $\mathbf{{v}}_{H}^{n',w}-\mathbf{{v}}_{H}^{n',w-1}$,
each $\mathbf{{z}}_{((i,j),\bar{k})}^{n',w}-\mathbf{{z}}_{((i,j),\bar{k})}^{n',w-1}$
can be determined uniquely with a linear map from the relation
\[
\sum_{\alpha\in\bar{\mathcal{E}}}[\mathbf{{z}}_{\alpha}^{n',w}-\mathbf{{z}}_{\alpha}^{n',w-1}]\overset{\eqref{eq:v-H-def}}{=}\mathbf{{v}}_{H}^{n',w}-\mathbf{{v}}_{H}^{n',w-1}.
\]
 Therefore for all $\alpha\in S_{n',w}\cap\bar{\mathcal{E}}$, there
is a constant $\kappa_{\alpha,S_{n',w}\cap\bar{\mathcal{E}}}>0$ such
that 
\begin{equation}
\|\mathbf{{z}}_{\alpha}^{n',w}-\mathbf{{z}}_{\alpha}^{n',w-1}\|\leq\kappa_{\alpha,S_{n',w}\cap\bar{\mathcal{E}}}\|\mathbf{{v}}_{H}^{n',w}-\mathbf{{v}}_{H}^{n',w-1}\|.\label{eq:basic-bdd-z-i-j}
\end{equation}
Thus there is a constant $\kappa>0$ such that 
\begin{equation}
\sum_{\alpha\in\bar{\mathcal{E}}}\|\mathbf{{z}}_{\alpha}^{n',w}-\mathbf{{z}}_{\alpha}^{n',w-1}\|\overset{\scriptsize{\mbox{Alg \ref{alg:Ext-Dyk} line 14}}}{=}\sum_{\alpha\in S_{n,w}\cap\bar{\mathcal{E}}}\|\mathbf{{z}}_{\alpha}^{n',w}-\mathbf{{z}}_{\alpha}^{n',w-1}\|\overset{\eqref{eq:basic-bdd-z-i-j}}{\leq}\kappa\|\mathbf{{v}}_{H}^{n',w}-\mathbf{{v}}_{H}^{n',w-1}\|.\label{eq:bdd-z-i-j}
\end{equation}

\textbf{\uline{Claim 1: If \mbox{$(n',w)\in\bar{S}_{n,1}$}, then
there is a constant \mbox{$C_{2}>1$} such that }}
\begin{equation}
\begin{array}{c}
\|\mathbf{{v}}_{H}^{n',w}-\mathbf{{v}}_{H}^{n',w-1}\|+\underset{\alpha\in\bar{\mathcal{E}}}{\sum}\|\mathbf{{z}}_{\alpha}^{n',w}-\mathbf{{z}}_{\alpha}^{n',w-1}\|+\underset{i\in\mathcal{V}}{\overset{\phantom{i\in\mathcal{V}}}{\sum}}\|\mathbf{{z}}_{i}^{n',w}-\mathbf{{z}}_{i}^{n',w-1}\|\leq C_{2}\|\mathbf{{v}}_{A}^{n',w}-\mathbf{{v}}_{A}^{n',w-1}\|.\end{array}\label{eq:all-3-bdd}
\end{equation}

We have 
\begin{eqnarray}
\begin{array}{c}
\underset{i\in\mathcal{V}}{\sum}[\mathbf{{v}}_{A}^{n',w}-\mathbf{{v}}_{A}^{n',w-1}]_{i}\end{array} & \overset{\scriptsize{\eqref{eq_m:all_acronyms}}}{=} & \begin{array}{c}
\underset{i\in\mathcal{V}}{\sum}\underset{\alpha\in S_{n',w}}{\sum}[\mathbf{{z}}_{\alpha}^{n',w}-\mathbf{{z}}_{\alpha}^{n',w-1}]_{i}\end{array}\nonumber \\
 & \overset{\mathbf{{z}}_{((i,j),\bar{k})}\in D^{\perp},\eqref{eq:D-and-D-perp}}{=} & \begin{array}{c}
\underset{i\in\mathcal{V}}{\sum}[\mathbf{{z}}_{i^{*}}^{n',w}-\mathbf{{z}}_{i^{*}}^{n',w-1}]_{i}\end{array}\nonumber \\
 & \overset{\scriptsize{\mbox{Prop \ref{prop:sparsity}}}}{=} & \begin{array}{c}
[\mathbf{{z}}_{i^{*}}^{n',w}-\mathbf{{z}}_{i^{*}}^{n',w-1}]_{i^{*}}.\end{array}\label{eq:for-norm-rate}
\end{eqnarray}
Recall that the norm $\|\cdot\|$ always refers to the $2$-norm unless
stated otherwise. By the equivalence of norms in finite dimensions,
we can find a constant $c_{1}$ such that 
\begin{eqnarray}
\begin{array}{c}
\|\mathbf{{v}}_{A}^{n',w}-\mathbf{{v}}_{A}^{n',w-1}\|\end{array} & \geq & \begin{array}{c}
c_{1}\underset{i\in\mathcal{V}}{\sum}\|[\mathbf{{v}}_{A}^{n',w}-\mathbf{{v}}_{A}^{n',w-1}]_{i}\|\end{array}\label{eq:bdd-z-i}\\
 & \geq & \begin{array}{c}
c_{1}\Big\|\underset{i\in\mathcal{V}}{\sum}[\mathbf{{v}}_{A}^{n',w}-\mathbf{{v}}_{A}^{n',w-1}]_{i}\Big\|\end{array}\nonumber \\
 & \overset{\eqref{eq:for-norm-rate}}{=} & \begin{array}{c}
c_{1}\|\mathbf{{z}}_{i^{*}}^{n,w}-\mathbf{{z}}_{i^{*}}^{n,w-1}\|\end{array}\nonumber \\
 & \overset{\scriptsize{\mbox{Alg \ref{alg:Ext-Dyk} line 14}}}{=} & \begin{array}{c}
c_{1}\underset{i\in\mathcal{V}}{\sum}\|\mathbf{{z}}_{i}^{n,w}-\mathbf{{z}}_{i}^{n,w-1}\|.\end{array}\nonumber 
\end{eqnarray}
Next, $\mathbf{{v}}_{H}^{n',w}-\mathbf{{v}}_{H}^{n',w-1}\overset{\eqref{eq:from-10}}{=}\mathbf{{v}}_{A}^{n',w}-\mathbf{{v}}_{A}^{n',w-1}-(\mathbf{{z}}_{i^{*}}^{n',w}-\mathbf{{z}}_{i^{*}}^{n',w-1})$,
so 
\begin{eqnarray}
\begin{array}{c}
\|\mathbf{{v}}_{H}^{n',w}-\mathbf{{v}}_{H}^{n',w-1}\|\end{array} & \leq & \begin{array}{c}
\|\mathbf{{v}}_{A}^{n',w}-\mathbf{{v}}_{A}^{n',w-1}\|+\|\mathbf{{z}}_{i^{*}}^{n',w}-\mathbf{{z}}_{i^{*}}^{n',w-1}\|\end{array}\label{eq:bdd-v-H}\\
 & \overset{\eqref{eq:bdd-z-i}}{\leq} & \begin{array}{c}
\left(1+\frac{1}{c_{1}}\right)\|\mathbf{{v}}_{A}^{n',w}-\mathbf{{v}}_{A}^{n',w-1}\|.\end{array}\nonumber 
\end{eqnarray}
We can choose $\{\mathbf{{z}}_{\alpha}^{n,w}\}_{\alpha\in\bar{\mathcal{E}}}$
such that 
\begin{equation}
\sum_{\alpha\in S_{n',w}\cap\bar{\mathcal{E}}}[\mathbf{{z}}_{\alpha}^{n',w}-\mathbf{{z}}_{\alpha}^{n',w-1}]\overset{\scriptsize{\mbox{Alg \ref{alg:Ext-Dyk} line 14}}}{=}\sum_{\alpha\in\bar{\mathcal{E}}}[\mathbf{{z}}_{\alpha}^{n',w}-\mathbf{{z}}_{\alpha}^{n',w-1}]\overset{\eqref{eq:v-H-def}}{=}\mathbf{{v}}_{H}^{n',w}-\mathbf{{v}}_{H}^{n',w-1}.\label{eq:decomp-v-H}
\end{equation}
Combining \eqref{eq:bdd-z-i}, \eqref{eq:bdd-v-H} and \eqref{eq:bdd-z-i-j}
together shows that there is a constant $C_{2}>1$ such that \eqref{eq:all-3-bdd}
holds.$\hfill\triangle$

\textbf{\uline{Claim 2. If \mbox{$(n',w)\in\bar{S}_{n,2}$}, then
there is a constant \mbox{$C_{5}>0$} such that}} 
\begin{equation}
\begin{array}{c}
\|\mathbf{{v}}_{H}^{n',w}-\mathbf{{v}}_{H}^{n',w-1}\|+\underset{\alpha\in\bar{\mathcal{E}}}{\sum}\|\mathbf{{z}}_{\alpha}^{n',w}-\mathbf{{z}}_{\alpha}^{n',w-1}\|+\underset{i\in\mathcal{V}}{\overset{\phantom{i\in\mathcal{V}}}{\sum}}\|\mathbf{{z}}_{i}^{n',w}-\mathbf{{z}}_{i}^{n',w-1}\|\leq C_{5}.\end{array}\label{eq:all-3-bdd-2}
\end{equation}
We have

\begin{eqnarray}
 &  & \begin{array}{c}
\|\mathbf{{v}}_{A}^{n',w}-\mathbf{{v}}_{A}^{n',w-1}\|\end{array}\label{eq:to-bdd-i-star-terms}\\
 & \geq & \begin{array}{c}
c_{1}\Big\|\underset{i\in\mathcal{V}}{\sum}[\mathbf{{v}}_{A}^{n',w}-\mathbf{{v}}_{A}^{n',w-1}]_{i}\Big\|\end{array}\nonumber \\
 & \overset{\eqref{eq_m:all_acronyms},\mathbf{{z}}_{((i,j),\bar{k})}\in D^{\perp},\eqref{eq:D-and-D-perp}}{=} & \begin{array}{c}
c_{1}\Big\|\underset{i\in\mathcal{V}}{\sum}\underset{j\in\mathcal{V}}{\sum}[\mathbf{{z}}_{j}^{n',w}-\mathbf{{z}}_{j}^{n',w-1}]_{i}\Big\|\end{array}\nonumber \\
 & \overset{\scriptsize{\mbox{Prop. }\ref{prop:sparsity}}}{=} & \begin{array}{c}
c_{1}\Big\|\underset{i\in\mathcal{V}}{\sum}[\mathbf{{z}}_{i}^{n',w}-\mathbf{{z}}_{i}^{n',w-1}]_{i}\Big\|\end{array}\nonumber \\
 & = & \begin{array}{c}
c_{1}\Big\|[\mathbf{{z}}_{i^{*}}^{n',w}-\mathbf{{z}}_{i^{*}}^{n',w-1}]_{i^{*}}+\underset{i\in\mathcal{V}_{3}\cup\mathcal{V}_{4}}{\sum}[\mathbf{{z}}_{i}^{n',w}-\mathbf{{z}}_{i}^{n',w-1}]_{i}\Big\|.\end{array}\nonumber 
\end{eqnarray}
From Theorem \ref{thm:convergence}(i), there is a constant $C_{3}>0$
such that $\|\mathbf{{v}}_{A}^{n',w}-\mathbf{{v}}_{A}^{n',w-1}\|\leq C_{3}$.
By Theorem \ref{thm:convergence}(iii), there is a constant $C_{4}>0$
such that 
\begin{equation}
\|\mathbf{{z}}_{i}^{n',w}-\mathbf{{z}}_{i}^{n',w}\|\leq C_{4}\mbox{ for all }i\in\mathcal{V}_{3}\cup\mathcal{V}_{4}.\label{eq:final-i-bdd}
\end{equation}
So $\|\mathbf{{z}}_{i^{*}}^{n',w}-\mathbf{{z}}_{i^{*}}^{n',w-1}\|=\|[\mathbf{{z}}_{i^{*}}^{n',w}-\mathbf{{z}}_{i^{*}}^{n',w-1}]_{i^{*}}\|\overset{\eqref{eq:to-bdd-i-star-terms},\eqref{eq:final-i-bdd}}{\leq}(|\mathcal{V}_{3}|+|\mathcal{V}_{4}|)C_{4}+\frac{1}{c_{1}}C_{3}$,
and 
\begin{equation}
\begin{array}{c}
\underset{i\in\mathcal{V}}{\sum}\|\mathbf{{z}}_{i}^{n',w}-\mathbf{{z}}_{i}^{n',w-1}\|\overset{\eqref{eq:to-bdd-i-star-terms},\eqref{eq:final-i-bdd}}{\leq}2(|\mathcal{V}_{3}|+|\mathcal{V}_{4}|)C_{4}+\frac{1}{c_{1}}C_{3}.\end{array}\label{eq:final-star-bdd}
\end{equation}
 Next, from \eqref{eq_m:all_acronyms}, we have 
\begin{eqnarray}
\begin{array}{c}
\mathbf{{v}}_{A}^{n',w}-\mathbf{{v}}_{A}^{n',w-1}\end{array} & = & \begin{array}{c}
\mathbf{{v}}_{H}^{n',w}-\mathbf{{v}}_{H}^{n',w-1}+[\mathbf{{z}}_{i^{*}}^{n',w}-\mathbf{{z}}_{i^{*}}^{n',w-1}]+\underset{i\in\mathcal{V}_{3}\cup\mathcal{V}_{4}}{\sum}[\mathbf{{z}}_{i}^{n',w}-\mathbf{{z}}_{i}^{n',w-1}]\end{array}\nonumber \\
\begin{array}{c}
\|\mathbf{{v}}_{H}^{n',w}-\mathbf{{v}}_{H}^{n',w-1}\|\end{array} & \leq & \begin{array}{c}
\|\mathbf{{v}}_{A}^{n',w}-\mathbf{{v}}_{A}^{n',w-1}\|+\|\mathbf{{z}}_{i^{*}}^{n',w}-\mathbf{{z}}_{i^{*}}^{n',w-1}\|+\underset{i\in\mathcal{V}_{3}\cup\mathcal{V}_{4}}{\sum}\|\mathbf{{z}}_{i}^{n',w}-\mathbf{{z}}_{i}^{n',w-1}\|\end{array}\nonumber \\
 & \leq & \begin{array}{c}
C_{3}+2(|\mathcal{V}_{3}|+|\mathcal{V}_{4}|)C_{4}+\frac{1}{c_{1}}C_{3}.\end{array}\label{eq:final-H-bdd}
\end{eqnarray}
Combining \eqref{eq:bdd-z-i-j}, \eqref{eq:final-star-bdd} and \eqref{eq:final-H-bdd},
we can show that Claim 2 holds. $\hfill\triangle$

Since $\{\mathbf{{z}}_{\alpha}^{n,0}\}_{\alpha\in\bar{\mathcal{E}}}$
was chosen to satisfy \eqref{eq_m:resetted-z-i-j}, there is some
$M>1$ such that 
\begin{equation}
\begin{array}{c}
\underset{\alpha\in\bar{\mathcal{E}}}{\sum}\|\mathbf{{z}}_{\alpha}^{n,0}\|\overset{\eqref{eq:reset-z-i-j-3}}{\leq}M\|\mathbf{{v}}_{H}^{n,0}\|\overset{\eqref{eq:reset-z-i-j-4}}{\leq}M\left(\|\mathbf{{v}}_{H}^{1,0}\|+\underset{n'=1}{\overset{n-1}{\sum}}\underset{w=1}{\overset{\bar{w}}{\sum}}\|\mathbf{{v}}_{H}^{n',w}-\mathbf{{v}}_{H}^{n',w-1}\|\right)\end{array}\label{eq:z-bdd-for-E}
\end{equation}
Now for any $n\geq1$, we have 
\begin{eqnarray}
\sum_{\alpha\in\bar{\mathcal{E}}\cup\mathcal{V}}\|\mathbf{{z}}_{\alpha}^{n,\bar{w}}\| & \leq & \sum_{\alpha\in\bar{\mathcal{E}}}\|\mathbf{{z}}_{\alpha}^{n,0}\|+\sum_{w=1}^{\bar{w}}\sum_{\alpha\in\bar{\mathcal{E}}}\|\mathbf{{z}}_{\alpha}^{n,w}-\mathbf{{z}}_{\alpha}^{n,w-1}\|\label{eq:2nd-big-ineq}\\
 &  & +\sum_{n'=1}^{n}\sum_{w=1}^{\bar{w}}\sum_{\alpha\in\mathcal{V}}\|\mathbf{{z}}_{\alpha}^{n',w}-\mathbf{{z}}_{\alpha}^{n',w-1}\|+\sum_{\alpha\in\mathcal{V}}\|\mathbf{{z}}_{\alpha}^{1,0}\|\nonumber \\
 & \overset{\eqref{eq:z-bdd-for-E}}{\leq} & M\|\mathbf{{v}}_{H}^{1,0}\|+\sum_{\alpha\in\mathcal{V}}\|\mathbf{{z}}_{\alpha}^{1,0}\|+\sum_{w=1}^{\bar{w}}\left(\sum_{\alpha\in\bar{\mathcal{E}}}\|\mathbf{{z}}_{\alpha}^{n',w}-\mathbf{{z}}_{\alpha}^{n',w-1}\|\right)\nonumber \\
 &  & +\sum_{n'=1}^{n-1}\sum_{w=1}^{\bar{w}}\left(M\|\mathbf{{v}}_{H}^{n',w}-\mathbf{{v}}_{H}^{n',w-1}\|+\sum_{\alpha\in\mathcal{V}}\|\mathbf{{z}}_{\alpha}^{n',w}-\mathbf{{z}}_{\alpha}^{n',w-1}\|\right)\nonumber \\
 & \overset{\eqref{eq:all-3-bdd},\eqref{eq:all-3-bdd-2}}{\leq} & M\|\mathbf{{v}}_{H}^{1,0}\|+\sum_{\alpha\in\mathcal{V}}\|\mathbf{{z}}_{\alpha}^{1,0}\|+MC_{2}\sum_{m=1}^{n}\sum_{w=1}^{\bar{w}}\|\mathbf{{v}}_{A}^{n',w}-\mathbf{{v}}_{A}^{n',w-1}\|\nonumber \\
 &  & +MC_{5}\left(M_{1}\sqrt{n}+M_{2}\right).\nonumber 
\end{eqnarray}
By the Cauchy Schwarz inequality, we have 
\begin{equation}
\sum_{n'=1}^{n}\sum_{w=1}^{\bar{w}}\|\mathbf{{v}}_{A}^{n',w}-\mathbf{{v}}_{A}^{n',w-1}\|\leq\sqrt{n\bar{w}}\sqrt{\sum_{n'=1}^{n}\sum_{w=1}^{\bar{w}}\|\mathbf{{v}}_{A}^{n',w}-\mathbf{{v}}_{A}^{n',w-1}\|^{2}}.\label{eq:sum-bdd-by-sqrt-n}
\end{equation}
Since the second square root of the right hand side of \eqref{eq:sum-bdd-by-sqrt-n}
is bounded by Theorem \ref{thm:convergence}(i), we make use of \eqref{eq:2nd-big-ineq}
to obtain the conclusion \eqref{eq:sqrt-growth-sum-z} as needed. 
\end{proof}
\bibliographystyle{amsalpha}
\bibliography{../refs}

\end{document}